\documentclass[12pt]{amsart}

\usepackage[english]{babel}
\usepackage[utf8]{inputenc}
\usepackage{times}
\usepackage[T1]{fontenc}
\usepackage{graphicx}
\usepackage{amscd,amsmath,amsfonts,amstext,amssymb,amsbsy,amsopn,amsthm,eucal}
\usepackage{txfonts}
\usepackage[figuresright]{rotating}
\usepackage{rotating,booktabs}
\usepackage{caption}
\usepackage{multirow}
\usepackage{tensor}
\usepackage[table]{xcolor}
\usepackage{fancyhdr}
\usepackage{color} 
\usepackage{hyperref}

\hypersetup{
  pdftitle={Warped product},
  pdfauthor={CLZ},
  pdfsubject={WP},
  pdfkeywords={IMCF, Penrose inequality, mass, RN-AdS}
  pdfpagelayout=SinglePage,
  pdfpagemode=UseOutlines,  pdfpagemode=UseOutlines,
  colorlinks,
  linkcolor=[rgb]{0,0,0.7},
  urlcolor=[rgb]{0,0,0.4},
  citecolor=[rgb]{0.4,0.1,0}
  }



\newcommand{\NE}{N_\epsilon}

\newcommand{\ps}[2]{\langle{#1},{#2}\rangle}



\newcommand{\n}{\nabla}
\newcommand{\bn}{\bar{\nabla}}

\newcommand{\hn}{\hat{\nabla}}

\newcommand{\barg}{\bar{g}}

\newcommand{\tg}{\tilde{g}}
\newcommand{\hg}{\hat{g}}
\newcommand{\hs}{\hat{\sigma}}

\def\pt{\partial}

\newcommand{\ld}{\lambda}



\newcommand{\RN}{\overline{R}}

\newcommand{\RicN}{Ric_{\barg}}

\DeclareMathOperator{\Ric}{Ric}

\def\dfrac{\displaystyle\frac}

\newtheorem{prop}{Proposition}[section]
\newtheorem{thm}[prop]{Theorem}

\newtheorem{lem}[prop]{Lemma}
\newtheorem{rem}[prop]{Remark}

\newtheorem{defn}[prop]{Definition}

\numberwithin{equation}{section}

\frenchspacing

\textwidth=17cm
\textheight=23cm
\parindent=16pt
\oddsidemargin=-0.5cm
\evensidemargin=-0.5cm
\topmargin=-0.5cm

\begin{document}


\baselineskip=17pt


\title[A Penrose type inequaltiy for graphs]{A Penrose type inequaltiy for graphs over Reissner-Nordstr\"om-anti-deSitter manifold }

\author[Daguang Chen, Haizhong Li, Tailong Zhou]{Daguang Chen, Haizhong Li, Tailong Zhou}

\thanks{*The first author was supported by  NSFC grant No.11471180 and the second author and the third author were supported by NSFC grant No.11671224.}

\subjclass[2010]{{53C44}, {53C42}}
\keywords{Inverse mean curvature flow; Reissner-Nordstr\"om-anti-deSitter manifold; Penrose inequality; ALH mass}

\maketitle


\begin{abstract}
	In this paper,  we use the inverse mean curvature flow to  establish  an optimal Minkowski type inquality,  weighted Alexandrov-Fenchel inequality for the mean convex star shaped hypersurfaces in Reissner-Nordstr\"om-anti-deSitter manifold and   Penrose type inequality for asymptotically locally hyperbolic manifolds in  which can be realized as graphs 	over Reissner-Nordstr\"om-anti-deSitter manifold.
\end{abstract}

\section{Introduction}

The famous positive mass conjecture in general relativity  states:
any asymptotically flat Riemannian manifold with a suitable decay order
and with nonnegative scalar curvature has the nonnegative ADM mass. Moreover, equality holds if and only if the manifold is isometric to the Euclidean space  with the standard metric.
The positive mass theorem first proved by Schoen and Yau \cite{SY} in 1979 using minimal surface techniques and then by Witten \cite{W} in 1981 using spinors. Recently, Schoen and Yau \cite{SY2017} proved the positive mass theorem in all
dimension.

The Penrose inequality in general relativity as refinement of the positive mass theorem states that the total mass of a spacetime is no less than the mass of its black holes. In the asymptotically
flat case, which corresponds to a vanishing cosmological constant,
the Riemannian Penrose inequality reads that
\begin{equation}\label{Penrose-ADM}
m_{ADM} \geq \frac 12\left(\frac{|\Sigma|}{\omega_{n-1}}\right)^{\frac{n-2}{n-1}},
\end{equation}
where $m_{ADM}$ is the ADM mass of the asymptotically flat Riemannian manifold with horizon and $|\Sigma|$ is the area of  $\Sigma$. The Riemannian Penrose inequality \eqref{Penrose-ADM} has been
established by Huisken–Ilmanen \cite{HI} by using inverse mean curvature flow    for a connected horizon and Bray \cite{Bray01} by using conformal flow for an arbitrary
horizon in dimension 3. Later, Bray’s approach was generalized to any dimension $n\leq 7$, as proven by Bray and Lee in \cite{BL}. For related results, see  the excellent surveys \cite{Bray} and \cite{Mars}.
Lam \cite{Lam} proved \eqref{Penrose-ADM} in all dimensions for an   asymptotically flat manifold which is a graph over $\mathbb{R}^n$. Mirandola and Vit\'orio \cite{MV} generalized Lam’s result to arbitrary codimension graph
with flat normal bundle. Recently, Ge et al. \cite{GWW1} introduced a new mass, which they called
Gauss-Bonnet-Chern mass, and they proved Penrose type inequalities in this case. Wei, Xiong and the second author \cite{LWX} obtained Penrose-type inequality of the second Gauss–Bonnet–Chern mass  for the graphic manifold with flat normal bundle.

In recent years, extending the previous results to a spacetime with a negative cosmological constant
attracts many authors’ interest. In the time symmetric case, $(M^n,g)$ is now an asymptotically  hyperbolic  manifold with an outermost minimal boundary $\Sigma$.
The notion of mass for this class of manifolds was first defined mathematically by
Wang \cite{Wang} and Chru\'sciel and Herzlich \cite{CH}.
For the  asymptotically hyperbolic manifolds, Chru\'sciel and Herzlich  in \cite{CH} introduced a mass-like invariant, which generalizes the ADM mass. See also \cite{CN,Herzlich1,Herzlich,M, ZhangX}. For  the   mass in  asymptotically hyperbolic manifolds, the corresponding Penrose conjecture states
\begin{equation}\label{Penrose-H}
m^{\mathbb H}\geq
\frac{1}{2}\left[
\left( \frac{|\Sigma|}{\omega_{n-1}} \right)^{\frac{n-2}{n-1}}
+ \left(\frac{|\Sigma|}{\omega_{n-1}}\right)^{\frac{n}{n-1}}
\right],
\end{equation}
if $R_g+n(n-1)\geq 0$, where $R_g$ is the scalar curvature of $g$.
A recent result by Neves \cite{Neves} shows that
it is not possible to prove \eqref{Penrose-H} only adapting Huisken and Ilmanen’s inverse mean curvature flow metheod in general.
In dimension $3$, Lee and Neves \cite{LN} were able to use the
inverse mean curvature flow to establish a Penrose type inequality for asymptotically locally
hyperbolic manifolds in the so-called negative mass range.
Dahl-Gicquaud-Sakovich \cite{DGS} and de Lima-Gir\~ao \cite{dLG3} proved  \eqref{Penrose-H}  for asymptotically hyperbolic graphs over the hyperoblic space $\mathbb {H}^n$ with the help of of the weighted hyperbolic Alexandrov-Fenchel inequality
\begin{equation} \label{AF-deLG}
\int_{\Sigma}  \cosh r\, H d\mu \geq (n-1)  \omega_{n-1}\left[\left(\frac{|\Sigma|}{\omega_{n-1}}\right)^{\frac{n-2}{n-1}}+\left(\frac{|\Sigma|}{\omega_{n-1}}\right)^{\frac{n}{n-1}}\right],
\end{equation}
provied $\Sigma$ is star-shaped and strictly mean-convex (i.e. $H>0$). It was proved by de Lima and Gir\~ao in \cite{dLG3}. The sharp Alexandrov–Fenchel-type
inequality  has an independent interest.  Recently, there have  been considerable progress in establishing
Alexandrov–Fenchel type inequalities, see for instance \cite{BHW,GWW3,GWWX,LWX,WX} and the references therein.


In this paper, we will consider the  Penrose inequality for asymptotically locally hyperbolic
graphic manifold over  Reissner-Nordtr\"{o}m-anti-deSitter (called "Reissner-Nordstr\"{o}m-AdS" for short) manifold. Firstly, we briefly recall the definition of Reissner-Nordstr\"{o}m-AdS manifold.
We fix three positive
numbers $m,q$ and $\kappa$, where $q<m, \kappa<<\infty$ such that the equation  $\epsilon+\kappa^2s^2-2ms^{2-n}+q^2s^{4-2n}=0$ has positive solutions, and let $s_0$ be the larger one.  Let $(N^{n-1}_{\epsilon}, \hg)$ be a closed space form of constant sectional curvature $\epsilon=0, \pm 1$ and $f=\sqrt{\epsilon+\kappa^2 s^2-2ms^{2-n}+q^2s^{4-2n}}$. The Reissner-Nordstr\"{o}m-anti-deSitter  manifold is a warped product manifold  $P=(s_0,\infty)\times N_{\epsilon}$ with the metric
$
\barg=\dfrac{1}{f^2}ds^2 + s^2 \hg,
$ where $\hg$ is the standard metric on $\NE$. The hypersurface $\partial P=\{s_{0}\}\times \NE$ is referred to  as the horizon.
Here we must remark that
the Reissner-Nordstr\"om-AdS manifold is referred to the manifold $P=(s_0,\infty)\times N_{\epsilon}$ with $\epsilon=1$ in \cite{WangZH}. In this case, Z.H. Wang \cite{WangZH} obtained a Minkowski type inequality by using the inverse mean curvature flow.
Motivated by \cite{WangZH}, we obtain

\begin{thm}\label{MainThm-Minkow}
Let $\Sigma$ be a compact mean convex, star-shaped and embedded hypersurface in Reissner-Nordstr\"{o}m-AdS manifold $P$ and let $\Omega$ denote the enclosed region by $\Sigma$ and the horizon $\partial P=\{s_{0}\}\times \NE$, then we have
	\begin{equation}\label{Minkow0}
	\begin{aligned}
	\int_{\Sigma} f H d\mu-n(n-1)\kappa^2 \int_{\Omega} f d v\geq&(n-1)\epsilon\vartheta_{n-1}
	\left(\left(\frac{|\Sigma|}{\vartheta_{n-1}} \right)^{\frac{n-2}{n-1}}-\left(\frac{|\partial P|}{\vartheta_{n-1}} \right)^{\frac{n-2}{n-1}}\right)\\
	&+(n-1)q^2\vartheta_{n-1}\left( \left(\frac{|\Sigma|}{\vartheta_{n-1}} \right)^{-\frac{n-2}{n-1}}-\left(\frac{|\partial P|}{\vartheta_{n-1}} \right)^{-\frac{n-2}{n-1}}\right),
	\end{aligned}	
	\end{equation}
	where $\vartheta_{n-1}=|\NE|$ and  $|\partial P|$ is the area of horizon $\{s_0\}\times \NE$. Equality in \eqref{Minkow0} holds if and only if $\Sigma$ is a slice $\{s\}\times \NE$ for $s\in [s_0,\infty)$.
\end{thm}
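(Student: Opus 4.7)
\medskip

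\textbf{Proof plan for Theorem~\ref{MainThm-Minkow}.}

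The plan is to apply the inverse mean curvature flow (IMCF) $\pt_t X=\nu/H$ starting from $\Sigma$ and to identify a functional that is monotone non-increasing along the flow and whose asymptotic value reproduces the right-hand side of \eqref{Minkow0}. Let $\{\Sigma_t\}_{t\ge 0}$ be the flow with $\Sigma_0=\Sigma$, and $\Omega_t$ the region bounded by $\partial P$ and $\Sigma_t$. First I would invoke the Gerhardt--Urbas long-time existence theory adapted to the Reissner--Nordstr\"om--AdS warped product in \cite{WangZH}: the flow exists smoothly for all $t\ge 0$, stays star-shaped and strictly mean-convex, satisfies $|\Sigma_t|=e^{t}|\Sigma_0|$, and becomes asymptotically round, with $\Sigma_t$ approaching a slice $\{s\}\times\NE$ as $t\to\infty$.

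Next, introduce
\begin{equation*}
\begin{aligned}
Q(t)\,:=\,&\int_{\Sigma_t}fH\,d\mu-n(n-1)\kappa^2\int_{\Omega_t}f\,dv\\
&-(n-1)\epsilon\,\vartheta_{n-1}\!\left(\F{|\Sigma_t|}{\vartheta_{n-1}}\right)^{\!\F{n-2}{n-1}}-(n-1)q^2\vartheta_{n-1}\!\left(\F{|\Sigma_t|}{\vartheta_{n-1}}\right)^{\!-\F{n-2}{n-1}}.
\end{aligned}
\end{equation*}
On a coordinate slice $\{s\}\times\NE$ one has $H=(n-1)f/s$, $|\Sigma|=s^{n-1}\vartheta_{n-1}$, $\int_\Omega f\,dv=(s^n-s_0^n)\vartheta_{n-1}/n$, and the horizon relation $2m=\epsilon s_0^{n-2}+\kappa^2 s_0^n+q^2 s_0^{2-n}$ (which follows from $f(s_0)=0$). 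A direct computation then shows that $Q$ takes the $s$-independent value $-(n-1)\vartheta_{n-1}\bigl(\epsilon s_0^{n-2}+q^2 s_0^{2-n}\bigr)$; written in terms of $|\partial P|=s_0^{n-1}\vartheta_{n-1}$ this is exactly the $\partial P$ contribution on the right of \eqref{Minkow0}. Hence \eqref{Minkow0} is equivalent to $Q(0)\ge\lim_{t\to\infty}Q(t)$.

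The core of the argument is the monotonicity $\F{d}{dt}Q(t)\le 0$. I combine the standard IMCF evolution identities $\pt_t d\mu=d\mu$, $\pt_t H=\Delta_\Sigma(1/H)-(|A|^2+\RicN(\nu,\nu))/H$, $\pt_t f=\nablaN_\nu f/H$, $\F{d}{dt}\int_{\Omega_t}f\,dv=\int_{\Sigma_t}(f/H)\,d\mu$, with the integration-by-parts $\int_{\Sigma_t}f\Delta_\Sigma(1/H)\,d\mu=\int_{\Sigma_t}(\Delta_\Sigma f)/H\,d\mu$, and the warped product Ricci formulas $\RicN(\pt_r,\pt_r)=-(n-1)ff'/s$, $\RicN(e_i,e_i)=\bigl((n-2)\epsilon-sff'-(n-2)f^2\bigr)/s^{2}$ together with the explicit form of $f$. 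After splitting $|A|^2=(|A|^2-H^2/(n-1))+H^2/(n-1)$, the derivative rewrites as
\begin{equation*}
\F{d}{dt}Q(t)=-\int_{\Sigma_t}\F{f}{H}\!\Bigl(|A|^2-\tfrac{H^2}{n-1}\Bigr)d\mu-\int_{\Sigma_t}\F{f}{H}\,\Psi(s,\theta)\,d\mu,
\end{equation*}
where $\theta$ is the angle between $\nu$ and $\pt_r$ and $\Psi$ collects the remaining contributions from $\RicN(\nu,\nu)+n(n-1)\kappa^2$, from $\Delta_\Sigma f$, from $\nablaN_\nu f$, and from the derivative of the area-type terms in $Q$. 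The key algebraic step is to show $\Psi\ge 0$: the $m$-contributions cancel as in the Schwarzschild-AdS case, while the new $q^2$-correction in the radial Ricci is absorbed precisely by the derivative of the $q^2(|\Sigma_t|/\vartheta_{n-1})^{-(n-2)/(n-1)}$ term in $Q$, leaving $\Psi$ proportional to $\sin^2\theta$ times a non-negative function of $s$. Combined with Newton's inequality $(n-1)|A|^2\ge H^2$ this yields $\F{d}{dt}Q\le 0$.

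This gives $Q(0)\ge\lim_{t\to\infty}Q(t)$, and the limit equals the slice value from the second step, establishing \eqref{Minkow0}. In the equality case, $\F{d}{dt}Q\equiv 0$ forces both integrands to vanish pointwise: umbilicity $(n-1)|A|^2=H^2$ together with $\sin\theta\equiv 0$ force each $\Sigma_t$, and in particular $\Sigma$, to be a coordinate slice. The main obstacle lies in the monotonicity step — verifying the precise matching of $\epsilon$-, $m$- and $q^2$-contributions between $\F{d}{dt}G(t)$, $\RicN(\nu,\nu)+n(n-1)\kappa^2$, $\Delta_\Sigma f$ and $\nablaN_\nu f$ so that $\Psi\ge 0$ — an analysis significantly more intricate than the Schwarzschild-AdS case ($q=0$) treated in \cite{WangZH}, and in which the horizon constraint $f(s_0)=0$ plays a crucial role.
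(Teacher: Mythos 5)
Your overall strategy — run the inverse mean curvature flow, exhibit a functional that is monotone and whose large-time behaviour reproduces the right-hand side of \eqref{Minkow0} — is the same framework the paper uses, and your computation of the slice value of $Q$ (via the horizon relation $\epsilon + \kappa^2 s_0^2 - 2ms_0^{2-n}+q^2s_0^{4-2n}=0$) is correct. However, there is a genuine gap in the monotonicity step.

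The paper does \emph{not} prove that your unnormalized functional $Q(t)$ is non-increasing; it proves monotonicity for
\begin{equation*}
\mathcal{Q}(t)\;=\;|\Sigma_t|^{-\frac{n-2}{n-1}}\,\bigl[\,Q(t)+(n-1)\vartheta_{n-1}\bigl(\epsilon s_0^{n-2}+q^2 s_0^{2-n}\bigr)\,\bigr],
\end{equation*}
i.e.\ after dividing by $|\Sigma_t|^{(n-2)/(n-1)}$. Concretely, after using the sub-static equation for $f$, the Cauchy--Schwarz bound $|A|^2\ge H^2/(n-1)$, Brendle's Heintze--Karcher inequality, and the H\"older bound $\int_{N_\epsilon}s_t^{2-n}d\mu_{N_\epsilon}\ge\vartheta_{n-1}(|\Sigma_t|/\vartheta_{n-1})^{-(n-2)/(n-1)}$, what one actually obtains is the differential inequality
\begin{equation*}
\frac{d}{dt}\,Q(t)\;\le\;\frac{n-2}{n-1}\,\bigl[\,Q(t)+(n-1)\vartheta_{n-1}\bigl(\epsilon s_0^{n-2}+q^2s_0^{2-n}\bigr)\,\bigr],
\end{equation*}
not $\dot Q\le 0$. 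The extra term $\frac{n-2}{n-1}\int_{\Sigma_t}fH\,d\mu$ coming from the area growth $\partial_t d\mu=d\mu$ does not cancel against the Ricci/Hessian contributions; it is precisely absorbed by the derivative of the normalizing factor $|\Sigma_t|^{-(n-2)/(n-1)}$. Since $B(t):=Q(t)+(n-1)\vartheta_{n-1}(\epsilon s_0^{n-2}+q^2 s_0^{2-n})$ need not be $\le 0$ on a generic initial surface, the differential inequality above permits $\dot Q>0$, so your decomposition with $\Psi\ge 0$ and ``$\dot Q\le 0$'' cannot be right as stated. Correspondingly, the asymptotic claim $\lim_{t\to\infty}Q(t)=$ slice value is also unjustified (and, without the normalization, $Q(t)$ has no reason to converge at all); the correct statement, requiring a separate asymptotic analysis including a Beckner--Sobolev type inequality when $\epsilon=1$ and H\"older when $\epsilon=-1$, is $\liminf_{t\to\infty}\mathcal{Q}(t)\ge 0$. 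Finally, note that your listed ingredients (Ricci formulas for $\RicN$, angle $\theta$ decomposition, pointwise cancellation of $m$- and $q^2$-terms) are not the mechanism actually used: the monotonicity hinges on the sub-static inequality $(\bar\Delta f)\bar g-\bar\nabla^2 f+f\RicN\ge 0$, the Heintze--Karcher inequality of Brendle, and H\"older — none of which appear in your plan — rather than on a pointwise sign analysis of a Ricci-curvature combination $\Psi$.
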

\begin{rem}
	For $\epsilon=1$ in Theorem \ref{MainThm-Minkow}, the inequality \eqref{Minkow0} is reduced to Theorem 1 in \cite{WangZH}. When $q=0$ and $\epsilon=1$, Theorem \ref{MainThm-Minkow} reduces to the Minkowski inequality proved in \cite{BHW}; when $q=0$ and $\epsilon=0,-1$,  Theorem \ref{MainThm-Minkow} was proved in \cite{GWWX}.
\end{rem}

\begin{thm}\label{AF-ineq}
	Let $\Sigma$ be a compact embedded hypersurface which is star-shaped with positive mean curvature in $P$, then we have
	\begin{equation}\label{AF-ineq1}
	\begin{aligned}
	\int_{\Sigma} f H d\mu\geq&
	(n-1)\kappa^2\vartheta_{n-1} \left(\left(\frac{|\Sigma|}{\vartheta_{n-1}} \right)^{\frac{n}{n-1}}-\left(\frac{|\partial P|}{\vartheta_{n-1}} \right)^{\frac{n}{n-1}} \right)\\
	&+(n-1)\epsilon\vartheta_{n-1}
	\left(\left(\frac{|\Sigma|}{\vartheta_{n-1}} \right)^{\frac{n-2}{n-1}}-\left(\frac{|\partial P|}{\vartheta_{n-1}} \right)^{\frac{n-2}{n-1}}\right)\\
	&+(n-1)q^2\vartheta_{n-1}\left( \left(\frac{|\Sigma|}{\vartheta_{n-1}} \right)^{-\frac{n-2}{n-1}}-\left(\frac{|\partial P|}{\vartheta_{n-1}} \right)^{-\frac{n-2}{n-1}}\right),
	\end{aligned}	
	\end{equation}
	where $\partial P=\{s_{0}\}\times N.$ Equality holds if and only if $\Sigma$ is a geodesic slice.
\end{thm}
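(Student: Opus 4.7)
The plan is to prove Theorem \ref{AF-ineq} by an inverse mean curvature flow (IMCF) monotonicity of an Alexandrov--Fenchel type functional, in the same spirit as the proof of Theorem \ref{MainThm-Minkow}. Let $\{\Sigma_t\}_{t\geq 0}$ solve IMCF in $P$ with $\Sigma_0=\Sigma$; under the hypotheses the flow exists smoothly for all time, $\Sigma_t$ remains star-shaped and strictly mean convex, $|\Sigma_t|=|\Sigma|\,e^{t}$, and $\Sigma_t$ becomes asymptotic (after rescaling) to a coordinate slice as $t\to\infty$. Introduce
\[
Q(t):=\int_{\Sigma_t}\!fH\,d\mu-(n-1)\vartheta_{n-1}\!\left[\kappa^{2}\!\left(\tfrac{|\Sigma_t|}{\vartheta_{n-1}}\right)^{\!\frac{n}{n-1}}\!+\epsilon\!\left(\tfrac{|\Sigma_t|}{\vartheta_{n-1}}\right)^{\!\frac{n-2}{n-1}}\!+q^{2}\!\left(\tfrac{|\Sigma_t|}{\vartheta_{n-1}}\right)^{\!-\frac{n-2}{n-1}}\right].
\]
On any slice $\{s\}\times\NE$ we have $fH=(n-1)f^{2}/s$ and $|\Sigma|=s^{n-1}\vartheta_{n-1}$; combined with $2m=\kappa^{2}s_{0}^{n}+\epsilon s_{0}^{n-2}+q^{2}s_{0}^{-(n-2)}$ (from $f(s_{0})=0$), a short calculation gives $Q\equiv-2m(n-1)\vartheta_{n-1}$ on every slice, so $Q(t)\to-2m(n-1)\vartheta_{n-1}$ as $t\to\infty$.

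The heart of the argument is the monotonicity $\tfrac{dQ}{dt}\leq 0$. Differentiating along IMCF using $\partial_t d\mu=d\mu$, $\partial_t H=-\Delta(\tfrac{1}{H})-(|A|^{2}+\Ric(\nu,\nu))/H$, $\partial_t f=\langle\bn f,\nu\rangle/H$, integrating by parts to replace $\int f\,\Delta(\tfrac{1}{H})\,d\mu$ by $\int(\Delta_{\Sigma}f)/H\,d\mu$ and using the Gauss-type relation $\Delta_{\Sigma}f=\bar\Delta f-\bn^{2}f(\nu,\nu)-H\langle\bn f,\nu\rangle$, yields
\[
\tfrac{d}{dt}\!\!\int_{\Sigma_t}\!fH\,d\mu=2\!\int\!\langle\bn f,\nu\rangle\,d\mu-\!\int\!\tfrac{\bar\Delta f-\bn^{2}f(\nu,\nu)+f\Ric(\nu,\nu)}{H}d\mu-\!\int\!\tfrac{f|A|^{2}}{H}d\mu+\!\int\!fH\,d\mu.
\]
The key new ingredient, specific to the Reissner--Nordstr\"om--AdS potential, is the pointwise identity
\[
\bar\Delta f-\bn^{2}f(\nu,\nu)+f\Ric(\nu,\nu)\;=\;\frac{(n-1)(n-2)q^{2}}{f\,s^{2n-2}}\,|\nabla^{\Sigma}s|^{2}\;\geq\;0,
\]
obtained by direct computation of $\bn^{2}f$ and $\Ric$ in the warped product, in which the $m$-contributions cancel identically and only the electrically charged $q^{2}$-correction survives (thus reducing to the static equation in the AdS--Schwarzschild case $q=0$). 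Combining this with the algebraic inequality $|A|^{2}\geq H^{2}/(n-1)$, the divergence identity $\int_{\Sigma_t}\langle\bn f,\nu\rangle\,d\mu=\int_{\Omega_t}\bar\Delta f\,dv$ (horizon boundary contribution vanishes because $f(s_{0})=0$), the explicit time derivatives of the three polynomial terms subtracted in $Q$, and, if needed as an intermediate bound, Theorem \ref{MainThm-Minkow} applied at time $t$, one obtains $\tfrac{dQ}{dt}\leq 0$.

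Monotonicity and the limit at $t=\infty$ combine to give $Q(0)\geq-2m(n-1)\vartheta_{n-1}$. Rewriting this constant as $-(n-1)\vartheta_{n-1}\bigl[\kappa^{2}(|\partial P|/\vartheta_{n-1})^{n/(n-1)}+\epsilon(|\partial P|/\vartheta_{n-1})^{(n-2)/(n-1)}+q^{2}(|\partial P|/\vartheta_{n-1})^{-(n-2)/(n-1)}\bigr]$, using $|\partial P|/\vartheta_{n-1}=s_{0}^{n-1}$ together with the horizon identity $f(s_0)=0$, reproduces exactly \eqref{AF-ineq1}. The equality case is extracted by retracing the estimates: $\tfrac{dQ}{dt}\equiv 0$ forces both saturation of $|A|^{2}=H^{2}/(n-1)$ (so each $\Sigma_t$ is totally umbilical) and $|\nabla^{\Sigma}s|\equiv 0$ (so $s$ is constant on each $\Sigma_t$); together with star-shapedness these force $\Sigma$ to be a coordinate slice. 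The main obstacle is the monotonicity computation, and in particular the above pointwise identity: its derivation requires careful combinatorial bookkeeping of the $q^{2}$-contributions to $\bn^{2}f(\nu,\nu)$ and to $f\Ric(\nu,\nu)$ with their different radial versus tangential weights, together with the exact cancellation of the $m$-terms. This cancellation, and the resulting nonnegative charged correction, are the novel inputs compared to the AdS--Schwarzschild case.
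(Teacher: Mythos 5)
Your overall strategy (IMCF monotonicity plus limit at $t=\infty$) is the right idea, and your rewriting of the substatic quantity $\bar\Delta f-\bar\nabla^2f(\nu,\nu)+f\Ric_{\bar g}(\nu,\nu)\geq 0$ is just the static equation \eqref{Static-Equ} from the paper contracted with $\nu\otimes\nu$, so that part is fine. But the central step is not established: the quantity
\[
Q(t)=\int_{\Sigma_t}fH\,d\mu-(n-1)\vartheta_{n-1}\Bigl[\kappa^2\bigl(\tfrac{|\Sigma_t|}{\vartheta_{n-1}}\bigr)^{\frac{n}{n-1}}+\epsilon\bigl(\tfrac{|\Sigma_t|}{\vartheta_{n-1}}\bigr)^{\frac{n-2}{n-1}}+q^2\bigl(\tfrac{|\Sigma_t|}{\vartheta_{n-1}}\bigr)^{-\frac{n-2}{n-1}}\Bigr]
\]
is \emph{not} monotone in general along IMCF. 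If you carry out the differentiation you describe, using \eqref{fH}, \eqref{bnf} and $\partial_t|\Sigma_t|=|\Sigma_t|$, the $\epsilon$-terms cancel, the $q^2$-terms are handled by H\"older, the $m$-term recombines with $\kappa^2 s_0^n$ and $q^2 s_0^{2-n}$ via $f(s_0)=0$, and what survives is schematically
\[
\frac{dQ}{dt}\;\leq\;\frac{n-2}{n-1}\,Q\;+\;2\bigl(\mathcal{J}(\Sigma_t)-\mathcal{K}(\Sigma_t)\bigr),
\]
where $\mathcal{J}=n\kappa^2\int_{\Omega_t}f\,dv$ and $\mathcal{K}=\kappa^2\vartheta_{n-1}\bigl[(\tfrac{|\Sigma_t|}{\vartheta_{n-1}})^{\frac{n}{n-1}}-(\tfrac{|\partial P|}{\vartheta_{n-1}})^{\frac{n}{n-1}}\bigr]$. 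The sign of $\mathcal{J}-\mathcal{K}$ is the heart of the problem and is not controlled pointwise in $t$. This is exactly why the paper does not prove monotonicity of an unweighted quantity like your $Q$; it introduces the scale-invariant functional $\mathcal{L}(\Sigma_t)=|\Sigma_t|^{-\frac{n-2}{n-1}}(\cdots)$ (whose $|\Sigma_t|^{-\frac{n-2}{n-1}}$ prefactor absorbs the Gronwall $\tfrac{n-2}{n-1}Q$ term), proves only the conditional estimate $\tfrac{d}{dt}\mathcal{L}\leq 2(\mathcal{J}-\mathcal{K})/|\Sigma_t|^{\frac{n-2}{n-1}}$ together with the fact (via Heintze--Karcher) that $(\mathcal{J}-\mathcal{K})/|\Sigma_t|^{\frac{n-2}{n-1}}$ is nondecreasing, and then argues by dichotomy: either $\mathcal{J}<\mathcal{K}$ for all $t$, in which case $\mathcal{L}$ is genuinely nonincreasing and one passes to the limit, or $\mathcal{J}$ catches up to $\mathcal{K}$ at some finite $t_0$, in which case one invokes Theorem \ref{MainThm-Minkow} at $\Sigma_{t_0}$. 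Your phrase ``if needed as an intermediate bound, Theorem \ref{MainThm-Minkow} applied at time $t$'' hints at this but does not supply the actual mechanism.

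A second, smaller gap is your claim that $Q(t)\to -2m(n-1)\vartheta_{n-1}$. On exact coordinate slices $Q\equiv -2m(n-1)\vartheta_{n-1}$, but $\Sigma_t$ is only asymptotically round and the two leading terms $\int fH$ and $(n-1)\kappa^2\vartheta_{n-1}(\tfrac{|\Sigma_t|}{\vartheta_{n-1}})^{\frac{n}{n-1}}$ both grow like $e^{\frac{n}{n-1}t}$, so their difference needs a sharp comparison. The paper only proves a one-sided estimate (a $\liminf$), using the Beckner--Sobolev inequality when $\epsilon=1$ and H\"older when $\epsilon=-1$ together with the decay rates from Lemmas \ref{bound-c1norm2} and \ref{est-c2}. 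Asserting a two-sided limit without this analysis is not justified, though fortunately only the $\liminf$ direction is needed.

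So: the blueprint (IMCF, substaticity, divergence identity, pass to $t\to\infty$) matches the paper, but the monotonicity statement you rely on is false as stated, and the two-case $\mathcal{J}$ versus $\mathcal{K}$ dichotomy with Theorem \ref{MainThm-Minkow} as a fallback is the missing ingredient that makes the argument work.
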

\begin{rem}
If the parameter $q$ vanishes, the  Reissner-Nordstr\"om-AdS manifold becomes the Kottler space. Theorem \ref{AF-ineq} is reduced to Theorem 1.5 in \cite{GWWX}.
\end{rem}

Now we state that the  inequality for mass in  asymptotically hyperbolic graph manifolds over Reissner-Nordstr\"{o}m-AdS manifold $P$.

\begin{thm}\label{Penrose0}
	Suppose $M^{n}\subset Q$ is an ALH graph over $P$ with inner boundary $\Sigma$, associated to a smooth function $u:P\backslash\Omega\rightarrow\mathbb{R}$. Assume that $\Sigma$ is in a level set of $u$ and $|\bn u|\rightarrow\infty\ as\ x\rightarrow\Sigma$. Then
	\begin{equation}\label{Penrose0-ineq0}
	m(M,g)\geq m+c_n\left(\int_{M}\left(R_{g}-R_{\barg}\right)\left\langle\frac{\partial}{\partial t},\xi\right\rangle d\mu_{M}+\int_{\Sigma}fHd{\mu}\right),
	\end{equation}
	where $H$ is the mean curvature is of $\Sigma $ in $(P,\barg)$, $\xi$ is the unit outer normal of $(M,g)$ in $(Q,\tg)$,  the constant $c_n$ is defined by
	\begin{equation*}
	c_n=\frac{1}{2(n-1)\vartheta_{n-1}}
	\end{equation*}	
	and
	\begin{equation*}
	R_{\barg}=-n(n-1)\kappa^2+(n-1)(n-2)q^2 s^{2-2n}
	\end{equation*}
	is the scalar curvature of the  Reissner-Nordstr\"om-AdS manifold $P$. Equlity in \eqref{Penrose0-ineq0} holds if and only if $M$ is rotationaly symmetric. Moreover, if $R_{g}\geq \bar{R}$, we have
	\begin{equation}\label{Penrose0-ineq1}
	m(M,g)\geq m+c_n\int_{\Sigma}fHd{\mu}.
	\end{equation}
\end{thm}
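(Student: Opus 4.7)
The plan is to follow the graph mass strategy of Lam \cite{Lam} and its asymptotically hyperbolic adaptations by Dahl-Gicquaud-Sakovich \cite{DGS} and de Lima-Gir\~ao \cite{dLG3}, now adapted to the Reissner-Nordstr\"om-AdS background $(P,\barg)$. The core identity will express $f\ton{R_g-R_{\barg}}\langle\partial/\partial t,\xi\rangle\,d\mu_M$ as an exact divergence on $P$ plus a pointwise nonnegative correction; integrating balances the ALH flux at infinity against the boundary flux on $\Sigma$, and dropping the nonnegative correction produces the stated inequality.

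First I would record the graph geometry. Writing $M=\{(x,u(x)):x\in P\setminus\Omega\}$ in $(Q,\tg)$ with $\tg=\barg+dt^2$, the induced metric is $g=\barg+du\otimes du$ and the upward unit normal is
\[
\xi=\frac{1}{\sqrt{1+|\bn u|^{2}}}\left(\frac{\partial}{\partial t}-\bn u\right),
\]
so $\langle\partial/\partial t,\xi\rangle=1/\sqrt{1+|\bn u|^{2}}$ and $d\mu_M=\sqrt{1+|\bn u|^{2}}\,dv_{\barg}$. Applying the Gauss equation to $M\subset Q$, using the warped-product form of the Riemann tensor of $(P,\barg)$, and invoking the static potential identity for $f$ on Reissner-Nordstr\"om-AdS (which follows from a direct computation with $f^{2}=\epsilon+\kappa^{2}s^{2}-2ms^{2-n}+q^{2}s^{4-2n}$), I expect the identity to assume the schematic form
\[
f\ton{R_g-R_{\barg}}\langle\partial_{t},\xi\rangle\,d\mu_M=\dd\ton{\omega_u}+\mathcal{N}(u)\,dv_{\barg},
\]
where $\omega_u$ is an $(n-1)$-form built from $u$, $\bn u$ and $f$, and $\mathcal{N}(u)\geq 0$ is a Gauss-type quadratic in the second fundamental form of $M$ in $Q$.

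Next I would integrate this identity over the region in $P\setminus\Omega$ bounded by $\Sigma$ and a coordinate sphere $\{s=r\}$, and let $r\to\infty$. The flux at infinity produces, by the ALH mass formula (see \cite{CH,Wang,M}) and the matching asymptotic profile of $u$, exactly the quantity $(m(M,g)-m)/c_n$; the shift by $m$ reflects the fact that the reference metric $\barg$ itself carries mass $m$, so the expansion of $u$ only contributes the excess mass. On the inner boundary, the hypothesis $|\bn u|\to\infty$ along $\Sigma$ forces the tangent plane of $M$ to become vertical, so $\xi$ converges to the outward unit conormal of $\Sigma$ in $(P,\barg)$; a limiting argument then evaluates the boundary flux as $-\int_{\Sigma}fH\,d\mu$, with $H$ the mean curvature of $\Sigma$ in $(P,\barg)$. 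Discarding the nonnegative interior contribution $\mathcal{N}(u)$ yields \eqref{Penrose0-ineq0}, and equality forces $\mathcal{N}(u)\equiv 0$, which after chasing the Gauss equation enforces rotational symmetry of $M$. Finally, \eqref{Penrose0-ineq1} is immediate, since under $R_g\geq R_{\barg}$ and $\langle\partial_t,\xi\rangle>0$ the interior integral in \eqref{Penrose0-ineq0} is nonnegative.

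The main technical obstacle is isolating the correct divergence structure in the fundamental identity: one must combine the Gauss equation, the warped-product Riemann tensor of $(P,\barg)$, and the precise static potential identity for $f$ in such a way that all non-divergence, non-quadratic pieces cancel exactly. This cancellation depends sensitively on the explicit form of $f$ and on the background scalar curvature $R_{\barg}=-n(n-1)\kappa^{2}+(n-1)(n-2)q^{2}s^{2-2n}$, in particular on how the charge term $q^{2}s^{4-2n}$ is absorbed. Two secondary difficulties are (i) the asymptotic expansion of the flux at infinity in ALH coordinates, which must recover $(m(M,g)-m)/c_n$ with the correct normalization $c_n=1/(2(n-1)\vartheta_{n-1})$, and (ii) the limiting argument on $\Sigma$ under the blowup $|\bn u|\to\infty$, where uniform control of the boundary integrand is needed to identify the limit as $-\int_{\Sigma}fH\,d\mu$.
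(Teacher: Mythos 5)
Your overall scheme—flux at infinity from the ALH mass, minus boundary flux on $\Sigma$, equals an interior integral of the scalar curvature defect—is the one the paper actually pursues (via the Newton-tensor language of \cite{ALM,GWW4,GWWX}). But three concrete pieces of your proposal are wrong, and fixing them is not cosmetic: they are precisely the technical content of the proof.

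First, the ambient metric is not $\tg=\barg+dt^{2}$ but the \emph{warped} Riemannian RN-AdS metric $\tg=f^{2}dt^{2}+\barg$, so the graph metric is $g=f^{2}\bn u\otimes\bn u+\barg$ and the unit normal is
\begin{equation*}
\xi=\frac{1}{\sqrt{1+f^{2}|\bn u|^{2}}}\left(f^{-1}\frac{\partial}{\partial t}-f\,\bn u\right),
\end{equation*}
with $\langle\partial_{t},\xi\rangle=f/\sqrt{1+f^{2}|\bn u|^{2}}$ and $d\mu_{M}=\sqrt{1+f^{2}|\bn u|^{2}}\,dv_{\barg}$. The $f$-factors are what eventually pair $\partial_{t}$ (a Killing field of $\tg$ by construction) with the static potential so that the boundary flux on $\Sigma$ reduces to $\int_{\Sigma}fH\,d\mu$ and the limit at infinity picks up the mass after the cross-check $m(P,\barg)=m$. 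If you drop $f$ from the ambient metric, $\partial_{t}$ is still Killing but the flux normalization, the Ricci terms, and the boundary computation all come out wrong.

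Second, the mechanism producing the inequality is not the discard of a nonnegative Gauss-type quadratic $\mathcal{N}(u)$; there is no such term. The paper's interior computation gives an exact identity
\begin{equation*}
\operatorname{div}_{g}\left(T_{1}\left(\frac{\partial}{\partial t}\right)^{T}\right)
=\left\langle\frac{\partial}{\partial t},\xi\right\rangle\left(R_{g}+n(n-1)\kappa^{2}-(n-1)(n-2)q^{2}\lambda^{2-2n}\,\frac{1+f^{2}u_{r}^{2}}{1+f^{2}|\bn u|^{2}}\right),
\end{equation*}
obtained from $\operatorname{div}_{g}(T_{1}(\partial_{t})^{T})=2S_{2}\langle\partial_{t},\xi\rangle+\operatorname{Ric}_{\tg}(\xi,(\partial_{t})^{T})$, the Gauss identity $2S_{2}=R_{g}-R_{\tg}+2\operatorname{Ric}_{\tg}(\xi,\xi)$, and the explicit form of $\operatorname{Ric}_{\tg}$ for the RN-AdS warped product. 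The inequality now comes from the elementary comparison $u_{r}^{2}\le|\bn u|^{2}$ applied to the \emph{charge term} $q^{2}\lambda^{2-2n}$: since $\frac{1+f^{2}u_{r}^{2}}{1+f^{2}|\bn u|^{2}}\le 1$, the right-hand side is $\ge\langle\partial_{t},\xi\rangle(R_{g}-R_{\barg})$ with $R_{\barg}=-n(n-1)\kappa^{2}+(n-1)(n-2)q^{2}\lambda^{2-2n}$. When $q=0$ (Kottler case) the identity is exact and nothing is discarded; the inequality is an RN-specific phenomenon, which your Lam-style heuristic misses entirely.

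Third, as a consequence, the equality case is read off from $u_{r}^{2}=|\bn u|^{2}$, i.e.\ $\bn u$ is radial, which is exactly what ``rotationally symmetric'' means here. Your phrase ``chasing the Gauss equation to enforce rotational symmetry'' corresponds to nothing in the actual argument and would not produce the conclusion. One more small bookkeeping point: the mass $m(M,g)$ is defined against the reference $b_{\epsilon}$, not against $\barg$, so recovering the stated $m+c_{n}(\cdots)$ requires a lemma that $m(P,\barg)=m$ and that replacing $(V_{\epsilon},b_{\epsilon})$ by $(f,\barg)$ in the mass integrand does not change the limit, which the paper proves using the decay $\|\barg-b_{\epsilon}\|_{b_{\epsilon}}=O(s^{-n})$. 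This decomposition is implicit in your ``expansion of $u$ only contributes the excess mass'' but needs to be spelled out.
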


From Theorem \ref{AF-ineq} and \eqref{Penrose0-ineq1}, we immediately deduce the Penrose inequality for ALH graphs.
\begin{thm}\label{Penrose}
	If $M\subset Q$ is a ALH graph as in Theorem \ref{Penrose0}, such that $\Sigma\subset(P,\barg)$ is star-shaped and mean-convex, moreover, if in additon $R_{g}\geq \bar{R}$, we have
	\begin{equation}\label{Penrose-ineq1}
	m(M,g)\geq\frac{1}{2}\left(\kappa^2\left(\frac{|\Sigma|}{\vartheta_{n-1}} \right)^{\frac{n}{n-1}}+\epsilon\left(\frac{|\Sigma|}{\vartheta_{n-1}} \right)^{\frac{n-2}{n-1}}+q^2\left(\frac{|\Sigma|}{\vartheta_{n-1}} \right)^{-\frac{n-2}{n-1}}\right).
	\end{equation}
	Equality in \eqref{Penrose-ineq1} is achieved by the  Reissner-Nordstr\"om-AdS manifold $P$.
\end{thm}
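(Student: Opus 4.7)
The plan is to simply chain together the two main ingredients already established, namely the mass lower bound \eqref{Penrose0-ineq1} in Theorem \ref{Penrose0} and the weighted Alexandrov--Fenchel inequality \eqref{AF-ineq1} in Theorem \ref{AF-ineq}, and then to observe that the ``horizon terms'' that appear are eaten exactly by the parameter $m$ by virtue of the defining equation of $s_0$.

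\emph{Step 1: invoke the two inequalities.} Since $M$ is an ALH graph with the hypotheses of Theorem \ref{Penrose0} and $R_g\geq \bar R$, inequality \eqref{Penrose0-ineq1} gives
\begin{equation*}
m(M,g)\geq m+c_n\int_{\Sigma}fH\,d\mu,\qquad c_n=\frac{1}{2(n-1)\vartheta_{n-1}}.
\end{equation*}
Because $\Sigma$ is star-shaped and mean-convex in $(P,\bar g)$, Theorem \ref{AF-ineq} applies and bounds $\int_{\Sigma}fH\,d\mu$ from below by a sum of three terms, each of the form $(n-1)A\,\vartheta_{n-1}\bigl((|\Sigma|/\vartheta_{n-1})^{\alpha}-(|\partial P|/\vartheta_{n-1})^{\alpha}\bigr)$ with $A\in\{\kappa^2,\epsilon,q^2\}$ and $\alpha\in\{\tfrac{n}{n-1},\tfrac{n-2}{n-1},-\tfrac{n-2}{n-1}\}$. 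Substituting and using $c_n(n-1)\vartheta_{n-1}=\tfrac12$ yields
\begin{equation*}
m(M,g)\geq m+\tfrac12\bigl[\cdots\bigr]_{|\Sigma|}-\tfrac12\bigl[\cdots\bigr]_{|\partial P|},
\end{equation*}
where $[\cdots]_{A}$ abbreviates the three-term expression in $|\Sigma|$ or $|\partial P|$.

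\emph{Step 2: cancel the horizon contribution.} Using $|\partial P|=s_0^{n-1}\vartheta_{n-1}$, the horizon terms simplify to $\tfrac12(\kappa^2 s_0^{n}+\epsilon s_0^{n-2}+q^2 s_0^{2-n})$. Multiplying the defining relation
\begin{equation*}
\epsilon+\kappa^2s_0^{2}-2ms_0^{2-n}+q^2s_0^{4-2n}=0
\end{equation*}
by $\tfrac12 s_0^{n-2}$ gives exactly $m=\tfrac12(\kappa^2 s_0^{n}+\epsilon s_0^{n-2}+q^2 s_0^{2-n})$, so the constant $m$ and the $|\partial P|$-contribution cancel precisely. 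What remains is the desired inequality \eqref{Penrose-ineq1}.

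\emph{Step 3: equality case.} Each of the two input inequalities is sharp on the geodesic slices of $P$, and the Reissner--Nordstr\"om--AdS manifold itself is realized as a (rotationally symmetric) graph with $\Sigma=\partial P$, for which the reduction above shows both sides agree with $m$. Thus equality in \eqref{Penrose-ineq1} is achieved by $P$ as stated.

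The argument is essentially a bookkeeping combination; the only non-trivial point, and the step I would double-check most carefully, is the algebraic identity in Step 2, where the horizon radius $s_0$ must be precisely a root of the polynomial defining $f$. All the analytic work has already been absorbed into Theorems \ref{Penrose0} and \ref{AF-ineq}.
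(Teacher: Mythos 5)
Your proof is correct and follows the same route as the paper: Theorem \ref{Penrose0} plus Theorem \ref{AF-ineq}, with the horizon terms cancelled against $m$ via the identity $2m=\kappa^2 s_0^{n}+\epsilon s_0^{n-2}+q^2 s_0^{2-n}$ obtained from the defining equation of $s_0$ (this is precisely the paper's equation \eqref{horizon mass}). The bookkeeping in Step 2 checks out.
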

\begin{rem}\label{Rigidity}
		It's easy to show that the  Reissner-Nordstr\"om-AdS manifolds $(P,\barg)$ with constant $\epsilon,\ \kappa,\ m,\ q$ and $-\kappa^{2}> c_{\epsilon}$ (see Remark \ref{positive root}) can be represented as an ALH graph over another  Reissner-Nordstr\"om-AdS manifold with constant $\epsilon,\ \kappa,\ m',\ q$ ,$m'<m$. From \eqref{mass-1} and \eqref{horizon mass} we konw that equality is achieved by the  Reissner-Nordstr\"om-AdS manifolds.
From the argument of Huang-Wu \cite{HuangWu}, we believe that the rigidity in Theorem \ref{Penrose} should hold, i.e., the eqaulity holds in  the Penrose type inequality \eqref{Penrose-ineq1}  if and only if $M$ is exactly the  Reissner-Nordstr\"om-AdS manifold.
\end{rem}

\begin{rem}
A similar argument appeared in \cite{HI08}, \cite{GL} and \cite{LW} implies that our main results in Theorem \ref{MainThm-Minkow}, Theorem \ref{AF-ineq} and Theorem \ref{Penrose} also hold for star-shaped and weakly mean convex hypersurface $\Sigma$ in  Reissner-Nordstr\"om-AdS manifold $(P,\barg)$.
\end{rem}

\begin{rem}
By use of the weak solution of inverse mean curvature flow  (see \cite {HI}) and the techniques of \cite {Wei}, it is an interesting problem to prove the Minkowski-type inequality for outward minimizing hypersurfaces in  Reissner-Nordstr\"om-AdS manifold $(P,\barg)$.
\end{rem}

The paper is organized as follows. In Section \ref{Prelim}, we collect some facts about the  Reissner-Nordstr\"om-AdS,  star-shaped hypersurfaces, inverse mean curvature flow and the asymptotically locally hyperbolic manifold;  In Section \ref{Existence}, we establish the long-time existence and convergence result of the inverse mean curvature flow for star-shaped and strictly mean convex hypersurface in  Reissner-Nordstr\"om-AdS manifold. In Section \ref{Sec:Proof of Theorem 1.1} and Section \ref{Sec:Proof of Theorem 1.3}, we will prove Theorem \ref{MainThm-Minkow} and Theorem \ref{AF-ineq}, repectively. In the last section, we will give the proof of Theorem \ref{Penrose0} and Theorem \ref{Penrose}.

\section{Preliminaries}\label{Prelim}
In this section, we collect some facts about the  Reissner-Nordstr\"om-AdS manifold, star-shaped hypersurfaces,  inverse mean curvature flow and asymptotic locally hyperoblic (ALH) mass of graphs.

\subsection{ Reissner-Nordstr\"om-AdS manifold}
We fix three positive
numbers $m,q$ and $\kappa$ such that the equation  $\epsilon+\kappa^2s^2-2ms^{2-n}+q^2s^{4-2n}=0$ has positive solutions, and let $s_0$ be the largest one.  Let $(N^{n-1}_{\epsilon}, \hg)$ be a closed space form of constant sectional curvature $\epsilon=0, \pm 1$.
As in \cite{WangZH,KI}, the  Reissner-Nordstr\"om-AdS manifold is defined by $P=(s_0,\infty)\times N_{\epsilon}$ with the metric
\begin{equation*}
\barg=\dfrac{1}{\epsilon+\kappa^2 s^2-2ms^{2-n}+q^2s^{4-2n}}ds^2 + s^2 \hg.
\end{equation*}
By a change of variable, the metric of  Reissner-Nordstr\"om-AdS manifold can be  rewritten as
\begin{equation}\label{metric-RN}
\barg=dr^2+\ld(r)^2\hg,
\end{equation}
where $\lambda(r)$ satisfies the ODE
\begin{equation}\label{lambda-ode}
\lambda'(r)=\sqrt{\epsilon+\kappa^2 \lambda^2-2m\lambda^{2-n}+q^2\lambda^{4-2n}}.
\end{equation}
Defining $f(\lambda)=\lambda'$, one can check that
\begin{equation}\label{deriative-para}
\begin{aligned}
f^\prime\lambda'=\ld^{\prime\prime}=&\kappa^2 \ld+m(n-2)\ld^{1-n}-(n-2)q^2\ld^{3-2n}.
\end{aligned}
\end{equation}
Then the function $f$ satisfies (see (1.2) in \cite{WangZH})
\begin{equation}\label{Static-Equ}
\begin{aligned}
(\bar{\Delta} f)\bar{g}-\bar{\n}^2 f+ f\RicN
=&(n-1)(n-2) q^2
\ld(r)^{4-2n}f\,
\hg,
\end{aligned}
\end{equation}
where $\RicN$ is the Ricci curvature tensor, $\bar{\Delta}$ and $\bn^2$ are the Laplacian and Hessian operators with repect to the metric $\barg$ of  Reissner-Nordstr\"om-AdS manifold.
In general, a Riemannian metric is called sub-static if $(\bar{\Delta} h)\bar{g}-\bar{\n}^2 h+ h\RicN\geq 0$ for some positive function $h$.
\begin{rem}
When $q\rightarrow0$, the  Reissner-Nordstr\"om-AdS manifold reduces to Kottler manifold (see \cite{GWWX,LW}).
\end{rem}
\begin{rem}\label{positive root}
	When $q>0$, to ensure $\psi(s):=\epsilon+\kappa^{2}s^{2}-2ms^{2-n}+q^{2}s^{4-2n}=0$ to have positive root we need the following conditions (see \cite{KI}). One can check $\psi(+\infty)=+\infty$, $\psi(0^{+})=+\infty$ and $\psi'(s)=0$ always have a single positive solution $s=a>0$. Then positive root for $\psi(s)=0$ exists if and only if $\psi(a)\leq 0$. From $\psi'(a)=0$, we get
	\begin{equation*}
	-\kappa^{2}=\frac{n-2}{a^{2}}\left(\frac{m}{a^{n-2}}-\frac{q^{2}}{a^{2n-4}}\right)\ \Big(=:g(a)\Big).
	\end{equation*}
	Combining it with $\psi(a)\leq 0$ we have
	\begin{equation*}
	0\geq\epsilon-\frac{nm}{a^{n-2}}+\frac{(n-1)q^{2}}{a^{2n-4}}.
	\end{equation*}
	If $D:=n^{2}m^{2}-4\epsilon(n-1)q^{2}\geq 0$, then
	\begin{equation*}\label{}
	\frac{1}{a^{n-2}}\leq\frac{1}{a_{c}^{n-2}}:=\frac{nm+\sqrt{D}}{2(n-1)q^{2}}
	\end{equation*}
	ensures $\psi(a)\leq 0$. Equivalently,
	\begin{equation}\label{positive-root}
	-\kappa^{2}=g(a)\geq g(a_{c})=
	\begin{cases}
	&c_{0}:=-\frac{(n-1)^{2}-1}{(n-1)^{2}}\left(\frac{n}{n-1}\right)^{\frac{2}{n-2}}\left(\frac{m^{2}}{q^{2}}\right)^{\frac{n}{n-2}}m^{-\frac{2}{n-2}},\ \epsilon=0,\\
	&c_{-1}:=-2^{\frac{2}{n-2}}\frac{n-2}{n-1}\left(\sqrt{D}-(n-2)m\right)\left(\sqrt{D}-nm\right)^{-\frac{n}{n-2}},\ \epsilon=-1,\\
	&c_{1}:=-2^{\frac{2}{n-2}}\frac{n-2}{n-1}\left(\sqrt{D}-(n-2)m\right)\left(nm-\sqrt{D}\right)^{-\frac{n}{n-2}},\ \epsilon=1.
	\end{cases}
	\end{equation}
When $\epsilon=0,\ -1$, $D\geq 0$ automatically. When $\epsilon=1$, $-\kappa^{2}\geq c_{1}$ implies $q^{2}<m^{2}$. Therefore, $\psi(s)=0$ has positive root when $-\kappa^{2}\geq c_{\epsilon}$ ($c_{\epsilon}=c(\epsilon,n,m,q)$ as above) and $q^{2}<m^{2}$ if $\epsilon=1$. When $\epsilon=1$, \eqref{positive-root} can be  written as $q<m,\ \kappa<<\infty$, which was observed by Z.H. Wang in \cite{WangZH}.
\end{rem}
\begin{rem}
One can check that when $\lambda\geq s_0$, where $s_0$ is the largest positive root of the equation $\epsilon+\kappa^2s^2-2ms^{2-n}+q^2s^{4-2n}=0$, then $\lambda''\geq0$.
\end{rem}

\subsection{Asymptotic behaviors of the  Reissner-Nordstr\"om-AdS manifold}$\ $\\
Defining
$$
r(s)=\int_{s_{0}}^{s}\frac{dt}{\sqrt{\kappa^{2}t^{2}+\epsilon}}-\int_{s}^{\infty}\left(\frac{1}{\sqrt{\epsilon+\kappa^2 t^2-2mt^{2-n}+q^2t^{4-2n}}}-\frac{1}{\sqrt{\kappa^{2}t^{2}+\epsilon}}\right)dt,
$$
 we have $r(s)$ is the inverse function of $\lambda(r)$ up to a constant from \eqref{lambda-ode}.\\
For $\epsilon=1$, in \cite{WangZH}, Z.H. Wang obtained
	$$
	r(s)=\kappa^{-1}\sinh(\kappa r)-\frac{m}{n}\kappa^{-3}s^{-n}+\kappa^{-4}O(s^{-n-2}).
	$$
	By Taylor expansion,
	$$
	\begin{aligned}
	\sinh(\kappa r(s))&=\kappa s-\frac{m}{n}\kappa^{-1}s^{1-n}+\kappa^{-3}O(s^{-n-1})\\
	&=\kappa s-\frac{m}{n}\kappa^{-1}(\kappa^{-1}\sinh(\kappa r))^{1-n}+O((\kappa^{-1}\sinh(\kappa r))^{-1-n}).
	\end{aligned}
	$$
	Then
	$$
	\begin{aligned}
	\lambda(r)&=\kappa^{-1}\sinh(\kappa r)+\frac{m}{n}\kappa^{n-3}\sinh^{-n+1}(\kappa r)+O(\sinh^{-n-1}(kr))\\
	&=O(e^{\kappa r}).
	\end{aligned}
	$$
For $\epsilon=-1$ and $\epsilon=0$, by similar calcluations, we have
	\begin{align}
	\lambda(r)=&\kappa^{-1}\cosh(\kappa r)+\frac{m}{n}\kappa^{n-3}\cosh^{-n+1}(\kappa r)+O(\cosh^{-n-1}(kr))
	=O(e^{\kappa r}),\ \ \ \ \ \text{for}\ \  \epsilon=-1,\nonumber\\
	\lambda(r)=&e^{\kappa r}+\frac{m}{n}\kappa ^{-2}e^{-(n-1)\kappa r}+O(e^{-(n+1)\kappa r})
	=O(e^{\kappa r}),\ \ \ \ \ \text{for}\ \  \epsilon=0.\nonumber
	\end{align}
Therefore, the function $\lambda(r)$ has the following asymptotic expansion
\begin{equation}\label{Asy-r}
\lambda(r)=O(e^{\kappa r}).
\end{equation}
Let $\bn$ be the covariant derivative of $P$. The Riemann curvature tensor of $P$ is given by
\begin{equation}
\RN(X,Y)Z=\bn_{Y}\bn_{X}Z-\bn_{X}\bn_{Y}Z+\bn_{[X,Y]}Z.
\end{equation}
Let $\{e_{\alpha}\}_{\alpha=0}^{n-1}$ be an orthonormal frame and let $\RN_{\alpha\beta\gamma\mu}=\barg\Big(\RN(e_{\alpha},e_{\beta})e_{\gamma},e_{\mu}\Big)$ be the Riemannian curvature tensor  and   $\bn$ be the covariant derivative of the  Reissner-Nordstr\"om-AdS manifold $(P,\barg)$, respectively.
The asymptotic expansion of Riemannian curvature $\RN_{\alpha\beta\gamma\mu}$ and Ricci curvature $\RicN$ are given by (see Lemma 10 in \cite{WangZH})
\begin{align}
	\RN_{\alpha\beta\gamma\mu}=&-\kappa^{2}(\delta_{\alpha\gamma}\delta_{\beta\mu}-\delta_{\alpha\mu}\delta_{\beta\gamma})+O(e^{-n\kappa r}),\label{Asy-curv}\\
	\bn_{\rho}\RN_{\alpha\beta\gamma\mu}=&O(e^{-n\kappa r}),\label{Asy-Dcurv}\\
	\RicN(\partial_{r},\partial_{r})=&-(n-1)\kappa^{2}+O(e^{-n\kappa r}),\label{Asy-Ricurv1}\\
	\ld^{-2}\RicN(\partial_{i},\partial_{j})=&-(n-1)\kappa^{2}\hg_{ ij}+O(e^{-n\kappa r})\label{Asy-Ricurv2}.
\end{align}

\subsection{Star-shaped hypersurface in the  Reissner-Nordstr\"om-AdS manifold}
In this subseciton, we recall some basic properties of star-shaped hypersurfaces in $(P,\barg)$ (see in \cite{Ge-2006,Gerhardt,LW}). Let $\theta=\{\theta^{i}\}_{i=1,\cdots,n-1}$ be a local coordinate systems on $N_{\epsilon}$ and $\partial_{r}$ be the radial vector fields. If $\Sigma$ is a smooth closed hypersurface in the  Reissner-Nordstr\"om-AdS manifold $(P,\barg)$, the hypersurface $\Sigma$ is called star-shaped if the support function $\ps{\lambda\pt_r}{\nu}>0$ on $\Sigma$, which implies that $\Sigma$ could be parameterized by a graph
\begin{equation*}
\Sigma=\Big
\{(r(\theta),\theta):\theta\in N_{\epsilon}\Big\}.
\end{equation*}
As in \cite{BHW,Di,Ge-2006,LW}, we define a function $\varphi$ on $N_{\epsilon}$ by $\varphi(\theta)=\Phi(r(\theta))$, where $\Phi(r)$ is a positive function satisfying $\Phi'(r)=1/{\lambda(r)}$.
The tangential vector field along $\Sigma$ can be expressed in the form
\begin{equation*}
X_{i}=\partial_{i}+r_{i}\partial_{r}=\partial_{i}+\lambda\varphi_{i}\partial_{r}
\end{equation*}
and then the induced metric on $\Sigma$ is given by
\begin{equation}
g_{ij}=\lambda^{2}(\hg_{ij}+\varphi_{i}\varphi_{j}).
\end{equation}
The unit outward normal vector field $\nu$ of the hypersurface $\Sigma$ could be written as
\begin{equation}\label{normal}
\nu=\frac1{\upsilon}\left(\partial_{r}-\frac{r^{i}}{\lambda^{2}}\partial_{i}\right),\ \upsilon=\sqrt{1+|\hn\varphi|^{2}_{\hg}},
\end{equation}
where $r^{i}=r_{j}\hg^{ij}$, $\Big(\hg^{ij}\Big)=\Big(\hg_{ij}\Big)^{-1}$ and $\hn$ denotes the Levi-Civita connection on $N_{\epsilon}$.

Denoting $h_{ij}$ by the components of the second fundamental form of the hypersurce $\Sigma$. Then we have
\begin{equation}\label{II}
h_{ij}=\frac{\lambda}{\upsilon}\left(\lambda'(\hg_{ij}+\varphi_{i}\varphi_{j})-\varphi_{ij}\right),\
h_{i}^{j}=\frac{1}{\upsilon\lambda}\left(\lambda'\delta_i^j-\hat{\sigma}^{jk}\varphi_{ki}\right),
\end{equation}
where $\varphi_{ki}=\hn_i\hn_k\varphi$, $g^{ij}=\lambda^{-2}\hat{\sigma}^{jk}$ and  $\hat{\sigma}^{jk}:=\hg^{jk}-\frac{\varphi^j\varphi^k}{\upsilon^2}$ with $\varphi^j=\hg^{jk}\varphi_k$.
The mean  curvature is given by
\begin{equation}\label{mean-curv}
H=g^{ij}h_{ij}=\frac{1}{\ld\upsilon}\left((n-1)\ld^\prime-\hat{\sigma}^{ij}\varphi_{ij}\right).
\end{equation}

\subsection{The inverse mean curvature flow in  Reissner-Nordstr\"om-AdS manifold}
Let $\Sigma_{0}$ be a smooth, strictly mean-convex, star-shaped closed embedded hypersurface in $(P,\barg)$. The inverse mean curvature flow in the  Reissner-Nordstr\"om-AdS manifold is a family of embeddings $X:\Sigma\times [0,T)\rightarrow(P, \barg)$ satisfying
\begin{equation}\label{IMCF}
\left\{
\begin{aligned}
\frac{\partial}{\partial t}X=&\frac{1}{H}\nu,\\
X_{0}=&\Sigma_{0},
\end{aligned}
\right.
\end{equation}
where $\nu$ is the unit outer normal to hypersurface  $\Sigma_{t}=X(t,\Sigma)$ and  $H$ is the mean curvature of $\Sigma_t$. If the initial hypersurface is strictly mean convex, the short time existence result (see, e.g.,\cite{Ge-2006}) of \eqref{IMCF} implies the flow exists on a maximum time interval $[0,T)$. Thus it remains to study the long time behavior of the flow \eqref{IMCF}.

The equation \eqref{IMCF} is called the parametric form of the inverse mean curvature flow. For a graphic hypersurface,  the equation \eqref{IMCF} can be expressed in another form. Let $\Sigma_{0}=\{(r_{0}(\theta),\theta):\theta\in N_{\epsilon}\}$ be a star shaped hypersurface in  Reissner-Nordstr\"om-AdS manifold $(P,\barg)$ defined on $N_{\epsilon}$. Assume that  $\Sigma_{t}$ is the solution hypersurface  and  star shaped for  \eqref{IMCF}. Then it can be parametrized as
$$
\Sigma_{t}=\{(r(\theta,t),\theta):\theta\in N_{\epsilon}\}.
$$
As long as the solution of \eqref{IMCF} exists and remains star shaped, it is  equivalent to the following  non-parametric form of the flow (cf.\cite{BHW,Di,Gerhardt,GWWX})
\begin{equation}\label{Nonpara-flow}
\frac{\partial}{\partial {t}}r(\theta,t)=\frac{v}{H}.
\end{equation}
We have the following evolution equations (see \cite{BHW,Ge-2006,WangZH,Z}).
\begin{prop}[Evolution equations]\label{evl-equations}
Along the inverse mean curvature  flow \eqref{IMCF}, we have the following evolution equations:
\begin{align}
	\frac{\partial}{\partial t}g_{ij}=&\frac{2}{H}h_{ij},\label{evl-metric}\\
	 \frac{\partial}{\partial t}g^{ij}=	&-\frac{2}{H}h^{ij}\label{evl-Invmetric}\\
\frac{\partial}{\partial t}\nu=&\frac{1}{H^{2}}\n H,\label{evl-nu}\\
	\frac{\partial}{\partial t}d\mu=&d\mu\label{evl-measure},\\
	\frac{\partial }{\partial t}\varphi=&\frac{\upsilon}{\ld H}=\frac{\upsilon^2}{(n-1)\ld^\prime-\hat{\sigma}^{ij}\varphi_{ij}},\label{evl-varphi}
	\end{align}

	\begin{align}
	\frac{\partial}{\partial t}h_{ij}=&\frac{1}{H^{2}}\Delta h_{ij}-\frac{2}{H^{3}}H_{i}H_{j}-\frac{2}{H}{\RN}_{\nu i\nu j}+\frac{1}{H^{2}}\left(|A|^{2}+{\RN}^{k}_{~\nu k\nu}\right)h_{ij}\label{evl-sff}\\
	&-\frac{1}{H^{2}}\left(2h^{p}_{l}{\RN}^{l}_{~ijp}+h_{j}^{p}{\bar{R}}^{k}_{~ikp}+h_{i}^{p}{\RN}^{k}_{~jkp}\right)-\frac{1}{H^{2}}\left(\bn_{k}{\RN}_{\nu ji}^{~~~k}+\bn_{i}{\RN}^{k}_{~\nu kj}\right),\nonumber\\
	\frac{\partial}{\partial t}H=	&\frac{1}{H^{2}}\Delta H-\frac{2}{H^{3}}|\n H|^{2}-\frac{|A|^{2}}{H}-\frac{\RicN(\nu,\nu)}{H},\label{evl-mean}
	\end{align}
where $\hat{\sigma}^{ij}=\hg^{ij}-\frac{\varphi^i\varphi^j}{\upsilon^2}$, $\RN_{\alpha\beta\gamma\rho}$ is the Riemannian curvature tensor and $Ric_{\barg}$ is the Ricci tensor  of $(P,\barg)$.
In the non-parametric form the function $u:=\frac{1}{H\ps{\lambda\partial r}{\nu}}=\frac{\upsilon}{\ld H}$  evolves under
	\begin{equation}\label{evl-u}
	\frac{\partial}{\partial t}u=\frac{\Delta u}{H^{2}}-2\frac{|\n u|^{2}}{uH^{2}}-2\frac{\ps{\n u}{\n H}}{H^{3}}-(n-1)\frac{\lambda''u}{\lambda H^{2}}.
	\end{equation}
\end{prop}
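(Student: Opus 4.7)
The strategy is to derive these evolutions in the standard order used for geometric flows with normal speed $F\nu$, specialised to $F=1/H$, and then translate the parametric computations into the graphical (non-parametric) variables $r,\varphi,v$. First I would record the ambient-geometry formulas for a family $X_t$ with $\partial_t X=F\nu$: differentiating $g_{ij}=\langle X_i,X_j\rangle$ gives $\partial_t g_{ij}=2F h_{ij}$; inserting $F=1/H$ yields \eqref{evl-metric}, and the inverse metric identity $g^{ik}g_{kj}=\delta^i_j$ produces \eqref{evl-Invmetric}. The normal evolves tangentially since $|\nu|=1$, and the relation $\langle\partial_t\nu,X_i\rangle=-\partial_i F$ combined with $\partial_i(1/H)=-H_i/H^2$ gives \eqref{evl-nu}. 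For the area form one computes $\partial_t d\mu=\tfrac12 g^{ij}\partial_t g_{ij}\, d\mu=(F H)d\mu=d\mu$, which is \eqref{evl-measure}.

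Next I would address the second fundamental form and the mean curvature. Starting from $h_{ij}=-\langle\bar\nabla_{X_i}\nu,X_j\rangle$ and commuting $\partial_t$ with $\bar\nabla_{X_i}$, one obtains the general identity
\begin{equation*}
\frac{\partial}{\partial t}h_{ij}=-\nabla_i\nabla_j F+F h_i{}^k h_{kj}-F\,\bar R_{\nu i\nu j}.
\end{equation*}
Setting $F=1/H$ gives $-\nabla_i\nabla_j F=H^{-2}\nabla_i\nabla_j H-2H^{-3}H_i H_j$. To convert $\nabla_i\nabla_j H$ into $\Delta h_{ij}$ I would apply Simons' identity in a curved ambient space, which produces the curvature correction terms $2h^p{}_l\bar R^l{}_{ijp}+h_j{}^p\bar R^k{}_{ikp}+h_i{}^p\bar R^k{}_{jkp}$ together with the Codazzi-type derivative terms $\bar\nabla_k\bar R_{\nu ji}{}^k+\bar\nabla_i\bar R^k{}_{\nu kj}$; this produces \eqref{evl-sff}. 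Tracing with $g^{ij}$ and using the contracted Simons identity together with $\partial_t g^{ij}=-2h^{ij}/H$ yields \eqref{evl-mean}; the $|A|^2/H$ and $\mathrm{Ric}(\nu,\nu)/H$ terms arise from contracting $F h_i{}^k h_{kj}-F\bar R_{\nu i\nu j}$ twice (once by the trace, once coming from differentiating $g^{ij}$).

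Finally I would handle \eqref{evl-varphi} and \eqref{evl-u}. For $\varphi$, since $\varphi=\Phi(r)$ with $\Phi'=1/\lambda$, the non-parametric equation $\partial_t r=v/H$ immediately gives $\partial_t\varphi=v/(\lambda H)$, and inserting \eqref{mean-curv} rewrites the right-hand side in the stated form. For $u=v/(\lambda H)=1/(H\langle\lambda\partial_r,\nu\rangle)$ I would argue as follows: use the identity $\langle\lambda\partial_r,\nu\rangle$ transforms, under the IMCF, by a formula that can be derived from the fact that $\lambda\partial_r$ is a conformal vector field on $(P,\bar g)$ (a consequence of \eqref{deriative-para}, equivalently of \eqref{Static-Equ} with $\lambda'$ playing the role of the potential). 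The key computational input is
\begin{equation*}
\bar\nabla_X(\lambda\partial_r)=\lambda'\,X,
\end{equation*}
valid for any tangent vector $X$ on $P$. Combining this with \eqref{evl-nu}, \eqref{evl-metric} and \eqref{evl-mean}, a direct manipulation produces the heat-type equation \eqref{evl-u} with the first-order drift $-2\langle\nabla u,\nabla H\rangle/H^3-2|\nabla u|^2/(uH^2)$ and the zeroth-order term $-(n-1)\lambda''u/(\lambda H^2)$, the latter originating from the trace of the Hessian of $\lambda$ through the conformal identity. The main obstacle is this last step: carefully accounting for all curvature corrections while juggling $\partial_t$, the evolving Laplacian, and the conformal identity for $\lambda\partial_r$; once organised, however, every term collapses into the compact form stated, and no further ingredients beyond the formulas in Section \ref{Prelim} are needed.
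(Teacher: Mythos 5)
The paper does not prove Proposition \ref{evl-equations}; it only cites \cite{BHW,Ge-2006,WangZH,Z}, so there is no internal proof to compare against. Your route is exactly the standard one those references take: derive the first-variation formulas for a normal flow $\partial_t X = F\nu$, specialise to $F=1/H$, convert $\nabla_i\nabla_j F$ into $\Delta h_{ij}$ via Simons' identity in a curved ambient space (which produces both the algebraic curvature corrections and the Codazzi-derivative terms $\bn_k\RN_{\nu ji}{}^k+\bn_i\RN^k{}_{\nu kj}$ appearing in \eqref{evl-sff}), and then trace to obtain \eqref{evl-mean}. Items \eqref{evl-metric}--\eqref{evl-varphi} are as immediate as you claim.

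The part of your sketch that needs sharpening is the last one, \eqref{evl-u}. Writing $u=(H\chi)^{-1}$ with $\chi:=\langle\lambda\partial_r,\nu\rangle$, the concircular identity $\bn_X(\lambda\partial_r)=\lambda'X$ you invoke is correct in the metric \eqref{metric-RN} and gives $\partial_t\chi=\lambda'/H+H^{-2}\langle(\lambda\partial_r)^{\top},\nabla H\rangle$, but by itself it does not produce the $\lambda''$-coefficient. For that you also need the identity obtained by differentiating the concircular equation once more and contracting,
\begin{equation*}
\RicN\bigl(\lambda\partial_r, X\bigr)=(n-1)\,X(\lambda')\qquad\text{for any vector }X,
\end{equation*}
which at $X=\nu$ gives $\RicN(\lambda\partial_r,\nu)=(n-1)(\lambda''/\lambda)\chi$. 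This Ricci identity, not ``the trace of the Hessian of $\lambda$'' as you write, is where $-(n-1)\lambda''/\lambda$ comes from: it enters through the Codazzi equation when one computes the Jacobi-type equation for $\chi$, and this must then be combined with \eqref{evl-mean} in $\partial_t u=-u\bigl(\partial_t H/H+\partial_t\chi/\chi\bigr)$. The remaining terms $-2|\nabla u|^2/(uH^2)$ and $-2\langle\nabla u,\nabla H\rangle/H^3$ then come from the $\nabla H\cdot\nabla\chi$ cross-term in the expansion of $\Delta\bigl((H\chi)^{-1}\bigr)$. Once this is made explicit your approach is correct and complete. Alternatively, and arguably more economically, one can note $u=\varphi_t$ and differentiate the non-parametric flow equation \eqref{evl-varphi} directly in $t$, as the paper does implicitly in \eqref{evl-var2}; this gives the same result in graphical variables and bypasses the concircular bookkeeping entirely.
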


\subsection{Asymptotically locally hyperbolic manifold}
In order to define the mass of asymptotically locally hyperbolic manifold, we recall from (\cite{CH,dLG3,GWWX}) the following definition.
Fix $\epsilon=0,\pm1$, $\kappa>0$ and suppose $(N^{n-1}_{\epsilon},\hg)$ is a closed space form of sectional curvature $\epsilon$. Consider the product manifold $P_{\epsilon}=I_{\epsilon}\ \times\ N_{\epsilon}$, where $I_{-1}=(\frac{1}{\kappa},\infty)$ and $I_{0}=I_{1}=(0,\infty)$ endowed with the warped product metric
\begin{equation}\label{ALH-metric}
b_{\epsilon}=\frac{ds^{2}}{V_{\epsilon}^{2}(s)}+s^{2}\hg,\ s\in I_{\epsilon},\ \text{and}\ V_{\epsilon}(s)=\sqrt{\kappa^2 s^{2}+\epsilon}.
\end{equation}
We recall the following definition of asymptotic locally hyperbolic manifold and its mass which is a geometric invariant (see in \cite{CH}).
\begin{defn}\label{ALH-mfd}
	A Riemannian manifold $(M^{n},g)$ is called a asymptotically locally hyperbolic (ALH) if there exists a compact subset $K$ and a diffeomorphism at infinity $\Phi :M\backslash K\ \rightarrow N\ \times\ (s_{0},\infty)$, such that
	\begin{equation}
	||(\Phi^{-1})*g-b_{\epsilon}||_{b_{\epsilon}}+||\triangledown^{b_{\epsilon}}((\Phi^{-1})*g)||_{b_{\epsilon}}=O(s^{-\tau}),\ \ \tau>\frac{n}{2},
	\end{equation}
	and
	\begin{equation}
	\int_{M}V_{\epsilon}|R_{g}+\kappa^{2} n(n-1)|d\mu_{M}<\infty.
	\end{equation}
	Then the ALH mass can be defined as
	\begin{equation}
	m(M,g)=\frac{1}{2(n-1)\vartheta_{n-1}}\lim_{s\rightarrow\infty}\int_{N_{s}}\left(V_{\epsilon}(div^{b_{\epsilon}}e-{\rm d}\ tr^{b_{\epsilon}}e)+(tr^{b_{\epsilon}}e){\rm d}V_{b_{\epsilon}}-e(\n^{b_{\epsilon}}V_{b_{\epsilon}},\cdot)\right)\nu d\mu
	\end{equation}
	where $e:=(\Phi^{-1})*g-b_{\epsilon}$,$N_{s}=\{s\}\times \NE$, $\nu$ is the outer normal of $N_{s}$ induced by $b_{\epsilon}$ and $d\mu$ is the area element with respect to the induced metric on $N_{s}$.
\end{defn}
The corresponding  Reissner-Nordstr\"om-AdS manifold spacetime in general relavity is
\begin{equation*}
-f^{2}dt^{2}+\barg=-f^{2}dt^{2}+\frac{1}{f^2}ds^2+s^2\hg	
\end{equation*}
where $m$ is the mass and $q$ is the charge. Now we consider its Riemannian version, namely $Q=\mathbb{ R}\times P$ with mertic
\begin{equation}
\tg=f^{2}dt^{2}+\barg=f^{2}dt^{2}+\frac{1}{f^2}ds^2+s^2\hg	.
\end{equation}
We identify $P$ with the slice $\{0\}\times P\subset Q$ and consider a graph over $P$ or over $P\backslash\Omega$ in $Q$, where $\Omega$ is a compact smooth subset containing $\{0\}\times\partial P$. A graph associated to a smooth function $u:P\backslash\Omega\rightarrow\mathbb{ R}$ is a manifold $M^{n}$ with induced metric from $(Q,\tg)$,i.e.
\begin{equation}
g=f^2(s)\bn u\otimes\bn u+\barg
\end{equation}
where $\bn$ is the covariant derivative with respect to metric $\barg$.
\begin{defn}\label{ALH graph metric}
	We call that $M^{n}\subset Q$ is an ALH graph over $P\backslash\Omega$ (associated to a smooth function $u:P\backslash\Omega\rightarrow\mathbb{ R}$) if there exists a compact subset K and a diffeomorphism at infinity $\Phi:M\backslash K\rightarrow \NE\times (s_{0},+\infty)\subset P\backslash\Omega$, such that
	\begin{equation}
	||(\Phi^{-1})*g-\barg||_{\barg}+||\bn (\Phi^{-1})*g||_{\barg}=O(s^{-\tau}),\qquad \tau>\frac{n}{2}
	\end{equation}
	or equivalently,
	\begin{equation}
	|f\bn u|_{\barg}+|f\bn^{2}u+\bn f \otimes
	\bn u|_{\barg}=O(s^{-\tau}),\qquad
	 \tau>\frac{n}{2}
	\end{equation}
	and
	\begin{equation}
	\int_{M}f|R_{g}-R_{\barg}|d\mu_{M}<\infty.
	\end{equation}
\end{defn}
For such a graph over $P\backslash\Omega$, we can check definition \ref{ALH-mfd} is equivalent to definition \ref{ALH graph metric}, i.e. any ALH graph is an ALH manifold (see \cite{CH,GWWX}).
\section{Inverse mean curvature flows in the  Reissner-Nordstr\"om-AdS manifold}\label{Existence}

In this section we establish a long time existence and asymptotic behavior of the inverse mean curvature flow. Our proofs use the similar arguments in \cite{BHW,GWWX,LW,WangZH}.

\subsection{ $C^{0}$ and $C^1$-estimates}

By the parabolic maximum principle, we can obtian   $C^{0}$ and $C^1$-estimates of the inverse mean curvature flow \eqref{IMCF} in  Reissner-Nordstr\"om-AdS manifold.
\begin{lem} \label{C0-est}
	Let $\underline{r}(t)=\inf_N r(\theta,t)$ and $\bar{r}(t)=\sup_N r(\theta,t)$.
	The solution $r$ of \eqref{Nonpara-flow} satisfies
	\begin{equation}\label{C0-est1}
	\lambda(\underline{r}(0))e^{\frac{1}{n-1}t}\leq\lambda(r(\theta,t))\leq\lambda(\bar{r}(0))e^{\frac{1}{n-1}t},
	\end{equation}
	which imply $e^{\kappa r}=O(e^{\frac{1}{n-1}t})$.
\end{lem}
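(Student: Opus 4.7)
The plan is to work with the non-parametric form of the flow \eqref{Nonpara-flow}, namely $\partial_t r = v/H$, and apply the parabolic maximum principle to the function $\bar r(t)=\sup_{N_\epsilon} r(\cdot,t)$ (and dually to $\underline r(t)$). The key observation is that, at a spatial maximum of $r(\cdot,t)$, the support function simplifies drastically and the mean curvature admits a clean lower bound depending only on $\lambda$ and $\lambda'$.

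First, at a point where $r$ attains its spatial maximum, $\hat\nabla r=0$, hence $\hat\nabla\varphi=(1/\lambda)\hat\nabla r = 0$ and consequently $\upsilon=\sqrt{1+|\hat\nabla\varphi|_{\hat g}^2}=1$ and $\hat\sigma^{ij}=\hat g^{ij}$. Moreover the Hessian satisfies $\hat\nabla^2 r\leq 0$ in the matrix sense, and since $\varphi_{ij}=\Phi''(r)r_ir_j+\Phi'(r)\hat\nabla_i\hat\nabla_j r = \frac{1}{\lambda}\hat\nabla_i\hat\nabla_j r$ at this point, one gets $\hat\sigma^{ij}\varphi_{ij}\leq 0$. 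Plugging into the formula \eqref{mean-curv} yields, at the maximum,
\begin{equation*}
H \;=\; \frac{1}{\lambda\upsilon}\bigl((n-1)\lambda'-\hat\sigma^{ij}\varphi_{ij}\bigr)\;\geq\;\frac{(n-1)\lambda'(\bar r)}{\lambda(\bar r)}.
\end{equation*}

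Combining this with $\upsilon=1$ and the evolution equation \eqref{Nonpara-flow}, and using the Hamilton trick for the (Lipschitz) function $\bar r(t)$, we obtain
\begin{equation*}
\frac{d\bar r}{dt}\;\leq\;\frac{\lambda(\bar r)}{(n-1)\lambda'(\bar r)},
\end{equation*}
which is equivalent to $\frac{d}{dt}\log\lambda(\bar r)\leq \frac{1}{n-1}$. Integrating in $t$ gives $\lambda(\bar r(t))\leq \lambda(\bar r(0))\,e^{t/(n-1)}$. A completely symmetric argument at a spatial minimum of $r$ (where the Hessian is non-negative and the same simplifications occur) produces the reverse inequality $\lambda(\underline r(t))\geq \lambda(\underline r(0))\,e^{t/(n-1)}$, completing \eqref{C0-est1}.

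Finally, to deduce $e^{\kappa r}=O(e^{t/(n-1)})$ I would appeal to the asymptotic expansion \eqref{Asy-r}, which says $\lambda(r)/e^{\kappa r}$ converges to a positive constant as $r\to\infty$; this is easily checked for all three cases $\epsilon=0,\pm 1$ directly from the ODE \eqref{lambda-ode}, since $\lambda'\sim\kappa\lambda$ for large $\lambda$. Thus $e^{\kappa r}$ and $\lambda(r)$ are comparable up to multiplicative constants, and the upper bound in \eqref{C0-est1} translates into the claimed exponential bound. The only mildly delicate point in the argument is verifying that at the spatial extrema the signs of $\hat\sigma^{ij}\varphi_{ij}$ work in our favor; once this is in place, the rest is a one-line ODE comparison.
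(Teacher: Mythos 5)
Your proof is correct, and it follows the standard parabolic maximum-principle argument that the paper itself only alludes to (the text preceding the lemma just says ``By the parabolic maximum principle, we can obtain $C^{0}$ and $C^{1}$-estimates'' and gives no further details). Evaluating \eqref{mean-curv} at a spatial extremum where $\hat\nabla\varphi=0$, $\upsilon=1$, and $\hat\sigma^{ij}\varphi_{ij}$ has a favorable sign, and then integrating $\frac{d}{dt}\log\lambda(\bar r)\leq\frac{1}{n-1}$ via Hamilton's trick, is exactly the intended route.
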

\noindent Next, we will give the lower and upper bounds of the mean curvature.
\begin{lem}\label{bound-mean}
	Along the flow \eqref{IMCF}, we have $H\leq(n-1)\kappa+O(e^{-\frac{2}{n-1}t})$, and $H\geq Ce^{-\frac{1}{n-1}t}$ for some constant $C$ depending only on the initial hypersurface $\Sigma_{0}$.
\end{lem}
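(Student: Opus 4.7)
The plan is to prove the two bounds separately by applying the parabolic maximum principle to two different auxiliary quantities: the evolution \eqref{evl-mean} of $H$ itself for the upper bound, and the evolution \eqref{evl-u} of the support-type function $u=\upsilon/(\ld H)$ for the lower bound.

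\textbf{Upper bound on $H$.} Starting from \eqref{evl-mean}, I would combine the elementary inequality $|A|^{2}\geq H^{2}/(n-1)$, the Ricci asymptotics \eqref{Asy-Ricurv1}--\eqref{Asy-Ricurv2} (which, together with the warped-product structure, yield $\RicN(\nu,\nu)=-(n-1)\kappa^{2}+O(e^{-n\kappa r})$), and the $C^{0}$ estimate \eqref{C0-est1} combined with \eqref{Asy-r} to obtain $e^{-n\kappa r}\leq Ce^{-nt/(n-1)}$. At a spatial maximum of $H$ the Laplacian and gradient terms in \eqref{evl-mean} have favorable sign; writing the resulting scalar inequality for $H_{\max}(t)$ and reorganizing in the variable $\phi(t)=H_{\max}(t)^{2}-(n-1)^{2}\kappa^{2}$ yields
\begin{equation*}
\frac{d\phi}{dt}\leq -\frac{2\phi}{n-1}+Ce^{-nt/(n-1)}.
\end{equation*}
Since $n/(n-1)>2/(n-1)$ for $n\geq 3$, integrating this ODI gives $\phi(t)\leq Ce^{-2t/(n-1)}$; taking a square root produces the stated upper bound on $H$.

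\textbf{Lower bound on $H$.} For the lower bound I apply the maximum principle to \eqref{evl-u}. At a spatial maximum of $u$ the Laplacian term is non-positive and the two gradient terms vanish, while the remaining zeroth-order term $-(n-1)\ld''u/(\ld H^{2})$ is also non-positive because $\ld''\geq 0$ on $\{\ld\geq s_{0}\}$ (a consequence of \eqref{deriative-para} and the definition of $s_{0}$, as recorded in the remark preceding the present section). Hence $u_{\max}(t)$ is non-increasing in $t$, so $u(\theta,t)\leq u_{\max}(0)$ for every $t\geq 0$. Since $\upsilon\geq 1$, this gives $H=\upsilon/(\ld u)\geq 1/(\ld u_{\max}(0))$, and combining with the upper bound $\ld\leq \ld(\bar r(0))e^{t/(n-1)}$ from \eqref{C0-est1} yields $H\geq Ce^{-t/(n-1)}$ with a constant $C$ depending only on $\Sigma_{0}$.

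\textbf{Main obstacle.} The most delicate point is matching the decay rates in the upper bound: one must check that the error term coming from the Ricci asymptotics decays strictly faster than $e^{-2t/(n-1)}$ so that the ODI for $\phi$ closes at the target rate. This is exactly where the sharp $O(e^{-n\kappa r})$ decay in \eqref{Asy-Ricurv1}--\eqref{Asy-Ricurv2} (and not some weaker power) is essential, together with the uniform $C^{0}$ estimate that converts $r$-decay into $t$-decay. For the lower bound the decisive structural input is the sign condition $\ld''\geq 0$ for $\ld\geq s_{0}$, which turns the otherwise troublesome last term in \eqref{evl-u} into a favorable term at a spatial maximum.
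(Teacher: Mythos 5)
Your proof is correct and takes essentially the same route as the paper: the upper bound via the maximum principle applied to $H^2$ using $|A|^2\geq H^2/(n-1)$, the Ricci asymptotics \eqref{Asy-Ricurv1}--\eqref{Asy-Ricurv2}, and the $C^0$ estimate; the lower bound via the maximum principle applied to $u=\upsilon/(\lambda H)$ with the sign condition $\lambda''\geq0$. The only cosmetic difference is that for the lower bound the paper works with the differentiated non-parametric equation \eqref{evl-var2} for $\varphi_t$, whereas you use \eqref{evl-u} directly, but since $\varphi_t=\upsilon/(\lambda H)=u$ by \eqref{evl-varphi} these are evolution equations for the same quantity and the arguments coincide.
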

\begin{proof}
From  \eqref{evl-mean}, \eqref{Asy-Ricurv1} and \eqref{Asy-Ricurv2}, we have
\begin{equation*}
	\begin{aligned}
	\frac{\partial}{\partial t}H^{2}&=\frac{1}{H^{2}}\Delta H^{2}-\frac{3}{2H^{2}}|\n H^{2}|^{2}-2|A|^{2}-2\RicN(\nu,\nu)\\
	&\leq\frac{1}{H^{2}}\Delta H^{2}-\frac{3}{2H^{2}}|\n H^{2}|^{2}-\frac{2}{n-1}H^{2}+2\kappa^{2}(n-1)+O(e^{-n\kappa r})
	\end{aligned}
\end{equation*}
	Using maximum principle and the estimate of $\lambda$ in Lemma \ref{C0-est}, we can deduce
	\begin{equation}
	\frac{d}{dt}H^{2}_{max}\leq-\frac{2}{n-1}H^{2}_{max}+2(n-1)\kappa^{2}+O({e^{\frac{n}{n-1}t}})
	\end{equation}
which implies  $H^{2}\leq(n-1)^{2}\kappa^{2}+O(e^{-\frac{2t}{n-1}})$.\\
Denoting
\begin{equation}\label{defn-G}
\frac{1}{G}=\frac{\upsilon}{\lambda H}=\frac{\upsilon^{2}}{(n-1)\lambda'-\hat{\sigma}^{ij}\varphi_{ij}},
\end{equation}
 where
 \begin{equation*}
   \upsilon^{2}=1+\varphi_{i}\varphi^{i},\quad\text{and}\quad \upsilon\lambda h_{i}^{j}=\lambda'\delta_{i}^{j}-\hat{\sigma}^{jk}\varphi_{ik}.
 \end{equation*}
Since $\lambda'$ can be seen as a function of $\varphi$, therefore  $G$ can  be considered as a function of $\varphi,\hn\varphi$ and $\hn^{2}\varphi$.
 Taking derivative   \eqref{evl-varphi} with $t$, we get
	\begin{equation}\label{evl-var2}
	\frac{\partial}{\partial t}\varphi_{t}=-\frac{1}{G^{2}}G^{ij}\left(\varphi_{t}\right)_{ij}-\frac{1}{G^{2}}G^{i}(\varphi_{t})_{i}-\frac{n-1}{G^{2}\upsilon^{2}}\lambda''\lambda\varphi_{t},
	\end{equation}
	where $G^{ij}=\frac{\partial G}{\partial\varphi_{ij}}=-\frac{1}{\upsilon^{2}}\hat{\sigma}^{ij}<0$ and $G^{i}=\frac{\partial G}{\partial\varphi_{i}}$.
	One can check that $\lambda''\geq 0$  and $\varphi_{t}>0$ according to it's expanding. By using maximum priciple, we have
	\begin{equation}\label{var-t}
	\varphi_{t}\leq C.
	\end{equation}
From \eqref{evl-varphi}, \eqref{var-t} and Lemma \ref{C0-est}, we obtain
\begin{equation*}
H\geq C\frac{\upsilon}{\lambda}\geq Ce^{-\frac{1}{n-1}t}.
	\end{equation*}
\end{proof}

\begin{lem}\label{bound-c1norm}
	We have $|\hn\varphi|_{\hg}=O\left(e^{-\frac{1}{(n-1)^{2}}t}\right)$.
\end{lem}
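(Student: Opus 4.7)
The plan is to derive a parabolic evolution inequality for $w := \tfrac{1}{2}|\hn\varphi|^2_{\hat{g}}$ on the base $N_\epsilon$ and to conclude by combining the spatial maximum principle with an ODE comparison; since $\upsilon^2 = 1 + 2w$, controlling $w$ is equivalent to controlling $\upsilon$.

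Starting from the non-parametric flow \eqref{evl-varphi},
$$\frac{\partial \varphi}{\partial t} = \frac{\upsilon^2}{(n-1)\lambda'(r) - \hat{\sigma}^{ij}\varphi_{ij}},$$
I would apply $\hn_k$ to both sides on $N_\epsilon$, using that $\lambda'$ depends on $\varphi$ via $r = \Phi^{-1}(\varphi)$ so that $\hn_k\lambda' = \lambda''\lambda\,\varphi_k$, and then contract with $\varphi^k$. A computation in the spirit of \eqref{evl-var2} produces an equation of the form
$$\frac{\partial w}{\partial t} = a^{ij}\hn_i\hn_j w + b^k\hn_k w + \mathcal{Q}(\varphi,\hn\varphi,\hn^2\varphi),$$
with $a^{ij}$ positive-definite (of the same type as $-G^{ij}$ from \eqref{evl-var2}) and $\mathcal{Q}$ collecting the derivatives of the denominator of \eqref{evl-varphi} contracted against $\hn\varphi$, together with the curvature terms from commuting $\hn_k$ past $\hn_i\hn_j$ on $(N_\epsilon,\hat{g})$. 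At a spatial maximum of $w$ one has $\hn w=0$, which forces $\varphi^j\varphi_{jk}=0$ and hence $\hat{\sigma}^{ij}\varphi_{ij} = \hat{g}^{ij}\varphi_{ij}$ at that point, while $\hn^2 w\leq 0$ kills the leading second-order contribution. The parabolic equation then reduces at the maximum to an ODE inequality
$$\frac{d}{dt}w_{\max}(t) \leq -c_\ast(t)\,w_{\max}(t) + \text{(faster decaying error)}.$$
Using Lemma \ref{C0-est} ($\lambda=O(e^{t/(n-1)})$), the asymptotics $\lambda'\sim\kappa\lambda$ and $\lambda''\sim\kappa^2\lambda$ extracted from \eqref{lambda-ode}--\eqref{deriative-para}, the mean curvature bounds of Lemma \ref{bound-mean}, and the bound $\varphi_t\leq C$ proved there, one identifies $c_\ast(t)\geq \tfrac{2}{(n-1)^2}-o(1)$. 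ODE comparison then yields $w_{\max}(t) \leq C e^{-2t/(n-1)^2}$, i.e.\ $|\hn\varphi|_{\hat{g}} = O(e^{-t/(n-1)^2})$.

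The main obstacle will be the careful identification of the leading constant $2/(n-1)^2$ in $c_\ast(t)$. This requires a first-order expansion of the reciprocal $1/((n-1)\lambda'-\hat{\sigma}^{ij}\varphi_{ij})$, a precise treatment of the curvature terms of order $\epsilon w$ produced by commuting covariant derivatives on $N_\epsilon$, and the verification that every subleading contribution either vanishes at the maximum through $\hn w = 0$ or decays strictly faster than $e^{-2t/(n-1)^2}$ via the $C^0$ estimate and the asymptotic behavior of $\lambda$. Once this bookkeeping is in place, the pointwise ODE comparison at the moving spatial maximum delivers the global bound asserted in the lemma.
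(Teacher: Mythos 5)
Your approach is essentially the same as the paper's: evolve $w=\tfrac12|\hn\varphi|^2_{\hg}$ via the non-parametric flow, apply the spatial maximum principle, and close by ODE comparison using the $C^0$ estimate and the mean-curvature bound $H^2\leq(n-1)^2\kappa^2+O(e^{-2t/(n-1)})$. One refinement worth noting: at the spatial maximum every surviving term in \eqref{evl-w} is a multiple of $w_{\max}$ itself (the gradient term vanishes and $\hat\sigma^{ij}w_{ij}$, $-\hat\sigma^{ij}\varphi_{ki}\varphi^{k}_{j}$ are dropped as nonpositive), so the resulting inequality is of pure Gr\"onwall type $\tfrac{d}{dt}w_{\max}\leq\bigl(-\tfrac{2}{(n-1)^2}+Ce^{-2t/(n-1)}\bigr)w_{\max}$ with no separate additive error, and the $\epsilon$-curvature term $-2\epsilon(n-2)w/(\lambda^2H^2)$ is not discarded as ``faster decaying'' but is absorbed into the leading coefficient by combining $-\epsilon\leq\kappa^2\lambda^2-2m\lambda^{2-n}+q^2\lambda^{4-2n}$ with the explicit formula \eqref{deriative-para} for $\lambda''$.
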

\begin{proof}
 Defining $w=\frac{1}{2}|\hn\varphi|^{2}_{\hg}$, then
\begin{equation}\label{evl-w}
\begin{aligned}
	\frac{\partial}{\partial t}w=&\frac{\partial}{\partial t}(\varphi_{k})\varphi^{k}
	=\hn_{k}\left(\frac{\upsilon}{\lambda H}\right)\varphi^{k}\\
=&\frac{1}{\lambda^{2}H^{2}}\left(\hat{\sigma}^{ij}w_{ij}-\upsilon^{2}G^{l}w_{l}-2(n-1)\lambda\lambda''w-2\epsilon(n-2)w-\hat{\sigma}^{ij}\varphi_{ki}\varphi_{j}^{k}\right)
\end{aligned}
\end{equation}
where $G$ is defined by \eqref{defn-G}.\\
Similar in \cite{GWWX}, by using $-\epsilon\leq\kappa^{2}\lambda^{2}-2m\lambda^{2-n}+q^{2}\lambda^{4-2n}$, \eqref{evl-w}, Lemma \ref{C0-est} and Lemma \ref{bound-mean},
we have
	\begin{equation*}
	\begin{aligned}
	\frac{d}{dt}w_{max}\leq&\frac{1}{H^{2}}\left(-2(n-1)\frac{\lambda''}{\lambda}w_{max}+2(n-2)(\kappa^{2}-2m\lambda^{-n}+q^{2}\lambda^{2-2n})w_{max}\right)w_{max}\\
	=&\frac{1}{H^{2}}\left(-2\kappa^{2}-2(n-2)(n+1)m\lambda^{-n}+2n(n-2)q^{2}\lambda^{2-2n}\right)w_{max}\\
	\leq&\left(-\frac{2}{(n-1)^{2}}+Ce^{-\frac{2}{n-1}t}+Ce^{-\frac{n-2}{n-1}t}+Ce^{-\frac{2n-4}{n-1}t}\right)w_{max}\\ \leq&\left(-\frac{2}{(n-1)^{2}}+Ce^{-\frac{2}{n-1}t}\right)w_{max}
	\end{aligned}
	\end{equation*}
which implies $w=O\Big(e^{-\frac{2}{(n-1)^{2}}t}\Big)$.
\end{proof}
\begin{rem}
Since $\upsilon=\sqrt{1+|\hn\varphi|^{2}_{\hg}}$ remains  positive and then $\ps{\partial r}{\nu}=\frac{1}{\upsilon}>0$,	Lemma \ref{bound-c1norm} implies the hypersurface solution of the inverse mean curvature flow \eqref{IMCF} remains star-shaped.
\end{rem}
\begin{lem}\label{bound-mean2}
	There exists a positive constant $C$ depending on the initial data, such that $H\geq C$.
\end{lem}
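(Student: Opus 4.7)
The aim is to upgrade the decaying lower bound $H\geq Ce^{-t/(n-1)}$ of Lemma~\ref{bound-mean} to a uniform positive lower bound. The main reduction is to show that the auxiliary function $u=\upsilon/(\lambda H)$ satisfies $\lambda u\leq C$ uniformly in $t$. Indeed, since $\upsilon=\sqrt{1+|\hat\nabla\varphi|^{2}_{\hg}}\geq 1$, such a bound immediately gives
\begin{equation*}
H=\frac{\upsilon}{\lambda u}\geq \frac{1}{\lambda u}\geq C^{-1}>0.
\end{equation*}

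To obtain the bound on $\lambda u$, I would apply the parabolic maximum principle to $u$ via the evolution equation~\eqref{evl-u}. At a spatial maximum of $u$ we have $\n u=0$ and $\Delta u\leq 0$, so
\begin{equation*}
\frac{d}{dt}u_{\max}\leq -\frac{(n-1)\lambda''}{\lambda H^{2}}\,u_{\max}.
\end{equation*}
By~\eqref{deriative-para}, $\lambda''/\lambda=\kappa^{2}+(n-2)m\lambda^{-n}-(n-2)q^{2}\lambda^{2-2n}$, and Lemma~\ref{C0-est} gives $\lambda\geq \lambda(\underline r(0))e^{t/(n-1)}$, so $\lambda''/\lambda\geq \kappa^{2}-Ce^{-(n-2)t/(n-1)}$ for large $t$. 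Combining this with the sharp upper bound $H^{2}\leq (n-1)^{2}\kappa^{2}+O(e^{-2t/(n-1)})$ from Lemma~\ref{bound-mean}, one deduces
\begin{equation*}
\frac{(n-1)\lambda''}{\lambda H^{2}}\geq \frac{1}{n-1}-Ce^{-ct},\qquad c=\min\Bigl\{\tfrac{2}{n-1},\,\tfrac{n-2}{n-1}\Bigr\}>0.
\end{equation*}
A Gronwall argument then yields $u_{\max}(t)\leq C_{1}e^{-t/(n-1)}$, which combined with the upper bound $\lambda\leq \lambda(\bar r(0))e^{t/(n-1)}$ from Lemma~\ref{C0-est} produces $\lambda u\leq C$ uniformly in space and time.

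\textbf{Main obstacle.} The essential difficulty is that the two exponential rates barely match: $\lambda$ grows like $e^{t/(n-1)}$ and one needs $u$ to decay \emph{precisely} at the same rate. A crude bound $(n-1)\lambda''/(\lambda H^{2})\geq \alpha$ for some constant $\alpha>0$ (which would follow from the uniform positivity of $\lambda''/\lambda$ and the upper bound on $H$) only gives $u_{\max}\leq Ce^{-\alpha t}$, and since necessarily $\alpha\leq 1/(n-1)$ on a Reissner--Nordstr\"om--AdS background, this is insufficient to control $\lambda u$. The argument must therefore exploit the asymptotic convergence $(n-1)\lambda''/(\lambda H^{2})\to 1/(n-1)$ together with an \emph{integrable} error term, which is guaranteed by the quantitative remainder estimates in Lemma~\ref{bound-mean} and the explicit form~\eqref{deriative-para} of $\lambda''$. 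Once this exponent matching is verified, the conclusion $H\geq C$ follows.
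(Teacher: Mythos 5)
Your proposal is correct and takes essentially the same approach as the paper: apply the parabolic maximum principle to the evolution equation~\eqref{evl-u} for $u=\upsilon/(\lambda H)$, show $\frac{d}{dt}u_{\max}\leq(-\frac{1}{n-1}+Ce^{-ct})u_{\max}$ using Lemmas~\ref{C0-est} and~\ref{bound-mean}, integrate to get $u_{\max}=O(e^{-t/(n-1)})$, and combine with $\lambda=O(e^{t/(n-1)})$. (Minor slip: the $q^{2}\lambda^{2-2n}$ term decays like $e^{-2t}$, not $e^{-(n-2)t/(n-1)}$, but this only makes the error term better and does not affect the conclusion.)
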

\begin{proof}
	From the evolution equation \eqref{evl-u}, Lemma \ref{C0-est} and Lemma \ref{bound-mean}, we have
	\begin{equation*}
	\begin{aligned}
	\frac{d}{dt}u_{max}&\leq-(n-1)\frac{\lambda''u_{max}}{\lambda H^{2}}\\
	&\leq\left(-\frac{1}{n-1}+Ce^{-\frac{2}{n-1}t}\right)u_{max},
	\end{aligned}
	\end{equation*}
 which implies that $\frac{\upsilon}{\lambda H}=u\leq Ce^{-\frac{1}{n-1}t}$. The assertion follows from Lemma \ref{C0-est}.
\end{proof}
\begin{rem}
The solution of the inverse mean curvature flow \eqref{IMCF} remains strictly mean convex from	Lemma \ref{bound-mean2} .
\end{rem}
\noindent With the help of Lemma \ref{bound-mean2}, we can improve the $C^{1}$-estimate in Lemma \ref{bound-c1norm}.
\begin{lem}\label{bound-c1norm2}
	We have $|\hn\varphi|_{\hg}=O(e^{-\frac{1}{n-1}t})$.
\end{lem}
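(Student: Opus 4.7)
\medskip

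\noindent\textbf{Proof plan.} The strategy is to rerun the maximum-principle argument from Lemma \ref{bound-c1norm} on the evolution \eqref{evl-w} of $w=\tfrac{1}{2}|\hat{\nabla}\varphi|^2_{\hat g}$, but now exploit the uniform lower bound $H\ge C$ furnished by Lemma \ref{bound-mean2}. This uniform bound is precisely what allows the ``bad'' $\epsilon/\lambda^2$ term to be absorbed with the optimal decay rate, rather than being pessimistically dominated by the $\kappa^2$ part.

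\medskip

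\noindent\textbf{Step 1 (reduction at the maximum).} At a point where $w_{\max}(t)$ is attained, the second-order term $\hat\sigma^{ij}w_{ij}$ is nonpositive, the drift term $-\upsilon^2 G^l w_l$ vanishes, and $-\hat\sigma^{ij}\varphi_{ki}\varphi^k_j\le 0$. Therefore \eqref{evl-w} implies
\begin{equation*}
\frac{d}{dt}w_{\max}\;\le\;\Big(-\frac{2(n-1)\lambda''}{\lambda H^2}\;-\;\frac{2\epsilon(n-2)}{\lambda^2 H^2}\Big)\,w_{\max}.
\end{equation*}

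\medskip

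\noindent\textbf{Step 2 (leading coefficient).} From \eqref{deriative-para} one has $\lambda''/\lambda=\kappa^2+O(\lambda^{-n})$. Combining with the upper bound $H^2\le (n-1)^2\kappa^2+O(e^{-2t/(n-1)})$ from Lemma \ref{bound-mean} and the $C^0$ estimate $\lambda\ge Ce^{t/(n-1)}$ from Lemma \ref{C0-est}, I obtain
\begin{equation*}
-\frac{2(n-1)\lambda''}{\lambda H^2}\;\le\;-\frac{2}{n-1}+O\!\left(e^{-2t/(n-1)}\right).
\end{equation*}

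\medskip

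\noindent\textbf{Step 3 (controlling the $\epsilon$ term via the new $H\ge C$ bound).} This is the only step that genuinely needs Lemma \ref{bound-mean2}. Using $H\ge C>0$ uniformly and $\lambda\ge Ce^{t/(n-1)}$,
\begin{equation*}
\Big|\frac{2\epsilon(n-2)}{\lambda^2 H^2}\Big|\;\le\;\frac{C}{\lambda^2}\;=\;O\!\left(e^{-2t/(n-1)}\right).
\end{equation*}
Note that with the weaker lower bound $H\ge Ce^{-t/(n-1)}$ alone (available before Lemma \ref{bound-mean2}), this term would only be $O(1)$, which is exactly why Lemma \ref{bound-c1norm} produced the weaker rate $e^{-t/(n-1)^2}$.

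\medskip

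\noindent\textbf{Step 4 (integration).} Steps 2 and 3 combine to give
\begin{equation*}
\frac{d}{dt}w_{\max}\;\le\;\Big(-\frac{2}{n-1}+O(e^{-2t/(n-1)})\Big)\,w_{\max},
\end{equation*}
which integrates to $w_{\max}(t)\le Ce^{-2t/(n-1)}$, i.e.\ $|\hat{\nabla}\varphi|_{\hat g}=O(e^{-t/(n-1)})$, as claimed. I expect no serious obstacle beyond being careful that the $m\lambda^{-n}$ and $q^2\lambda^{2-2n}$ correction terms appearing in $\lambda''/\lambda$ are all $O(e^{-n t/(n-1)})$ and hence absorbed into the $O(e^{-2t/(n-1)})$ error; this is where the specific structure of the Reissner--Nordstr\"om--AdS warping function is used and is essentially routine given Lemma \ref{C0-est}.
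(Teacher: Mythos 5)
Your proof is correct and takes essentially the same approach as the paper: rerun the maximum-principle argument on the evolution of $w=\tfrac{1}{2}|\hat{\nabla}\varphi|^2_{\hat g}$, using the uniform lower bound $H\ge C$ from Lemma \ref{bound-mean2} so that the $\epsilon(n-2)/(\lambda^2 H^2)$ term decays like $e^{-2t/(n-1)}$ instead of merely being $O(1)$, which is exactly what allows the coefficient to sharpen from $-2/(n-1)^2$ to $-2/(n-1)$. The only cosmetic difference is that the paper separates the cases $\epsilon\in\{0,1\}$ (where the $\epsilon$-term has a favorable sign and can simply be discarded) from $\epsilon=-1$ (where $H\ge C$ is genuinely needed), whereas you handle all three values uniformly by bounding the absolute value; your diagnosis of why Lemma \ref{bound-c1norm} gave the weaker rate matches the paper's reasoning.
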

\begin{proof}
From \eqref{evl-w}, we have, for $w=\frac{1}{2}|\hn\varphi|^{2}_{\hg}$,
\begin{equation}\label{evl-w1}
\frac{\partial}{\partial t}w=\frac{1}{\lambda^{2}H^{2}}\left(\hat{\sigma}^{ij} w_{ij}-\upsilon^{2}G^{l}w_{l}-2(n-1)\lambda\lambda''w-2\epsilon(n-2)w-\hat{\sigma}^{ij}\varphi_{ki}\varphi_{j}^{k}\right).
\end{equation}
Since the term $(-\epsilon)\frac{2(n-2)}{\lambda^{2}H^{2}}\leq 0$ in \eqref{evl-w1} for $\epsilon=1$ and $0$, we can infer that
\begin{equation}\label{bound-w2}
\frac{d}{dt}w_{max}\leq \left(-\frac{2}{n-1}+Ce^{-\frac{2}{n-1}t}\right)w_{max}.
\end{equation}
For $\epsilon=-1$, we have
	\begin{equation}\label{bound-w3}
	\begin{aligned}
	\frac{d}{dt}w_{max}&\leq\left(\frac{-2(n-1)\lambda''}{\lambda H^{2}}+\frac{2(n-2)}{\lambda^{2}H^{2}}\right)w_{max}\\
	&\leq\left(\frac{-2(n-1)(\kappa^{2}+m(n-2)\lambda^{-n}-q^{2}(n-2)\lambda^{2-2n})}{H^{2}}+Ce^{-\frac{2}{n-1}t}\right)w_{max}\\
	&\leq \left(-\frac{2}{n-1}+Ce^{-\frac{2}{n-1}t}\right)w_{max}.
	\end{aligned}
	\end{equation}
where the second inequlity we used Lemma \eqref{bound-mean2}. By \eqref{bound-w2}  and \eqref{bound-w3}, we have alway the estimates $|\hn\varphi|_{\hg}=O(e^{-\frac{1}{n-1}t})$ for $\epsilon=0,\pm 1$.  	
\end{proof}

\subsection{$C^2-$estimates}
Now we can give the $C^{2}$-estimates for the flow \eqref{IMCF}.
\begin{lem}\label{est-C2}
The second fundamental form $h^{j}_i$ is uniform bounded, i.e. there exists a positive constant $C$ depending on the initial condition of the flow \eqref{IMCF}, such that  $|A|\leq C$. Consequently, $|\hn^2\varphi|_{\hg}\leq C$.
\end{lem}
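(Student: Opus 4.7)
The plan is to bound the second fundamental form uniformly in $t$ by running the parabolic maximum principle on the largest principal curvature of $\Sigma_t$, in the spirit of the arguments in \cite{BHW, GWWX, WangZH}. Combining the evolution of $h_{ij}$ in \eqref{evl-sff} with the inverse-metric evolution \eqref{evl-Invmetric}, I would first derive a PDE for the mixed Weingarten tensor of the schematic form
\begin{equation*}
\frac{\partial}{\partial t}h^j_i = \frac{1}{H^2}\Delta h^j_i - \frac{2}{H^3}\n^j H\,\n_i H - \frac{2}{H}h^j_k h^k_i + \frac{|A|^2 + \RN^k{}_{\nu k\nu}}{H^2}h^j_i + \mathcal{E}^j_i,
\end{equation*}
where $\mathcal{E}^j_i$ collects the ambient curvature contributions built from $\RN_{\nu i\nu j}$, $\RN^k{}_{ijp}$ and $\bn \RN$, each of which is $O(e^{-n\kappa r}) = O(e^{-\frac{n}{n-1}t})$ by \eqref{Asy-curv}--\eqref{Asy-Dcurv} and Lemma \ref{C0-est}.

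I would then apply the maximum principle not directly to $\kappa_{\max}$ but to the renormalized scalar $w := \upsilon\,\kappa_{\max} - \lambda'/\lambda$, which measures the deviation of the Weingarten operator from its umbilic asymptotic value $\tfrac{\lambda'}{\upsilon\lambda}\delta^j_i$ attained on the slices $\{s\}\times N_\epsilon$. At a point realizing $\max w$, diagonalize $h^j_i$ so that $h^1_1 = \kappa_{\max}$, and combine the evolution of $\kappa_{\max}$ with those of $\upsilon$ (analogous to \eqref{evl-w}) and of $\lambda'/\lambda$ (via \eqref{deriative-para}); the sign-bad combination $\frac{|A|^2}{H^2}\kappa_{\max}$ is then absorbed by $-\frac{2}{H}\kappa_{\max}^2$ together with the warping contribution $-(n-1)\lambda''/\lambda$ coming from the static equation \eqref{Static-Equ}. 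Using the uniform bounds from Lemmas \ref{C0-est}, \ref{bound-mean}, \ref{bound-c1norm2} and \ref{bound-mean2}, I would derive an ODI of the form $\frac{d}{dt}w_{\max} \leq C - c\, w_{\max}^2$, which forces $w_{\max}$ to stay bounded, whence $\kappa_{\max}\leq C$. The complementary bound $\kappa_i \geq H - (n-2)\kappa_{\max} \geq -C$ then yields the two-sided estimate $|A|\leq C$.

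For the consequence, inverting \eqref{II} gives $\hat{\sigma}^{jk}\varphi_{ki} = \lambda'\delta^j_i - \upsilon\lambda h^j_i$; since $\hat{\sigma}^{ij}$ differs from $\hg^{ij}$ by a term of size $O(e^{-2t/(n-1)})$ by Lemma \ref{bound-c1norm2}, it is invertible with uniformly controlled inverse, so the $\hg$-Hessian of $\varphi$ is bounded in terms of the previously established estimates on $\upsilon$, $\lambda$, $\lambda'$ and $h^j_i$. The main obstacle is the correct choice of auxiliary scalar $w$: a maximum principle on $\kappa_{\max}$ alone is obstructed because $\frac{|A|^2}{H^2}\kappa_{\max}$ can be positive and comparable in magnitude to $\frac{2}{H}\kappa_{\max}^2$, so the apparently dominant negative term does not automatically close the estimate. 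The subtraction of $\lambda'/\lambda$ exploits the sub-static structure \eqref{Static-Equ} of the Reissner-Nordstr\"om-AdS manifold, which supplies precisely the sign needed in the $\RN^k{}_{\nu k\nu}$ term to absorb the remaining curvature contributions.
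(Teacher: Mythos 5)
Your proposal correctly identifies the core difficulty: a maximum principle applied directly to $\kappa_{\max}$ does not close, because when $\kappa_{\max}\gg H$ (with $H$ bounded and the other principal curvatures very negative) one has $|A|\sim\kappa_{\max}$, so the reaction term $\frac{|A|^2}{H^2}\kappa_{\max}$ grows cubically while the ostensibly favorable term $-\frac{2}{H}\kappa_{\max}^2$ grows only quadratically. However, the fix you propose does not resolve this. You pass to the additive renormalization $w=\upsilon\,\kappa_{\max}-\lambda'/\lambda$, but both $\upsilon$ and $\lambda'/\lambda$ are already known to be uniformly bounded (by Lemmas \ref{bound-c1norm2} and \ref{C0-est} and \eqref{lambda-ode}), so bounding $w$ is equivalent to bounding $\kappa_{\max}$ and the degree-three obstruction is untouched. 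Your appeal to the sub-static equation \eqref{Static-Equ} to absorb this term is also off-target: the sub-static inequality controls the combination $(\bar\Delta f)\barg-\bn^2 f+f\RicN$, i.e.\ an ambient curvature quantity; it enters the paper only in Lemma \ref{Mono} (the monotonicity of $\mathcal Q(t)$) via the identity $\Delta f=\bar\Delta f-\bn^2f(\nu,\nu)-H\langle\bn f,\nu\rangle$, and has no bearing on the purely extrinsic quadratic $|A|^2$. Likewise the warping quantity $-(n-1)\lambda''/\lambda$ is bounded and cannot absorb a term cubic in $\kappa_{\max}$.

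What actually closes the estimate in the paper is a \emph{multiplicative} renormalization: set $\eta_i^j:=H\,h_i^j$ and evolve the largest eigenvalue $\mu_{\max}$ of $\eta$. Combining \eqref{evl-sff1} (for $\partial_t h_i^j$) with \eqref{evl-mean} (for $\partial_t H$), the term $\frac{|A|^2}{H^2}h_i^j\cdot H$ is exactly cancelled by $-\frac{|A|^2}{H}\,h_i^j$ coming from $\partial_t H$, and one is left with
\begin{equation*}
\frac{\partial}{\partial t}\eta_i^j=\frac{\Delta\eta_i^j}{H^2}-\frac{2\langle\n H,\n\eta_i^j\rangle}{H^3}-\frac{2\n_iH\n^jH}{H^2}-\frac{2\eta_i^k\eta_k^j}{H^2}+(n-1)\kappa^2\frac{\eta_i^j}{H^2}+\frac{|\eta|+H}{H^2}O\!\left(e^{-\frac{n}{n-1}t}\right),
\end{equation*}
in which the reaction term is now genuinely quadratic and of the good sign. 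Since $\mathrm{tr}\,\eta=H^2>0$ forces $|\eta|\le C\mu_{\max}$ once $\mu_{\max}$ is large, and $H$ is bounded above and below by Lemmas \ref{bound-mean} and \ref{bound-mean2}, the maximum principle gives $\frac{d}{dt}\mu_{\max}\le-C\mu_{\max}^2+C\mu_{\max}+C$, hence $\mu_{\max}\le C$, hence $|A|\le C$. Your concluding step — reading off $|\hn^2\varphi|_{\hg}\le C$ from \eqref{II}, the uniform bounds on $\upsilon,\lambda,\lambda'$, and the invertibility of $\hat\sigma^{ij}$ — is fine, but the main estimate as you set it up does not go through.
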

\begin{proof}
We define $\eta_{i}^{j}:=Hh_{i}^{j}$. By \eqref{evl-sff}, \eqref{C0-est1}, \eqref{Asy-Ricurv1} and \eqref{Asy-Ricurv2}, we have
	\begin{equation}\label{evl-sff1}
	\begin{aligned}
	\frac{\partial}{\partial t}h_{i}^{j}=&\frac{\Delta h_{i}^{j}}{H^{2}}-\frac{2\n_{i}H\n^{j}H}{H^{3}}+\frac{|A|^{2}}{H^{2}}h_{i}^{j}-\frac{2h_{i}^{k}h_{k}^{j}}{H}+(n-1)\kappa^{2}\frac{h_{i}^{j}}{H^{2}}+\frac{|A|+1}{H^{2}}O\left(e^{-\frac{n}{n-1}t}\right).
	\end{aligned}
	\end{equation}
Comibing \eqref{evl-mean} and \eqref{evl-sff1} gives
	\begin{equation}\label{evl-eta}
	\begin{aligned}
	\frac{\partial}{\partial t}\eta_{i}^{j}=&\frac{\partial}{\partial t} Hh_{i}^{j}+H\frac{\partial}{\partial t}h_{i}^{j}\\
	=&\frac{\Delta\eta_{i}^{j}}{H^{2}}-\frac{2<\n H,\n\eta_{i}^{j}>}{H^{3}}-\frac{2\n_{i}H\n^{j}H}{H^{2}}-\frac{2\eta_{i}^{k}\eta_{k}^{j}}{H^{2}}+(n-1)\kappa^{2}\frac{\eta_{i}^{j}}{H^{2}}+\frac{|\eta|+H}{H^{2}}O\left(e^{-\frac{n}{n-1}t}\right).
	\end{aligned}
	\end{equation}
Let $\mu_{max}$ be the maximal eigenvalue  of $\eta_{i}^{j}$. Since the trace of $(\eta_{i}^j)$ is positive, we have   $|\eta|\leq C\mu_{max}$. Noticing that $\n_{i}H\n^{j}H$ is non-negative definite and the mean curvature $H$ has a uniformly lower bound, we obtain
\begin{equation}
	\frac{d}{dt}\mu_{max}\leq-C\mu_{max}^{2}+C\mu_{max}+C.
\end{equation}
Therefore, we have $\mu_{max}\leq C$ for some uniform constant $C$. Since the mean curvature $H$ has a uniformly lower bound, We conclude that $|A|$ is bounded.
\end{proof}
With the above estimates, by standard parabolic Krylov and Schauder theory we can get the higher order estimate, which allows us to obtain the long time existence for the inverse mean curvarture flow \eqref{IMCF}.
\begin{lem}
	The inverse mean curvature flow \eqref{IMCF}  exists for all $t\in[0,\infty)$.
\end{lem}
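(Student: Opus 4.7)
The plan is to derive long-time existence from the \textit{a priori} estimates already collected, via the standard continuation argument for quasilinear parabolic equations. Since the initial hypersurface $\Sigma_{0}$ is smooth, star-shaped, and strictly mean-convex, the short-time existence theory of Huisken--Ilmanen (adapted to the non-parametric form \eqref{Nonpara-flow}) guarantees a maximal interval of existence $[0,T^{*})$ on which $\Sigma_{t}$ remains a smooth graph over $N_{\epsilon}$. The goal is to show $T^{*}=\infty$ by proving that the flow cannot leave the class of smooth, star-shaped, strictly mean-convex graphs in finite time.

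First, I would recast the flow as a scalar parabolic equation for the function $\varphi(\theta,t)$ (or equivalently $r(\theta,t)$), using \eqref{evl-varphi}: the right-hand side $\upsilon^{2}/\bigl((n-1)\lambda'-\hat\sigma^{ij}\varphi_{ij}\bigr)$ is a fully nonlinear function of $(\varphi,\hn\varphi,\hn^{2}\varphi)$. Thanks to Lemma \ref{C0-est}, $\varphi$ stays in a bounded range of the $s$-coordinate on any finite $t$-interval; Lemma \ref{bound-c1norm2} gives uniform $C^{1}$ control with $|\hn\varphi|_{\hg}=O(e^{-t/(n-1)})$, keeping the hypersurface star-shaped (since $\upsilon=\sqrt{1+|\hn\varphi|^{2}_{\hg}}\geq 1$, so $\langle\partial_{r},\nu\rangle=1/\upsilon$ stays positive); and Lemma \ref{bound-mean2} provides the uniform lower bound $H\geq C>0$, keeping $\Sigma_{t}$ strictly mean-convex. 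Combined with the upper bound $H\leq(n-1)\kappa+O(e^{-2t/(n-1)})$ from Lemma \ref{bound-mean} and the $C^{2}$ bound $|A|\leq C$ from Lemma \ref{est-C2} (hence $|\hn^{2}\varphi|_{\hg}\leq C$), the scalar equation is uniformly parabolic with coefficients bounded uniformly in $C^{1}$ on the full time interval $[0,T^{*})$.

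Once uniform parabolicity and $C^{2}$ bounds are in hand, the standard Krylov--Safonov H\"older estimates for fully nonlinear parabolic equations yield a uniform $C^{2,\alpha}$ bound on $\varphi$ on $N_{\epsilon}\times[0,T^{*})$. Differentiating the equation and invoking Schauder theory then gives uniform $C^{k,\alpha}$ bounds for every $k\geq 2$. These estimates show that as $t\uparrow T^{*}<\infty$ one could smoothly extend $\Sigma_{t}$ up to $t=T^{*}$, obtaining a smooth, star-shaped, strictly mean-convex limit hypersurface $\Sigma_{T^{*}}$. Reapplying the short-time existence theorem with $\Sigma_{T^{*}}$ as initial data would continue the flow past $T^{*}$, contradicting maximality. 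Hence $T^{*}=\infty$.

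The step I expect to be genuinely delicate is none of the higher-order regularity (which is now routine once the uniform parabolicity is in place), but rather the tacit verification that all the \textit{a priori} estimates in Subsections~3.1--3.2 hold uniformly on $[0,T^{*})$ without implicit reliance on $T^{*}=\infty$; in particular, the maximum-principle arguments for $w_{\max}$ in Lemmas \ref{bound-c1norm}--\ref{bound-c1norm2} and for $\mu_{\max}$ in Lemma \ref{est-C2} must be read as ODE comparison on the open interval $[0,T^{*})$, which they indeed are. Given this, the conclusion $T^{*}=\infty$ follows by the continuation argument above, completing the proof.
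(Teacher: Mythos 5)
Your argument is correct and follows essentially the same route as the paper, which simply remarks that the preceding $C^{0}$, $C^{1}$, and $C^{2}$ estimates (Lemmas \ref{C0-est}--\ref{est-C2}) together with standard Krylov--Safonov and Schauder theory yield higher-order bounds and hence long-time existence by the usual continuation argument. You have merely spelled out the details the paper leaves implicit, including the (correct) observation that the maximum-principle estimates are ODE comparisons valid on the open maximal interval.
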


\subsection{Asymptotic behaviors of the mean curvature and second fundamental form}
In this subsection, we will use the results above to improve estimates of the mean curvature and second fundamental form.
\begin{lem}\label{Asy-mean-sff}
	We have $H=(n-1)\kappa+O(te^{-\frac{2}{n-1}t})$ and $|h_{i}^{j}-\kappa\delta_{i}^{j}|\leq O(t^{2}e^{-\frac{2}{n-1}t})$.
\end{lem}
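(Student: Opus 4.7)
The aim is to upgrade the coarse bounds of Lemmas \ref{bound-mean}, \ref{bound-mean2}, and \ref{est-C2} to sharp asymptotic rates. I would proceed in three steps: first obtain an $O(te^{-2t/(n-1)})$ control on $H-(n-1)\kappa$ via the evolution of $H$, then use the evolution equation for the traceless second fundamental form to control the umbilic defect, and finally bootstrap these two pieces into the pointwise bound on $h_i^j-\kappa\delta_i^j$.

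\textbf{Step 1 (sharp estimate on $H$).} Using \eqref{evl-mean}, the splitting $|A|^{2}=H^{2}/(n-1)+|\mathring h|^{2}$, and the Ricci expansion \eqref{Asy-Ricurv1}--\eqref{Asy-Ricurv2} together with the $C^{0}$ bound $e^{-n\kappa r}=O(e^{-nt/(n-1)})$ from Lemma \ref{C0-est}, the scalar quantity $H$ satisfies
\[
\frac{\partial H}{\partial t}=\frac{\Delta H}{H^{2}}-\frac{2|\nabla H|^{2}}{H^{3}}-\frac{H^{2}-(n-1)^{2}\kappa^{2}}{(n-1)H}-\frac{|\mathring h|^{2}}{H}+O\!\left(e^{-nt/(n-1)}\right).
\]
Applying the parabolic maximum principle to $\Phi=H-(n-1)\kappa$ and using Lemma \ref{bound-mean} to factor $H^{2}-(n-1)^{2}\kappa^{2}\approx 2(n-1)\kappa\,\Phi$, one obtains an ODE of the type $\tfrac{d}{dt}\Phi_{\max}\le -\tfrac{2}{n-1}\Phi_{\max}+O(e^{-2t/(n-1)})$. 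Solving this linear ODE with a right-hand side decaying at exactly the critical rate (the resonance) produces the extra factor of $t$, hence $\Phi_{\max}\le Cte^{-2t/(n-1)}$. The minimum direction is treated similarly, provided one controls $|\mathring h|^{2}_{\max}$.

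\textbf{Step 2 (decay of the traceless part).} I would write the Simons-type identity for the IMCF evolution of $|\mathring h|^{2}$ using Proposition \ref{evl-equations}, together with \eqref{Asy-curv}--\eqref{Asy-Dcurv}. After discarding the good gradient terms on the right, the maximum principle gives
\[
\frac{d}{dt}|\mathring h|^{2}_{\max}\le -\frac{c}{n-1}|\mathring h|^{2}_{\max}+O\!\left(e^{-2t/(n-1)}\right)
\]
for a suitable positive constant $c$. Gronwall then yields $|\mathring h|^{2}=O(te^{-2t/(n-1)})$. Feeding this back into the minimum-principle estimate of Step 1 closes the two-sided bound $|H-(n-1)\kappa|=O(te^{-2t/(n-1)})$.

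\textbf{Step 3 (pointwise bound on $h_i^j-\kappa\delta_i^j$).} Set $k_i^j=h_i^j-\kappa\delta_i^j$. A direct linearization around the umbilic state $h_i^j=\kappa\delta_i^j$ in \eqref{evl-sff} shows that the zeroth order term $\frac{|A|^{2}}{H^{2}}h_i^j-\frac{2 h_i^k h_k^j}{H}+(n-1)\kappa^{2}\tfrac{h_i^j}{H^{2}}$ vanishes on the umbilic configuration, and its linearization equals $-\tfrac{2}{n-1}k_i^j$ plus error terms of order $|k|^{2}$, $|H-(n-1)\kappa||k|$, and $e^{-nt/(n-1)}$. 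Inserting the Step 2 bound on $|H-(n-1)\kappa|=O(te^{-2t/(n-1)})$, the eigenvalue of $k_i^j$ of largest modulus satisfies an ODE of the form $\tfrac{d}{dt}\mu\le -\tfrac{2}{n-1}\mu+O(te^{-2t/(n-1)})$, and integrating once more against the resonant forcing produces the $t^{2}$ factor, giving $|k_i^j|\le Ct^{2}e^{-2t/(n-1)}$.

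\textbf{Main obstacle.} The hardest part will be Step 2: deriving the evolution of $|\mathring h|^{2}$ in the Reissner--Nordstr\"om--AdS background and carefully cancelling the cross terms produced by $\bar{R}_{\alpha\beta\gamma\mu}$ and $\bar{\nabla}\bar{R}_{\alpha\beta\gamma\mu}$ via \eqref{Asy-curv}--\eqref{Asy-Dcurv}, so that one ends up with a negative coefficient in front of $|\mathring h|^{2}_{\max}$ and only admissible $O(e^{-2t/(n-1)})$ forcing. Once Step 2 is in place, Steps 1 and 3 are relatively mechanical applications of the maximum principle and Gronwall's inequality.
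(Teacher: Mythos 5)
Your proposal takes a genuinely different route from the paper. The paper never introduces the traceless second fundamental form. For the sharp lower bound on $H$ it instead works with the scalar $\chi=\upsilon/H=\lambda\varphi_t$ from the non-parametric (graph) picture, derives a self-contained Riccati-type inequality
\[
\frac{d}{dt}\chi_{\max}\le 2\kappa\chi_{\max}^{2}-2(n-1)\kappa^{2}\chi_{\max}^{3}+Ce^{-\frac{2}{n-1}t},
\]
whose stable fixed point is $\frac{1}{(n-1)\kappa}$ with linearized rate $-\frac{2}{n-1}$, and the resonance of the critical forcing produces the single factor of $t$. For the second statement it then works with $\eta_i^j=Hh_i^j$, uses the sharp $H$-estimate to close the corresponding Riccati inequality via the completed-square bound $-\frac{2}{(n-1)^2\kappa^2}\mu^2+\frac{2}{n-1}\mu\le 2\kappa^2-\frac{2}{n-1}\mu$, and reads off both the largest and the smallest eigenvalue from the trace. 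Neither step requires a decay estimate for $\mathring h$ as an input, and no circularity arises.

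There is a genuine gap in your plan, located in the interaction between Steps 1 and 2. For the lower bound on $H$ you write the ODE with forcing $-|\mathring h|^{2}/H$. With linear decay rate $\frac{2}{n-1}$, the resonance giving the claimed $H=(n-1)\kappa+O(te^{-\frac{2}{n-1}t})$ occurs only if $|\mathring h|^{2}=O(e^{-\frac{2}{n-1}t})$ \emph{without} a factor of $t$. But your Step 2 is claimed to give $|\mathring h|^{2}=O(te^{-\frac{2}{n-1}t})$; integrating $\dot\Phi_{\min}\ge-\frac{2}{n-1}\Phi_{\min}-Cte^{-\frac{2}{n-1}t}$ then yields only $H\ge(n-1)\kappa-O(t^{2}e^{-\frac{2}{n-1}t})$, which is weaker than the lemma. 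Iterating cannot repair this: feeding a weaker $H$-bound back into the $\mathring h$-equation only degrades the forcing by further powers of $t$. For your scheme to close, Step 2 would have to deliver $|\mathring h|^{2}=O(e^{-\frac{2}{n-1}t})$ (no $t$), i.e.\ a strictly subcritical Simons-type forcing; proving this requires working through the $\bar R_{\nu i\nu j}$, $\bar\nabla\bar R$ and $\nabla H\otimes\nabla H$ contributions and confirming that the quadratic-in-$\mathring h$ reaction terms have the right sign, none of which you carry out. The paper's choice of $\chi$ and $Hh_i^j$ avoids the problem altogether, because those quantities satisfy ODEs that are already closed at the Riccati level.
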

\begin{proof}
	The upper bound for the mean curvature $H$ is given  in Lemma \ref{bound-mean}. It is sufficent to bound $H$ from below, i.e.,
	\begin{equation*}
	H\geq(n-1)\kappa-O(te^{-\frac{2}{n-1}t}).
	\end{equation*}
Define $\chi=\frac{\upsilon}{H}=\lambda\varphi_{t}$. From Lemma \ref{bound-mean}, Lemma \ref{bound-c1norm} and Lemma \ref{bound-mean2},  $\chi$ is bounded from above and below. Using $\varphi_{t}=\frac{1}{G}=\frac{\chi}{\ld}$ and  \eqref{evl-var2}, we obtain
	\begin{equation}
	\begin{aligned}
	\frac{\partial}{\partial t}\chi=&\lambda\frac{\partial}{\partial t}\varphi_{t}+\lambda\lambda'\varphi_{t}^{2}\\
	=&\frac{\chi^{2}}{\upsilon^2\lambda^{2}}\left(\hs^{ij}\chi_{ij}-\frac{2}{\lambda}\hs^{ij}\lambda_{i}\chi_{j}-\upsilon^{2}G^{k}\chi_{k}\right)\\
	 &+\frac{\chi^{2}}{\lambda^{2}\upsilon^2}\left(-\frac{2\chi}{\lambda^{2}}\hs^{ij}\lambda_{i}\lambda_{j}-\frac{\chi}{\lambda}\hs^{ij}\lambda_{ij}+\frac{\upsilon^{2}\chi}{2\lambda}G^{k}\lambda_{k}\right)+\frac{\lambda'}{\lambda}\chi^{2}-\frac{n-1}{\upsilon^{2}}\frac{\lambda''}{\lambda}\chi^{3},
	\end{aligned}
	\end{equation}
	where $G^k=-\frac{2}{\upsilon^{2}}\left(G\varphi^{k}-\frac{1}{\upsilon^{2}}\varphi^{i}\varphi_{i}^{k}+\frac{1}{\upsilon^{4}}\varphi^{k}\varphi^{i}\varphi^{j}\varphi_{ij}\right)$.\\
By \eqref{C0-est1} and \eqref{bound-c1norm}, one can check that
\begin{equation*}
\begin{aligned}
\hs^{ij}\ld_i\ld_j\leq & C e^{\frac{2}{n-1}t},\\
-\hs^{ij}\ld_{ij}\leq & \lambda\lambda'(\upsilon^{2}\frac{\lambda}{\chi}-(n-1)\lambda')+Ce^{\frac{1}{n-1}t},\\
\frac{\upsilon^{2}}{2}G^{k}\ld_k\leq & Ce^{\frac{1}{n-1}t}.
\end{aligned}
\end{equation*}
	Putting these estimates together, and the boundness of $H$ and $\upsilon$,  we obtain that
	\begin{equation*}
	\begin{aligned}
	 \frac{d}{dt}\chi_{max}\leq&\frac{2\lambda'}{\lambda}\chi_{max}^{2}-\frac{n-1}{\upsilon^{2}}\frac{\lambda\lambda''+\lambda'^{2}}{\lambda^{2}}\chi_{max}^{3}+Ce^{-\frac{2}{n-1}t}\\
	\leq&2\kappa \chi_{max}^{2}-2(n-1)\kappa^{2}\chi_{max}^{3}+Ce^{-\frac{2}{n-1}t}.
	\end{aligned}
	\end{equation*}
This  implies that
	\begin{equation*}
	\frac{d}{dt}\chi_{max}\leq\frac{2}{(n-1)^{2}\kappa}-\frac{2}{n-1}\chi_{max}+Ce^{-\frac{2}{n-1}t},	
	\end{equation*}
	whenever $ \chi_{max}\geq\frac{1}{(n-1)\kappa}$.  From this fact, we deduce that $\frac{\upsilon}{H}=\chi\leq\frac{1}{(n-1)\kappa}+O(te^{-\frac{2}{n-1}t})$. Therefore, we can conclude that  $H\geq(n-1)\kappa-O(te^{-\frac{2}{n-1}t})$.

	Now we will give the proof of the second statement.
As above, we denote by $\mu_{\max}$ the maximal  eigenvalue of $(\eta_i^j)=(Hh_i^j)$. From $H=(n-1)\kappa+O(te^{-\frac{2}{n-1}t})$ and \eqref{evl-eta}, we have
	\begin{equation}
	\begin{aligned}
	\frac{d}{dt}\mu_{max}\leq&-\frac{2}{(n-1)^{2}\kappa^{2}}\mu_{max}^{2}+\frac{2}{n-1}\mu_{max}+O(te^{-\frac{n}{n-1}t})\\
	\leq&2\kappa^{2}-\frac{2}{n-1}\mu_{max}+O(te^{-\frac{2}{n-1}t}),
	\end{aligned}
	\end{equation}
	where the last inequality we used Cauchy-Schwarz inequality.\\
	Hence
	\begin{equation*}
\mu_{max}\leq(n-1)\kappa^{2}+O(t^{2}e^{-\frac{2}{n-1}t}).
	\end{equation*}
Since $H=(n-1)\kappa+O(te^{-\frac{2}{n-1}t})$ and $\eta_{i}^{j}=Hh_{i}^{j}$, we have the largest eigenvalue of $h_{i}^{j}$ is less than $\kappa+O(t^{2}e^{-\frac{2}{n-1}t})$.Furthemore, we can  infer the smallest eigenvalue is greater than $\kappa-O(t^{2}e^{-\frac{2}{n-1}t})$.
\end{proof}
\noindent From \eqref{II} and Lemma \ref{Asy-mean-sff}, we have the following Lemma.
\begin{lem}\label{est-c2}
$|\hn^2\varphi|_{\hg}\leq O\Big(t^{2}e^{-\frac{2}{n-1}t}\Big)$.
\end{lem}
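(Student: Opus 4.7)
The plan is to read off the bound on $|\hn^2\vi|_{\hg}$ directly from the algebraic identity \eqref{II}. Rearranging that formula gives
\begin{equation*}
\hs^{jk}\vi_{ki}=\ld'\delta_i^j-\upsilon\ld\,h_i^j,
\end{equation*}
so the entire task reduces to bounding the right-hand side and then inverting $\hs^{jk}$. The latter is harmless: by Lemma \ref{bound-c1norm2} the tensor $\hs^{jk}=\hg^{jk}-\upsilon^{-2}\vi^j\vi^k$ is a small perturbation of $\hg^{jk}$ and admits the explicit inverse $\hs_{jk}=\hg_{jk}+\vi_j\vi_k$, uniformly bounded in $\hg$.

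The key step is to add and subtract the two model values $\kappa\ld\delta_i^j$ and $\upsilon\ld\kappa\delta_i^j$ on the right, splitting
\begin{equation*}
\ld'\delta_i^j-\upsilon\ld\,h_i^j=(\ld'-\kappa\ld)\delta_i^j+\kappa\ld(1-\upsilon)\delta_i^j-\upsilon\ld\bigl(h_i^j-\kappa\delta_i^j\bigr).
\end{equation*}
Each summand is then controlled by prior results. The explicit definition $\ld'=\sqrt{\epsilon+\kappa^2\ld^2-2m\ld^{2-n}+q^2\ld^{4-2n}}$ combined with the $C^0$-bound $\ld=O(e^{t/(n-1)})$ of Lemma \ref{C0-est} yields $\ld'-\kappa\ld=O(\ld^{-1})$. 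The sharp $C^1$-bound $|\hn\vi|_{\hg}=O(e^{-t/(n-1)})$ of Lemma \ref{bound-c1norm2} gives $\upsilon-1=O(e^{-2t/(n-1)})$. The decisive third term is controlled by the second-fundamental-form asymptotics $|h_i^j-\kappa\delta_i^j|\le Ct^2 e^{-2t/(n-1)}$ from Lemma \ref{Asy-mean-sff}. Inserting these into the split and then contracting both sides with $\hs_{lj}$ recovers $\vi_{li}$ with the claimed rate.

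The only real delicacy—not a genuine obstacle—is that the two leading terms $\ld'\delta_i^j$ and $\upsilon\ld\kappa\delta_i^j$ are each individually of order $\ld\sim e^{t/(n-1)}$, so the stated decay of $|\hn^2\vi|_{\hg}$ emerges only from their mutual cancellation together with the eigenvalue estimate for $h_i^j-\kappa\delta_i^j$; once the algebraic split above is in place, the conclusion is an elementary consequence of \eqref{II} and the preceding subsection, and no new PDE analysis is required.
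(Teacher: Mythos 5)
Your route is exactly the paper's: the paper's entire proof consists of the single sentence invoking \eqref{II} together with Lemma \ref{Asy-mean-sff}, and your algebraic split of $\hs^{jk}\vi_{ki}=\ld'\delta_i^j-\upsilon\ld h_i^j$ makes that explicit. The split is correct, the estimates you quote for each summand are correct, and the inversion of $\hs^{jk}$ is indeed harmless. So methodologically there is nothing different.

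The problem is that the three intermediate bounds you state do not in fact add up to the rate claimed in the lemma, and you appear not to have checked this. You write $\ld'-\kappa\ld=O(\ld^{-1})$; since $\ld=O(e^{t/(n-1)})$ this is $O(e^{-t/(n-1)})$, which already dominates the target $O(t^{2}e^{-2t/(n-1)})$. Likewise $\kappa\ld(1-\upsilon)=O(\ld\,|\hn\vi|^{2}_{\hg})=O(e^{-t/(n-1)})$, and the "decisive" third term is
\begin{equation*}
\bigl|\upsilon\ld\,(h_i^j-\kappa\delta_i^j)\bigr|=O\!\left(\ld\cdot t^{2}e^{-2t/(n-1)}\right)=O\!\left(t^{2}e^{-t/(n-1)}\right),
\end{equation*}
again one power of $e^{-t/(n-1)}$ short. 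Each term in your own decomposition exceeds the stated bound, so the sentence "recovers $\vi_{li}$ with the claimed rate" is not justified by what precedes it; what you have actually proved is $|\hn^{2}\vi|_{\hg}=O(t^{2}e^{-t/(n-1)})$. No rearrangement of the split helps, because the identity \eqref{II} carries the unavoidable prefactor $\ld$ on $h_i^j$, and Lemma \ref{Asy-mean-sff} only controls $h_i^j-\kappa\delta_i^j$ at rate $t^{2}e^{-2t/(n-1)}$. (The paper's own one-line proof has exactly the same shortfall, so the exponent in the statement of Lemma \ref{est-c2} may well be a misprint; none of the later arguments in the paper appear to use more than $|\hn^{2}\vi|_{\hg}=O(1)$, which is Lemma \ref{est-C2}. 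But as a review of your proof, the gap is that you assert a decay rate which your stated estimates do not produce, and you should have noticed the mismatch.)
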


\section{Proof of Theorem \ref{MainThm-Minkow} }\label{Sec:Proof of Theorem 1.1}
In this section, we will to prove the Minkowski type inequality in Theorem \ref{MainThm-Minkow}. When $\epsilon=1$, Theorem \ref{MainThm-Minkow} was proved in \cite{WangZH}. When $\epsilon=0,-1$, the proof follows from a similar argument. For the convenience of the readers, we include it here.
Now we consider a family of star-shaped hypersurface $\Sigma_t$ evolving by the inverse mean curvature flow \eqref{IMCF}. Following \cite{WangZH}, we define $\mathcal{Q}(t)$ as follows
\begin{equation}\label{QT}
\begin{aligned}
\mathcal{Q}(t)=&|\Sigma_t|^{-\frac{n-2}{n-1}}\Bigg[\int_{\Sigma_t} f H d\mu-n(n-1)\kappa^2 \int_{\Omega_t} f d v+(n-1)\vartheta_{n-1}\epsilon\left(s_0^{n-2}-\left(\frac{|\Sigma_t|}{\vartheta_{n-1}} \right)^{\frac{n-2}{n-1}}\right)\\
&+(n-1)q^2\vartheta_{n-1}\left( s_0^{2-n}-\left(\frac{|\Sigma_t|}{\vartheta_{n-1}} \right)^{-\frac{n-2}{n-1}}\right)\Bigg].
\end{aligned}
\end{equation}
\begin{lem}\label{Limit} Under the inverse mean curvature flow \eqref{IMCF}, we have
\begin{equation*}
\underset{t\rightarrow\infty}{\lim \inf}\, \mathcal{Q}(t)\geq 0.
\end{equation*}
\end{lem}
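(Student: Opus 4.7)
The plan is to show $\lim_{t\to\infty}\mathcal{Q}(t)=0$, from which the $\liminf$ bound follows immediately. Following \cite{WangZH}, the argument consists of an algebraic reduction showing that the bracket in \eqref{QT} vanishes on any slice, together with a quantitative use of the convergence-to-a-slice estimates from Section \ref{Existence}.

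First I parameterize $\Sigma_t=\{(r(\theta,t),\theta)\}$ and rewrite the bracket using $d\mu=\lambda^{n-1}\upsilon\,d\hat\mu$, the expression \eqref{mean-curv} for $H$, and $\int_{\Omega_t}f\,dv=\frac{1}{n}\int_{\NE}(\lambda^{n}-s_{0}^{n})\,d\hat\mu$ (by radial integration of $\lambda'\lambda^{n-1}$). Using $(\lambda')^{2}=\epsilon+\kappa^{2}\lambda^{2}-2m\lambda^{2-n}+q^{2}\lambda^{4-2n}$, the $\lambda$-independent contributions collect into a multiple of $\kappa^{2}s_{0}^{n}+\epsilon s_{0}^{n-2}+q^{2}s_{0}^{2-n}-2m$, which vanishes since $s_{0}$ is a root of $\epsilon+\kappa^{2}s^{2}-2ms^{2-n}+q^{2}s^{4-2n}=0$. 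The bracket therefore reduces to
\begin{equation*}
\begin{aligned}
B(t)=&(n-1)\epsilon\Big[\int_{\NE}\lambda^{n-2}\,d\hat\mu-\vartheta_{n-1}\Big(\tfrac{|\Sigma_t|}{\vartheta_{n-1}}\Big)^{\frac{n-2}{n-1}}\Big]\\
&+(n-1)q^{2}\Big[\int_{\NE}\lambda^{2-n}\,d\hat\mu-\vartheta_{n-1}\Big(\tfrac{|\Sigma_t|}{\vartheta_{n-1}}\Big)^{-\frac{n-2}{n-1}}\Big]-\int_{\NE}\lambda'\lambda^{n-2}\hs^{ij}\varphi_{ij}\,d\hat\mu,
\end{aligned}
\end{equation*}
which manifestly equals $0$ on any slice (where $\lambda$ is constant and $\varphi_{ij}=0$).

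Second I plug in the asymptotic estimates: $\lambda=O(e^{t/(n-1)})$ (Lemma \ref{C0-est}), $\upsilon^{2}-1=|\hn\varphi|_{\hg}^{2}=O(e^{-2t/(n-1)})$ (Lemma \ref{bound-c1norm2}), and $|\hn^{2}\varphi|_{\hg}=O(t^{2}e^{-2t/(n-1)})$ (Lemma \ref{est-c2}). The cross term is bounded by $O(e^{t}\cdot t^{2}e^{-2t/(n-1)})=O(t^{2}e^{(n-3)t/(n-1)})$, so after division by $|\Sigma_t|^{(n-2)/(n-1)}=O(e^{(n-2)t/(n-1)})$ it contributes $O(t^{2}e^{-t/(n-1)})\to 0$ to $\mathcal{Q}(t)$. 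The $q^{2}$ bracket is handled by the power-mean inequality $M_{2-n}(\lambda)\leq M_{n-1}(\lambda)$ combined with $\int_{\NE}\lambda^{n-1}d\hat\mu\leq|\Sigma_t|$, giving nonnegativity and absolute size $O(e^{-(n-2)t/(n-1)})$, which is automatically $o(1)$ after the normalization. The $\epsilon$ bracket is shown to be nonpositive by H\"older, leaving the deeper task of quantifying its size.

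The main obstacle is controlling the H\"older defect in the $\epsilon$ bracket quantitatively: one needs to show it is $o(\lambda_{*}^{n-2})$, where $\lambda_{*}(t)$ is the effective radius defined by $|\Sigma_t|=\vartheta_{n-1}\lambda_{*}^{n-1}$. This requires more than the $C^{1}$ decay of $\varphi$; it uses $|h_i^{j}-\kappa\delta_i^{j}|=O(t^{2}e^{-2t/(n-1)})$ (Lemma \ref{Asy-mean-sff}) and the identity $h_i^{j}=(\upsilon\lambda)^{-1}(\lambda'\delta_i^{j}-\hs^{jk}\varphi_{ki})$ to conclude $\lambda(r(\theta,t))/\lambda_{*}(t)\to 1$ uniformly in $\theta$. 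A Taylor expansion of $(|\Sigma_t|/\vartheta_{n-1})^{(n-2)/(n-1)}$ about $\lambda_{*}^{n-2}$ then shows the defect is $o(\lambda_{*}^{n-2})$, yielding $B(t)=o(|\Sigma_t|^{(n-2)/(n-1)})$ and hence $\mathcal{Q}(t)\to 0$ as claimed.
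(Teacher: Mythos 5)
Your proposal aims to prove the strictly stronger statement $\lim_{t\to\infty}\mathcal{Q}(t)=0$ rather than $\liminf_{t\to\infty}\mathcal{Q}(t)\geq 0$, and the last step is where the argument breaks down. Your decomposition reduces matters to showing that the H\"older defect
$$\vartheta_{n-1}^{\frac{1}{n-1}}\Bigl(\int_{\NE}\lambda^{n-1}\,d\hat\mu\Bigr)^{\frac{n-2}{n-1}}-\int_{\NE}\lambda^{n-2}\,d\hat\mu$$
is $o(\lambda_*^{n-2})$, i.e.\ that $\lambda(\theta,t)/\lambda_*(t)\to 1$ uniformly. But the estimates established in Section \ref{Existence} do not give this. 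From $|\hn\varphi|_{\hg}=O(e^{-t/(n-1)})$ (Lemma \ref{bound-c1norm2}) one only gets $|\hn r|=\lambda|\hn\varphi|=O(1)$, so the oscillation of $r(\cdot,t)$ over $\NE$ stays bounded but is not forced to shrink; consequently $\lambda(\theta,t)/\lambda_*(t)$ is only bounded between two positive constants, not converging to $1$. You suggest upgrading this via Lemma \ref{Asy-mean-sff} and the identity $h_i^j=(\upsilon\lambda)^{-1}(\lambda'\delta_i^j-\hs^{jk}\varphi_{ki})$, but this controls $\hn^2\varphi$, and passing from a $\hn^2\varphi$ bound to a decay of $\hn\varphi$ requires a Poincar\'e/Bochner inequality on $(\NE,\hg)$. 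Such an inequality is available when $\epsilon=1$ (positive Ricci), but fails or gives nothing for $\epsilon=0,-1$, exactly the cases the paper is most concerned with. So the claim $\lambda/\lambda_*\to1$ is a genuine gap, not a routine Taylor expansion.

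The paper's proof is structured precisely to avoid this issue. In \eqref{limit-1} the cross term $-\lambda^{-1}\hat\Delta\varphi$ in the expansion of $H-(n-1)\kappa$ is integrated by parts, producing explicitly the nonnegative term $\frac{n-1}{2}\int_{\NE}\lambda^{n-4}|\hn\lambda|^2_{\hg}\,d\hat\mu$, which is \emph{not} shown to vanish after normalization (indeed, from the $C^1$ estimate alone it is only $O(e^{(n-2)t/(n-1)})$, i.e.\ $O(1)$ after dividing by $|\Sigma_t|^{(n-2)/(n-1)}$). Instead, when $\epsilon=1$ this term is exactly what makes the Beckner--Sobolev inequality applicable as a one-sided bound; when $\epsilon=0,-1$ it is simply dropped as nonnegative and H\"older is used. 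The paper therefore never needs the H\"older defect to go to zero, only to be dominated (for $\epsilon=1$) or to have the right sign (for $\epsilon\leq 0$). Your route of bounding the cross term by the $C^2$ estimate $|\hn^2\varphi|_{\hg}=O(t^2 e^{-2t/(n-1)})$ before integration by parts discards the very structure that lets the paper sidestep the convergence-to-round question, and then asks for a convergence statement that is not available. The early algebraic reduction of your bracket and the handling of the $q^2$ bracket by the power-mean inequality are fine; the issue is confined to the $\epsilon$ bracket.

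Two further small points: showing that the boundary constants assemble to $2m$ via the root equation for $s_0$ is correct but unnecessary, since the paper instead absorbs those constants into the $O(e^{-(n-2)t/(n-1)})+C$ of \eqref{limit-3} and they drop out after dividing by $|\Sigma_t|^{(n-2)/(n-1)}$; and proving $\lim\mathcal{Q}(t)=0$ is also more than is needed, as monotonicity (Lemma \ref{Mono}) together with $\liminf\mathcal{Q}(t)\geq 0$ already yields the Minkowski inequality.
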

\begin{proof}

From \eqref{mean-curv}, \eqref{bound-c1norm} and Lemma \ref{est-c2}, we obtain
\begin{equation}
\begin{aligned}
H-(n-1)\kappa=&\frac{(n-1)\lambda'}{v\lambda}-\frac{1}{v\lambda}\hat{\sigma}^{ij}\varphi_{ij}-(n-1)\kappa\\
=&\frac{(n-1)\epsilon}{2\kappa}\lambda^{-2}-\frac{n-1}{2}\kappa|\hn\varphi|^{2}-\lambda^{-1}\hat{\Delta}\varphi+O(e^{-\frac{3t}{n-1}})
\end{aligned}
\end{equation}
and
\begin{equation*}
	d\mu=\ld^{n-1}\left(1+\frac12|\hn \varphi|^2_{\hg}+O\left(e^{-\frac{4}{n-1}t}\right)\right)d\mu_{\NE},
\end{equation*}	
we have
\begin{equation}\label{limit-1}
\begin{aligned}
\int_{\Sigma_t}f\left(H-(n-1)\kappa\right)d\mu=&\int_{\NE}\left(\kappa\lambda+O(e^{-\frac{1}{n-1}t})\right)
\left( \frac{(n-1)\epsilon}{2\kappa}\lambda^{-2}-\frac{n-1}{2}\kappa|\hn\varphi|^{2}-\lambda^{-1}\hat{\Delta}\varphi+O(e^{-\frac{3t}{n-1}}) \right)\\
&\cdot
\ld^{n-1}\left(1+\frac12|\hn \varphi|^2_{\hg}+O\left(e^{-\frac{4}{n-1}t}\right)\right)
d\mu_{\NE}\\
=&\frac{(n-1)\epsilon}{2}\int_{N}\lambda^{n-2}d\mu_{\NE}+\frac{n-1}{2}\int_{N}\lambda^{n-4}|\hn\lambda|^{2}_{\hg}d\mu_{\NE}+O(e^{\frac{n-3}{n-1}t}),
\end{aligned}
\end{equation}
where the integration by parts is used in the last equality.

On the other hand, from asymptotic behaviors of $f=\ld^\prime$ and $\ld^{\prime\prime}$, we have
\begin{eqnarray}
(n-1)\int_{\Sigma_{t}}\left(\kappa f-\langle\bn f,\nu\rangle\right)d\mu&\geq&\int_{\Sigma_{t}}\{(n-1)\kappa f-(n-1)|\bar{\n}f|\}d\mu\label{limit-2}\\
&=&(n-1)\int_{\Sigma_{t}}(\kappa\lambda'-\lambda'')d\mu\nonumber\\
&=&(n-1)\int_{\Sigma_{t}}\left(\kappa^{2}\lambda+\frac{\epsilon}{2}\lambda^{-1}-\kappa^{2}\lambda+O(\lambda^{-2})\right)d\mu\nonumber\\
&=&\frac{(n-1)\epsilon}{2}\int_{N}\lambda^{n-2}d\mu_{\NE}+O(e^{\frac{n-3}{n-1}t})\nonumber
\end{eqnarray}
and
\begin{equation}\label{limit-3}
\begin{aligned}
\int_{\Sigma_{t}}(n-1)\langle\bn f,\nu \rangle
d\mu -n(n-1)\kappa^{2}\int_{\Omega_{t}}fdv
=&(n-1)\left(\int_{N}\lambda^{n-1}\lambda''\mu_{\NE}-\kappa^{2}\int_{N}\int_{r_{0}}^{r}n\lambda'\lambda^{n-1}dr\mu_{\NE}\right)\\
=&(n-1)\int_{N}(\lambda^{n-1}\lambda''-\kappa^{2}\lambda^{n})d\mu_{N}+(n-1)\kappa^{2}\vartheta_{n-1} s_{0}^{n}\\
=&O(e^{-\frac{n-2}{n-1}t})+C.
\end{aligned}
\end{equation}
Therefore, \eqref{limit-1}$+$ \eqref{limit-2}$+$\eqref{limit-3} implies
\begin{eqnarray}
\int_{\Sigma_{t}}fHd\mu-n(n-1)\kappa^{2}\int_{\Omega_{t}}fd\nu&\geq&(n-1)\epsilon\int_{N}\lambda^{n-2}d\mu_{N}+\frac{n-1}{2}\int_{N}\lambda^{n-4}|\hn\lambda|^{2}d\mu_{N}+O(e^{\frac{n-3}{n-1}t})\nonumber\\
&\geq&(n-1)\epsilon\vartheta_{n-1}^{\frac{1}{n-1}}\left(\int_{N}\lambda^{n-1}d\mu_{N}\right)^{\frac{n-2}{n-1}}+O(e^{\frac{n-3}{n-1}t}),
\end{eqnarray}
where the last inequality above is from a non-sharp version of Beckner-Sobolev type inequality in \cite{Beckner} when $\epsilon=1$ (see Proposition 21 in \cite{WangZH});  the inequality is trivial when $\epsilon=0$; similarly in \cite{GWWX}, when $\epsilon=-1$, we use the H\"{o}lder inequality
$$
\int_{N} \ld^{n-2}\leq\vartheta_{n-1}^{\frac 1{n-1}}\left( \int_{N} \ld^{n-1}\right)^{\frac{n-2}{n-1}}.
$$
Noting that
\begin{equation*}
|\Sigma_{t}|=\int_{N}\lambda^{n-1}(1+O(e^{-\frac{2t}{n-1}}))d\mu_{N}=\int_{N}\lambda^{n-1}d\mu_{N}+O(e^{\frac{n-3}{n-1}t}),
\end{equation*}
we get
\begin{equation*}
{\lim_{t\rightarrow\infty}\inf}\frac{\int_{\Sigma_t}fHd\mu-n(n-1)\kappa^{2}\int_{\Omega_t}fd\mu}{|\Sigma_t|^{\frac{n-2}{n-1}}}\geq(n-1)\epsilon\vartheta_{n-1}^{\frac{1}{n-1}}
\end{equation*}
and
\begin{equation*}
(n-1)\lim_{t\rightarrow\infty}{|\Sigma_t|^{-\frac{n-2}{n-1}}}\left(\vartheta_{n-1}\epsilon \left(s_0^{n-2}-\left(\frac{|\Sigma_t|}{\vartheta_{n-1}} \right)^{\frac{n-2}{n-1}}\right)+q^{2}\vartheta_{n-1}\left(s_0^{2-n}-\left(\frac{|\Sigma_t|}{\vartheta_{n-1}} \right)^{\frac{2-n}{n-1}}\right)\right)=-(n-1)\epsilon\vartheta_{n-1}^{\frac{1}{n-1}},
\end{equation*}
which completes the proof of Lemma \ref{Limit}.
\end{proof}

Now we prove that $\mathcal{Q}(t)$ has the following monotonicity along the inverse mean curvature flow \eqref{IMCF}.
\begin{lem}\label{Mono}
The quantity $\mathcal{Q}(t)$ is monotone nonincreasing along the flow \eqref{IMCF}.
\end{lem}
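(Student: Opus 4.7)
The plan is to differentiate $\mathcal{Q}(t)$ along the flow and verify the derivative is nonpositive, in the spirit of \cite{WangZH}. Since $\partial_t\,d\mu = d\mu$ gives $|\Sigma_t|' = |\Sigma_t|$, after factoring out $|\Sigma_t|^{-(n-2)/(n-1)}$ it suffices to prove $\mathcal{B}'(t) \leq \frac{n-2}{n-1}\mathcal{B}(t)$, where $\mathcal{B}(t)$ denotes the bracket in \eqref{QT}. The evolution of the bulk integral is immediate from the flow speed $1/H$:
\[
\frac{d}{dt}\int_{\Omega_t} f\,dv = \int_{\Sigma_t} \frac{f}{H}\,d\mu.
\]

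The core calculation is $\frac{d}{dt}\int_{\Sigma_t} fH\,d\mu$. Using $\partial_t f = \frac{1}{H}\langle\bn f,\nu\rangle$ together with \eqref{evl-mean}, the combination $\frac{f\Delta H}{H^2} - \frac{2f|\n H|^2}{H^3} = -f\Delta_{\Sigma}(1/H)$ integrates by parts on the closed hypersurface $\Sigma_t$ to $-\int_{\Sigma_t} \Delta_{\Sigma} f/H\,d\mu$. The Gauss identity $\Delta_{\Sigma} f = \bar\Delta f - \bar\n^2 f(\nu,\nu) - H\langle\bn f,\nu\rangle$, together with the static equation \eqref{Static-Equ} evaluated on $(\nu,\nu)$, cancels the $\Ric(\nu,\nu)$ term coming from \eqref{evl-mean} and produces
\[
\frac{d}{dt}\int_{\Sigma_t} fH\,d\mu = \int_{\Sigma_t}\left[\,2\langle\bn f,\nu\rangle + fH - \frac{f|A|^2}{H} - \frac{(n-1)(n-2)q^2 \ld^{4-2n} f\,\hg(\nu,\nu)}{H}\,\right]d\mu.
\]
The Cauchy--Schwarz inequality $|A|^2 \geq H^2/(n-1)$---with equality iff $\Sigma_t$ is umbilical---then bounds $fH - f|A|^2/H \leq \frac{n-2}{n-1}fH$, producing exactly the term needed to cancel the $-\frac{n-2}{n-1}\int fH\,d\mu$ piece of $-\frac{n-2}{n-1}\mathcal{B}(t)$.

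The next step converts the remaining surface integrals---$2\int\langle\bn f,\nu\rangle\,d\mu$, the $q^2\hg(\nu,\nu)$ integral, and the $-n(n-1)\kappa^2 \int f/H\,d\mu$ from the evolution of $\int_{\Omega_t} f\,dv$---into volume integrals on $\Omega_t$ via the divergence theorem. The two key identities here are the traced static equation $\bar\Delta f = n\kappa^2 f + (n-2)^2 q^2 \ld^{2-2n} f$ and the vanishing of $f$ on the horizon $\partial P$. The resulting volume integrals should balance exactly the $\epsilon$, $\kappa^2$ and $q^2$ corrections in $\frac{n-2}{n-1}\mathcal{B}(t)$, with the $s_0^{n-2}$ and $s_0^{2-n}$ constants arising from the horizon boundary contribution in the divergence theorem.

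The main obstacle is this final bookkeeping---particularly matching the $(n-1)(n-2)q^2\ld^{4-2n}\hg(\nu,\nu)f/H$ integrand against the derivative of the $q^2$-correction piece of $\mathcal{Q}(t)$. Since the structural ingredients (\eqref{Static-Equ}, the Gauss formula, and the Cauchy--Schwarz bound $|A|^2\geq H^2/(n-1)$) are all insensitive to the sign of $\epsilon$, the argument from \cite{WangZH} (treating $\epsilon=1$) should transfer to $\epsilon = 0,-1$ with only cosmetic changes. Equality throughout requires $|A|^2 = H^2/(n-1)$, i.e.\ $\Sigma_t$ is umbilical, which together with the warped-product structure of $P$ forces $\Sigma_t$ to be a geodesic slice $\{s\}\times N_\epsilon$.
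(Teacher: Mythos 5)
Your opening computation of $\frac{d}{dt}\int_{\Sigma_t} fH\,d\mu$ is correct and matches the paper's \eqref{fH}: you use the evolution of $H$, integrate $-f\Delta_\Sigma(1/H)$ by parts, apply the Gauss identity for $\Delta_\Sigma f$, invoke the static equation \eqref{Static-Equ} on $(\nu,\nu)$, and then use $|A|^2\geq H^2/(n-1)$. (One small remark: the paper simply \emph{drops} the nonnegative term $(n-1)(n-2)q^2\lambda^{4-2n}f\,\hg(\nu,\nu)/H$ rather than attempting to match it against the $q^2$-correction in $\mathcal{Q}$; your plan to carry it forward and balance it is a red herring --- the $q^2$-correction in $\mathcal{Q}$ is instead matched against a $q^2$ contribution inside $\int\langle\bn f,\nu\rangle\,d\mu$.)

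The real gap is in the next step. You assert that the surface integral $\int_{\Sigma_t} f/H\,d\mu$ arising from $\frac{d}{dt}\int_{\Omega_t} f\,dv$ can be ``converted into a volume integral on $\Omega_t$ via the divergence theorem.'' This is not a divergence-theorem identity: $f/H$ has no canonical extension as the normal component of a vector field whose divergence is under control. The paper instead uses Brendle's Heintze--Karcher-type inequality \cite{Brendle} for sub-static warped products,
\begin{equation*}
\int_{\Sigma_t}\frac{f}{H}\,d\mu \;\geq\; \frac{n}{n-1}\int_{\Omega_t} f\,dv + \frac{1}{n-1}s_0^n\vartheta_{n-1},
\end{equation*}
which is a genuine geometric inequality (it is also where part of the rigidity lives). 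Without it, the $-n(n-1)\kappa^2\int_{\Omega_t} f\,dv$ term cannot be absorbed and the differential inequality $\mathcal{B}'\leq \frac{n-2}{n-1}\mathcal{B}$ does not close. A second, smaller omission: after applying the divergence theorem to $\int\langle\bn f,\nu\rangle\,d\mu$, you are left with $-(n-2)q^2\int_{N_\epsilon} s_t^{2-n}\,d\mu_{N_\epsilon}$, and matching this against the derivative of the $q^2$-correction requires the H\"older bound $\vartheta_{n-1}\bigl(|\Sigma_t|/\vartheta_{n-1}\bigr)^{-(n-2)/(n-1)}\leq\int_{N_\epsilon} s_t^{2-n}\,d\mu_{N_\epsilon}$ together with $|\Sigma_t|\geq\int_{N_\epsilon} s_t^{n-1}\,d\mu_{N_\epsilon}$; this does not follow from ``bookkeeping'' alone. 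So your structural outline is right, but two essential inequalities --- Brendle's Heintze--Karcher and the H\"older step --- are missing, and neither is cosmetic.
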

\begin{proof}
Using the evolution equations \eqref{evl-measure} and \eqref{evl-mean},  the identity $ \Delta f=\bar{\Delta}f-\bar{\n}^2f(\nu,\nu)-H\langle \bar{\n}f,\nu\rangle$ and the  sub-static equation \eqref{Static-Equ}, we obtain
\begin{equation}\label{fH}
\begin{aligned}
\frac{d}{dt} \int_{\Sigma_t} f H d\mu=&-\int_{\Sigma_t}\frac 1H\Delta fd\mu-\int_{\Sigma_t}\frac{f}{H}\left(|A|^2+\RicN(\nu,\nu)\right)d\mu+\int_{\Sigma_t}\left(\langle \bar{\n}f,\nu\rangle+fH\right)d\mu\\
&+\int_{\Sigma_t}\left(\langle \bar{\n}f,\nu\rangle+fH\right)d\mu\\
=&-\int_{\Sigma_t}\frac 1H\left(\bar{\Delta}f-\bar{\n}^2f(\nu,\nu)+f\RicN(\nu,\nu))   \right)d\mu-\int_{\Sigma_t}\frac{f}{H}|A|^2d\mu\\
&+\int_{\Sigma_t}\left(2\langle \bar{\n}f,\nu\rangle+fH\right)d\mu\\
\leq&\int_{\Sigma_t}\left(2\langle \bar{\n}f,\nu\rangle+\frac{n-2}{n-1}fH\right)d\mu.
\end{aligned}
\end{equation}

The hypersurce $\Sigma_t$ can be seen as the graph  $\Sigma_t=\{(\omega,s_t(\omega))|\omega\in \NE\}$ of the space form $\NE$. By the divergece theorem, we obtain
\begin{equation}\label{bnf}
\int_{\Sigma_t}\langle \bar{\n}f,\nu\rangle d\mu=\kappa^2\int_{\NE} s_t^nd\mu_{\NE}+m(n-2)\vartheta_{n-1}-(n-2)q^2\int_{\NE} s_t^{2-n}d\mu_{\NE}.
\end{equation}
Since
\begin{equation*}\label{}
n\int_{\Omega_t} fd v=n\int_{\NE} \int_{s_0}^{s_t(\omega)}f\cdot \frac{s^{n-1}}{f}ds d\mu_{\NE}=\int_{\NE} s_t^n d\mu_{\NE}-s_0^n\vartheta_{n-1},
\end{equation*}
we have
\begin{equation}\label{}
\begin{aligned}
\int_{\Sigma_t}\langle \bar{\n}f,\nu\rangle d\mu=&n\kappa^2\int_{\Omega_t}fd v+\kappa^2 s_0^n\vartheta_{n-1}+m(n-2)\vartheta_{n-1}-(n-2)q^2\int_{\NE} s_t^{2-n}d\mu_{\NE}.
\end{aligned}
\end{equation}
Therefore,
\begin{equation}\label{}
\begin{aligned}
\frac{d}{dt} \int_{\Sigma_t} f H d\mu\leq&
2n\kappa^2\int_{\Omega_t}fd v+2\kappa^2 s_0^n\vartheta_{n-1}+2m(n-2)\vartheta_{n-1}-2(n-2)q^2\int_{\NE} s_t^{2-n}d\mu_{\NE}\\
&+\frac{n-2}{n-1}\int_{\Sigma_t}fH d\mu.
\end{aligned}
\end{equation}
From a  Heintze-Karcher type
inequality proved by Brendle \cite{Brendle}, we have
\begin{equation}\label{HK}
\frac{d}{dt} \int_{\Omega_t} f d v= \int_{\Sigma_t} \frac{f }{H} d\mu\geq \frac{n}{n-1}\int_{\Omega_t} f d v+\frac{1}{n-1}s_0^n\vartheta_{n-1}.
\end{equation}
Hence,
\begin{equation}\label{fh1}
\begin{aligned}
\frac{d}{dt}& \left(\int_{\Sigma_t} f H d\mu-n(n-1)\kappa^2 \int_{\Omega_t} f d v  \right)\\
\leq&  \frac{n-2}{n-1}\left(\int_{\Sigma_t} f H d\mu-n(n-1)\kappa^2 \int_{\Omega_t} f d v  \right)\\
&+2m(n-2)\vartheta_{n-1}-2(n-2)q^2\int_{\NE} s_t^{2-n}d\mu_{\NE}\\
=&\frac{n-2}{n-1}\left(\int_{\Sigma_t} f H d\mu-n(n-1)\kappa^2 \int_{\Omega_t} f d v  \right.\\
&\left.+ (n-1)\vartheta_{n-1}\left(\epsilon s_0^{n-2}+q^2s_0^{2-n}\right)-2(n-1)q^2\int_{\NE} s_t^{2-n}d\mu_{\NE}\right).
\end{aligned}
\end{equation}
By the H?lder inequality, we have
\begin{equation*}
\vartheta_{n-1}^{\frac{2n-3}{n-1}} \left(\int_{\NE} s_t^{n-1}d\mu_{\NE} \right)^{-\frac{n-2}{n-1}}\leq \int_{\NE} s_t^{2-n}d\mu_{\NE}.
\end{equation*}
From $ |\Sigma_t|\geq \int_{\NE} s_t^{n-1}d\mu_{\NE} $, we get
\begin{equation}\label{fh2}
\vartheta_{n-1} \left(\frac{|\Sigma_t|}{\vartheta_{n-1}} \right)^{-\frac{n-2}{n-1}}\leq \int_{\NE} s_t^{-(n-2)}d\mu_{\NE}.
\end{equation}
From \eqref{evl-measure}, \eqref{fh1} and \eqref{fh2},
we have
\begin{equation}\label{fh3}
\begin{aligned}
&\frac{d}{dt} \left(\int_{\Sigma_t} f H d\mu-n(n-1)\kappa^2 \int_{\Omega_t} f d v+(n-1)\epsilon\vartheta_{n-1}\left(s_0^{n-2}-\left(\frac{|\Sigma_t|}{\vartheta_{n-1}} \right)^{\frac{n-2}{n-1}}\right)\right.\\
&\left.+(n-1)q^2\vartheta_{n-1}\left( s_0^{2-n}-\left(\frac{|\Sigma_t|}{\vartheta_{n-1}} \right)^{-\frac{n-2}{n-1}}\right)\right)\\
\leq &\frac{n-2}{n-1} \left(\int_{\Sigma_t} f H d\mu-n(n-1)\kappa^2 \int_{\Omega_t} f d v+(n-1)\epsilon\vartheta_{n-1}\left(s_0^{n-2}-\left(\frac{|\Sigma_t|}{\vartheta_{n-1}} \right)^{\frac{n-2}{n-1}}\right)\right.\\
&\left.+(n-1)q^2\vartheta_{n-1}\left( s_0^{2-n}-\left(\frac{|\Sigma_t|}{\vartheta_{n-1}} \right)^{-\frac{n-2}{n-1}}\right)\right),
\end{aligned}
\end{equation}
which completes the proof of Lemma \ref{Mono}.
\end{proof}
\begin{proof}[Proof of Theorem \ref{MainThm-Minkow}]
By use of Lemma \ref{Limit} and Lemma \ref{Mono}, we have
\begin{equation*}
Q(0)\geq\lim_{t\rightarrow\infty}\inf Q(t)\geq0,
\end{equation*}
which gives the proof of Theorem \ref{MainThm-Minkow}.
\end{proof}
\section{Proof of Theorem \ref{AF-ineq}} \label{Sec:Proof of Theorem 1.3}
In this section, following the arguments in \cite{dLG3,GWWX},  we will use Theorem \ref{MainThm-Minkow} to give the proof of  Theorem \ref{AF-ineq}.

We define
\begin{equation}\label{Jfuncitonal}
\mathcal{J}(\Sigma_t)=n\kappa^2\int_{\Omega_t}  f dv,
\end{equation}
\begin{equation}\label{Kfunctional}
\mathcal{K}(\Sigma_t)=\kappa^2\vartheta_{n-1}
\left(\left(\frac{|\Sigma_t|}{\vartheta_{n-1}} \right)^{\frac{n}{n-1}}-\left(\frac{|\partial P|}{\vartheta_{n-1}} \right)^{\frac{n}{n-1}}\right),
\end{equation}
\begin{equation}\label{Lfunctional}
\begin{aligned}
\mathcal{L}(\Sigma_t)=&|\Sigma_t|^{-\frac{n-2}{n-1}}\left(\int_{\Sigma_t} f H d\mu-(n-1)\mathcal{K}(\Sigma_t) +(n-1)\epsilon\vartheta_{n-1} s_0^{n-2}\right.\\
&\left.+(n-1)q^2\vartheta_{n-1}\left( s_0^{2-n}-\left(\frac{|\Sigma_t|}{\vartheta_{n-1}} \right)^{-\frac{n-2}{n-1}}\right)\right).
\end{aligned}
\end{equation}
By using Lemma \ref{evl-equations} and a Heintze-Karcher inequality \eqref{HK}, we can check that
\begin{equation*}
\frac{d}{dt}\left(\frac{\mathcal{J}(\Sigma_t)-\mathcal{K}(\Sigma_t) }{|\Sigma_t|^{\frac{n-2}{n-1}}} \right) \geq 0.
\end{equation*}
By using Lemma \ref{evl-equations}, \eqref{fH}, \eqref{bnf}, \eqref{fh3} and $\epsilon+\kappa^{2}s_{0}^{2}-2ms_{0}^{2-n}+q^{2}s_{0}^{4-2n}=0$, we have
\begin{equation*}
\begin{aligned}
\frac{d}{dt}&\left(\int_{\Sigma_t} f H d\mu-(n-1)\mathcal{K}(\Sigma_t) +(n-1)\vartheta_{n-1}\epsilon s_0^{n-2}\right.\\
&\left.+(n-1)q^2\vartheta_{n-1}\left( s_0^{2-n}-\left(\frac{|\Sigma_t|}{\vartheta_{n-1}} \right)^{-\frac{n-2}{n-1}}\right)\right)\\
\leq & 2\mathcal{J}(\Sigma_t)-n\mathcal{K}(\Sigma_t)-(n-2)\kappa^2 s_0^n\vartheta_{n-1}+2m(n-2)\vartheta_{n-1}-2(n-2)q^2\int_{\NE} s_t^{2-n}d\mu_{\NE}\\
&+\frac{n-2}{n-1}\int_{\Sigma_t}f H d\mu+(n-2)q^2\vartheta_{n-1}\left(\frac{|\Sigma_t|}{\vartheta_{n-1}} \right)^{-\frac{n-2}{n-1}}\\
\leq & 2\left(\mathcal{J}(\Sigma_t)-\mathcal{K}(\Sigma_t)\right)+\frac{n-2}{n-1}\left(\int_{\Sigma_t} f H d\mu-(n-1)\mathcal{K}(\Sigma_t) +(n-1)\vartheta_{n-1}\epsilon s_0^{n-2}\right.\\
&\left.+(n-1)q^2\vartheta_{n-1}\left( s_0^{2-n}-\left(\frac{|\Sigma_t|}{\vartheta_{n-1}} \right)^{-\frac{n-2}{n-1}}\right)\right)\\
&-(n-2)\vartheta_{n-1}\epsilon s_0^{n-2}-(n-2)q^2\vartheta_{n-1} s_0^{2-n}-(n-2)\kappa^2 s_0^n\vartheta_{n-1}+2m(n-2)\vartheta_{n-1}\\
=&2\left(\mathcal{J}(\Sigma_t)-\mathcal{K}(\Sigma_t)\right)+\frac{n-2}{n-1}|\Sigma_{t}|^{\frac{n-2}{n-1}}\mathcal{L}(\Sigma_{t}),
\end{aligned}
\end{equation*}
which  means that
\begin{equation}\label{Mono-L}
\frac{d}{dt}\mathcal{L}(\Sigma_{t})\leq 2\frac{\mathcal{J}(\Sigma_t)-\mathcal{K}(\Sigma_t) }{|\Sigma_t|^{\frac{n-2}{n-1}}}
\end{equation}

It is  sufficent to prove Theorem \ref{AF-ineq} when the initial surface $\Sigma_0$ satisfies $\mathcal{J}(\Sigma_{0})<\mathcal{K}(\Sigma_{0}),$
otherwise the assertion follows directly from Theorem \ref{MainThm-Minkow}.
In order to prove Theorem \ref{AF-ineq}, we  divide the proof into two cases.

\noindent\textbf{Case 1:}
There exists some $t_0\in(0,\infty)$ such that $\mathcal{J}(\Sigma_{t_0})=\mathcal{K}(\Sigma_{t_0})$ and $\mathcal{J}(\Sigma_{t})-\mathcal{K}(\Sigma_{t})\leq 0$ for $t\in [0,t_0] $.
From \eqref{Mono-L} and \eqref{Minkow0}, we obtain
\begin{equation}\label{}
\begin{aligned}
\mathcal{L}(\Sigma_{0})\geq& \mathcal{L}(\Sigma_{t_0})\\
=&|\Sigma_{t_0}|^{-\frac{n-2}{n-1}}\left(\int_{\Sigma_{t_0}} f H d\mu-(n-1)\mathcal{J}(\Sigma_{t_0}) +(n-1)\epsilon\vartheta_{n-1} s_0^{n-2}\right.\\
&\left.+(n-1)q^2\vartheta_{n-1}\left( s_0^{2-n}-\left(\frac{|\Sigma_{t_0}|}{\vartheta_{n-1}} \right)^{-\frac{n-2}{n-1}}\right)\right)\\
\geq & (n-1)\epsilon \vartheta_{n-1}^{\frac{1}{n-1}}.
\end{aligned}
\end{equation}

\noindent\textbf{Case 2:}
For all $t\in[0,\infty)$, we have
$$
\mathcal{J}(\Sigma_{t})-\mathcal{K}(\Sigma_{t})< 0.
$$
Since $\mathcal{L}(t)$ is monotone non-increasing in $t\in[0,\infty)$ from \eqref{Mono-L}, we obtain
\begin{equation}\label{Mono-0}
\mathcal{L}(\Sigma_{0})\geq\mathcal{L}(\Sigma_{\infty})=\lim_{t\rightarrow\infty}\frac{\int_{\Sigma_{t}}fHd\mu-(n-1)\mathcal{K}(\Sigma_{t})}{|\Sigma_{t}|^{\frac{n-2}{n-1}}}.
\end{equation}
By \eqref{limit-1}, we have
\begin{equation}\label{Mono-1}
	\begin{aligned}
\int_{\Sigma_{t}}fHd\mu-(n-1)\mathcal{K}(\Sigma_{t})=&\int_{\Sigma_{t}}f(H-(n-1)\kappa)d\mu+(n-1)\left(\int_{\Sigma_{t}}\kappa fd\mu-\mathcal{K}(\Sigma_{t})\right)\\
=&\frac{(n-1)\epsilon}{2}\int_{\NE}\lambda^{n-2}d\mu_{\NE}+\frac{n-1}{2}\int_{\NE}\lambda^{n-4}|\hn\lambda|^{2}d\mu_{\NE}\\
&+(n-1)\left(\int_{\Sigma_{t}}\kappa fd\mu-\mathcal{K}(\Sigma_{t})\right)
+O\left(e^{\frac{n-3}{n-1}t}\right).
	\end{aligned}
\end{equation}
By the H?lder inequality, we have
\begin{equation}\label{Holder1}
\left(\frac{|\Sigma_{t}|}{\vartheta_{n-1}}\right)^{\frac{n}{n-1}}\leq\frac{\int_{\NE}\lambda^{n}\upsilon^{\frac{n}{n-1}}d\mu_{\NE}}{\vartheta_{n-1}}.
\end{equation}
By \eqref{C0-est}, Lemma \ref{bound-c1norm2} and \eqref{Holder1}, we obtain
\begin{equation}\label{Mono-2}
\begin{aligned}
(n-1)\left(\int_{\NE}\kappa fd\mu-\mathcal{K}(\Sigma_{t})\right)=&(n-1)\left(\int_{\Sigma_{t}}\kappa f\lambda^{n-1}\upsilon d\mu_{\NE}-\kappa^2\vartheta_{n-1}\left(\frac{|\Sigma_t|}{\vartheta_{n-1}} \right)^{\frac{n}{n-1}}  \right)\\
\geq&(n-1)\int_{\NE}\left(\kappa f \lambda^{n-1}\upsilon-\kappa^{2}\lambda^{n}\upsilon^{\frac{n}{n-1}}\right)d\mu_{\NE}) \\
=&(n-1)\int_{\NE}\left(\frac{\epsilon}{2}\lambda^{n-2}-\frac{1}{2(n-1)}\lambda^{n-4}|\hn\lambda|^{2}\right)d\mu_{\NE}+O(e^{\frac{n-3}{n-1}t}).
\end{aligned}
\end{equation}
This implies
\begin{equation}\label{Mono-3}
\begin{aligned}
\int_{\Sigma_{t}}fHd\mu-(n-1)\mathcal{K}(\Sigma_{t})\geq& (n-1)\epsilon\int_{N}\lambda^{n-2}d\mu_{\NE}+\frac{n-2}{2}\int_{N}\lambda^{n-4}|\hn\lambda|^{2}d\mu_{\NE}+O\left(e^{\frac{n-3}{n-1}t}\right)\\
\geq &(n-1)\epsilon\vartheta_{n-1}^{\frac{1}{n-1}}\left(\int_{N}\lambda^{n-1}d\mu_{N}\right)^{\frac{n-2}{n-1}}+O(e^{\frac{n-3}{n-1}t}),
\end{aligned}
\end{equation}
where the Beckner-Sobolev type inequality \cite{Beckner,BHW} is used in second inequality  when $\epsilon=1$; the inequality is trivial when $\epsilon=0$; it comes from the H\"{o}lder inequality when $\epsilon=-1$.

Finally, by \eqref{Mono-0}, \eqref{Mono-1}, \eqref{Mono-2} and \eqref{Mono-3}, we deduce that
\begin{equation*}
\begin{aligned}
\mathcal{L}(\Sigma_{0})
\geq&(n-1)\vartheta_{n-1}^{\frac{1}{n-1}}\epsilon\lim_{t\rightarrow\infty}\frac{\left(\int_{\NE}\lambda^{n-1}d\mu_{\NE}\right)^{\frac{n-2}{n-1}}}{|\Sigma_{t}|^{\frac{n-2}{n-1}}}\\
=&(n-1)\epsilon\vartheta_{n-1}^{\frac{1}{n-1}},
\end{aligned}
\end{equation*}
which completes the proof of Theorem \ref{AF-ineq}.

\section{Proofs of Theorem 1.5 and Theorem 1.6}\label{Sec:Proofs of Theorem 1.5 and 1.6}

We will compute the mass of the ALH graph over  Reissner-Nordstr\"om-AdS manifold. With the Alexandrov-Fenchel inequality in Theorem \ref{AF-ineq}, we now can  give a Penrose inequality  of ALH mass.

\begin{lem}
The   Reissner-Nordstr\"om-AdS manifold $(P,\barg)$ is a ALH manifold with ALH mass
\begin{equation}\label{mass-1}
m(P,\barg)=m.
\end{equation}
\end{lem}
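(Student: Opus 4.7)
The plan is to verify directly that $(P,\bar g)$ satisfies the ALH conditions of Definition \ref{ALH-mfd} and then compute the mass limit using the explicit form of $e := \bar g - b_{\epsilon}$.

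First I would identify $P$ outside a compact set with $(s_0,\infty)\times N_{\epsilon}$ via the tautological coordinates, so that $\Phi=\mathrm{id}$. Because both $\bar g$ and $b_{\epsilon}$ share the angular part $s^{2}\hat g$, the tensor $e$ has a single nonzero component
$$ e_{ss} \;=\; \frac{1}{f^{2}} - \frac{1}{V_{\epsilon}^{2}} \;=\; \frac{2ms^{2-n}-q^{2}s^{4-2n}}{f^{2}V_{\epsilon}^{2}}, $$
which is $O(s^{-n-2})$ as $s\to\infty$. Since $|\partial_{s}\otimes\partial_{s}|_{b_{\epsilon}} = V_{\epsilon}^{2}\sim s^{2}$, this gives $|e|_{b_{\epsilon}} = O(s^{-n})$; differentiating once more and using $\bar\nabla b_{\epsilon}=0$ yields $|\nabla^{b_{\epsilon}}e|_{b_{\epsilon}} = O(s^{-n})$, so the decay holds with $\tau = n>n/2$. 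The scalar-curvature integrability follows immediately from $R_{\bar g}+n(n-1)\kappa^{2}=(n-1)(n-2)q^{2}s^{2-2n}$ paired with the asymptotic volume element $\sim s^{n-2}\,ds\wedge d\mu_{\hat g}$.

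Next I would evaluate each term in the boundary integrand on the coordinate sphere $N_{s}=\{s\}\times N_{\epsilon}$. The $b_{\epsilon}$-unit outer normal is $\nu=V_{\epsilon}\partial_{s}$ and the induced area form is $s^{n-1}d\mu_{\hat g}$. Because $e$ is radial and diagonal,
$$ \operatorname{tr}^{b_{\epsilon}}e \;=\; V_{\epsilon}^{2}e_{ss} \;=\; \frac{V_{\epsilon}^{2}-f^{2}}{f^{2}} \;=\; \frac{2ms^{2-n}-q^{2}s^{4-2n}}{f^{2}}. $$
Using the standard Christoffel symbols of the warped product $b_{\epsilon}$ I would compute $\operatorname{div}^{b_{\epsilon}}e$ and $d(\operatorname{tr}^{b_{\epsilon}}e)$ in these coordinates; both are purely radial at leading order. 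Contracting with $\nu$ and adding the term $-e(\nabla^{b_{\epsilon}}V_{\epsilon},\nu)$, the asymptotic expansion of the integrand should collapse to $2(n-1)m\,s^{1-n}+o(s^{1-n})$, the $q^{2}$-contribution entering at subleading order $s^{3-2n}$.

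Finally, multiplying by $s^{n-1}d\mu_{\hat g}$ and integrating over $N_{\epsilon}$ produces $2(n-1)m\,\vartheta_{n-1}+o(1)$, so that dividing by $2(n-1)\vartheta_{n-1}$ gives exactly $m(P,\bar g)=m$. The main technical step is the clean bookkeeping in the asymptotic expansion of $\operatorname{div}^{b_{\epsilon}}e$: one has to verify that, after the cancellations between the divergence and trace-derivative pieces, only the $m$-coefficient of $e_{ss}$ survives at the order $s^{1-n}$, while the $q$-dependent terms collapse into the subleading remainder. This is essentially a routine but careful warped-product computation, and it is where I would expect arithmetic slips to occur.
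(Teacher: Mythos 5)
Your proposal is correct and follows essentially the same route as the paper's proof: verify the $O(s^{-n})$ decay and scalar-curvature integrability directly, exploit the fact that $e$ reduces to the single radial component $e_{ss}=f^{-2}-V_\epsilon^{-2}$, and evaluate the mass integrand term by term on coordinate spheres. Your identification of the leading asymptotics (the $(\operatorname{tr}^{b_\epsilon}e)\,dV_\epsilon$ and $e(\nabla^{b_\epsilon}V_\epsilon,\cdot)$ pieces cancel, while $V_\epsilon(\operatorname{div}^{b_\epsilon}e-d\operatorname{tr}^{b_\epsilon}e)(\hat\nu)\sim 2(n-1)m\,s^{1-n}$ with the $q^2$-term subleading) matches the bookkeeping behind the paper's displayed computation.
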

\begin{proof}
Let $\{x^{a}\}_{a=1}^{n-1}$ be a local coordinate system on $\NE$ and $x^{0}=s$. According to \eqref{ALH-metric}, one can check that the Christoffel symbols with respect to the metric $b_{\epsilon}$ are
\begin{equation}
\begin{cases}
&\Gamma_{00}^{0}=-\frac{\kappa^{2}s}{\kappa^{2}s^{2}+\epsilon},\\
&\Gamma_{ab}^{0}=-sV^2_{\epsilon}\hg_{ab},\qquad a,b>0,\\
&\Gamma_{a0}^{b}=s^{-1}\delta_{a}^{b},\qquad a,b>0,\\
&\Gamma_{a0}^{0}=\Gamma_{00}^{a}=0,\qquad a>0.
\end{cases}
\end{equation}
Since $d\mu=s^{n-1}d\mu_{\NE}, e=(\frac{ds^{2}}{f^{2}}-\frac{ds^{2}}{V_{\epsilon}^{2}})$, one can compute directly
\begin{equation}
\begin{aligned}
m(M,g)=&\frac{1}{2(n-1)\vartheta_{n-1}}\lim_{s\rightarrow\infty}\int_{N_{s}}\left(V_{\epsilon}(div^{b_{\epsilon}}e-dtr^{b_{\epsilon}}e)+(tr^{b_{\epsilon}}e)dV_{\epsilon}-e(\n^{b_{\epsilon}}V_{\epsilon},\cdot)\right)\nu d\mu_{b_{\epsilon}}\\
=&\frac{1}{2(n-1)\vartheta_{n-1}}\int_{N_{\epsilon}}\left(2m(n-1)+\frac{2m}{\kappa}-\frac{2m}{\kappa}\right)d\mu_{\NE}\\
=&m.
\end{aligned}
\end{equation}
and
\begin{equation}\label{ALHG}
||(\Phi^{-1})*\barg-b_{\epsilon}||_{b_{\epsilon}}+||\n^{b_{\epsilon}}((\Phi^{-1})*\barg)||_{b_{\epsilon}}=O(s^{-n})
\end{equation}
By using the static equation \eqref{Static-Equ}, we have
\begin{equation*}
R_{g}=-n(n-1)\kappa^{2}+q^{2}(n-2)(n-1)s^{2-2n}
\end{equation*}
which implies
\begin{equation*}
\begin{aligned}
\int_{P}V_{\epsilon}|R_{g}+ n(n-1)\kappa^{2}|d\mu_{M}
=q^{2}(n-2)(n-1)\int_{s_0}^{\infty}\int_{\NE}V_{\epsilon}s^{2-2n}V_{\epsilon}s^{n-1}dsd\mu_{\NE}
<\infty.
\end{aligned}
\end{equation*}
Hence $(P,\barg)$ is a ALH manifold.
\end{proof}
\begin{lem}
We have
\begin{equation}
m(M,g)=m+\frac{1}{2(n-1)\vartheta_{n-1}}\lim_{s\rightarrow\infty}\int_{N_{s}}\left(f(div^{\barg}\hat{e}-{\rm d}\ tr^{\barg}\hat{e})+(tr^{\barg}\hat{e}){\rm d}f-\hat{e}(\bn f,\cdot)\right)\hat{\nu}d\mu
\end{equation}
where $\hat{e}=(\Phi^{-1})*g-\barg$, $\hat{\nu}$ is the outward unit normal of $N_{s}$ induced by $\barg$ and $d\mu$ is the area element induced by $\barg$.
\end{lem}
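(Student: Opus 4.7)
The plan is to exploit the linearity of the ALH mass functional in the deviation tensor and to reduce the problem to the previous lemma. First I decompose
\begin{equation*}
e := (\Phi^{-1})^{*}g - b_\epsilon = \hat{e} + E, \qquad \hat{e} = (\Phi^{-1})^{*}g - \barg,\qquad E = \barg - b_\epsilon.
\end{equation*}
Let $I_\epsilon(T)$ denote the boundary integral appearing in Definition \ref{ALH-mfd}, regarded as a linear functional of a symmetric $2$-tensor $T$. Linearity gives $m(M,g) = I_\epsilon(\hat{e}) + I_\epsilon(E)$.

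For the $E$-piece, I would apply the mass formula to the ALH manifold $(P,\barg)$ with the same reference $b_\epsilon$: here the deviation tensor is exactly $E=\barg - b_\epsilon$, and the preceding lemma (the explicit Christoffel calculation) shows that this evaluates to $m(P,\barg)=m$. Hence $I_\epsilon(E)=m$.

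For the $\hat{e}$-piece, the goal is to show
\begin{equation*}
I_\epsilon(\hat{e}) = \frac{1}{2(n-1)\vartheta_{n-1}}\lim_{s\to\infty}\int_{N_s}\left(f(div^{\barg}\hat{e} - d\,tr^{\barg}\hat{e}) + (tr^{\barg}\hat{e})df - \hat{e}(\bn f,\cdot)\right)\hat{\nu}\,d\mu,
\end{equation*}
by upgrading the reference from $b_\epsilon$ to $\barg$ inside the flux integral. From \eqref{ALHG} one has $|E|_{b_\epsilon}+|\nabla^{b_\epsilon}E|_{b_\epsilon}=O(s^{-n})$; Taylor expansion of the radial profiles $f=\sqrt{\epsilon+\kappa^2 s^2-2ms^{2-n}+q^2 s^{4-2n}}$ and $V_\epsilon=\sqrt{\epsilon+\kappa^2 s^2}$ gives $|f-V_\epsilon|=O(s^{1-n})$ and $|df-dV_\epsilon|=O(s^{-n})$; and the outward unit normals and induced area elements of $N_s$ agree up to $O(s^{-n})$. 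Combined with $|\hat{e}|_{\barg}+|\bn\hat{e}|_{\barg}=O(s^{-\tau})$ for some $\tau>n/2$ (from Definition \ref{ALH graph metric}) and $|N_s|=O(s^{n-1})$, every cross-term produced by the simultaneous substitutions $V_\epsilon\to f$, $\nabla^{b_\epsilon}\to\bn$, $tr^{b_\epsilon}\to tr^{\barg}$, $div^{b_\epsilon}\to div^{\barg}$, $\nu\to\hat{\nu}$, and $d\mu_{b_\epsilon}\to d\mu_{\barg}$ is pointwise bounded by a constant times $s^{-n-\tau}$, so its integral over $N_s$ is $O(s^{-\tau-1})$ and vanishes as $s\to\infty$.

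The main obstacle is the bookkeeping in the last step: there are six distinct substitutions to make inside the integral, each producing its own cross-term, and one must verify that no single cross-term degrades the decay rate below $o(1)$. The ALH fall-off $\tau>n/2$ is exactly what is needed here: it allows the $\hat{e}$-contributions to survive the passage to the limit while killing all error terms generated by replacing $b_\epsilon$ with $\barg$, leaving precisely the claimed $\barg$-expression added to the background mass $m$.
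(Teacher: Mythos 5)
Your proof is correct and follows essentially the same route as the paper's: split $e=\hat e+(\barg-b_\epsilon)$, use linearity of the flux functional together with the preceding lemma to absorb the $(\barg-b_\epsilon)$ piece into $m(P,\barg)=m$, and then pass from the $(V_\epsilon,b_\epsilon,\nu,d\mu_{b_\epsilon})$ reference to the $(f,\barg,\hat\nu,d\mu)$ reference inside the remaining $\hat e$ flux integral. The paper leaves the last substitution step at the level of a one-line remark citing \eqref{ALHG}, whereas you supply the decay bookkeeping explicitly; your power counting for the cross-terms may be off by one factor of $s$ depending on whether one tracks $|\bn\hat e|_{\barg}=O(s^{-\tau})$ in frame norm or in coordinate components, but either way each cross-term integrates to something that vanishes as $s\to\infty$, so the argument goes through.
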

\begin{proof}
Since $e=\hat{e}+\barg-b_{\epsilon}$,
we have
$$
\begin{aligned}
m(M,g)&=m(P,\barg)+\frac{1}{2(n-1)\vartheta_{n-1}}\lim_{s\rightarrow\infty}\int_{N_{s}}\left(V_{\epsilon}(div^{b_{\epsilon}}\hat{e}-dtr^{b_{\epsilon}}\hat{e})+(tr^{b_{\epsilon}}\hat{e})dV_{\epsilon}-\hat{e}(\n^{b_{\epsilon}}V_{\epsilon},\cdot)\right)\nu d\mu_{b_{\epsilon}}\\
&=m+\frac{1}{2(n-1)\vartheta_{n-1}}\lim_{s\rightarrow\infty}\int_{N_{s}}\left(V_{\epsilon}(div^{b_{\epsilon}}\hat{e}-dtr^{b_{\epsilon}}\hat{e})+(tr^{b_{\epsilon}}\hat{e})dV_{\epsilon}-\hat{e}(\n^{b_{\epsilon}}V_{\epsilon},\cdot)\right)\nu d\mu_{b_{\epsilon}}.
\end{aligned}
$$
Recall that the  Reissner-Nordstr\"om-AdS manifold is ALH, we have \eqref{ALHG}. 
Then one can replace $V_{\epsilon},b_{\epsilon},\nu, d\mu_{b_{\epsilon}}$ by,  $f,\barg,\hat{\nu}, d\mu$, respectively, will not change the value of the limit, i.e.
$$
m(M,g)=m+\frac{1}{2(n-1)\vartheta_{n-1}}\lim_{s\rightarrow\infty}\int_{N_{s}}\left(f(div^{\barg}\hat{e}-dtr^{\barg}\hat{e})+(tr^{\barg}\hat{e})df-\hat{e}(\bn f,\cdot)\right)\hat{\nu} d\mu.
$$
\end{proof}
\begin{proof}[\textbf{Proof of Theorem \ref{Penrose0}}] The proof of this theorem follows in the spirit of the one in \cite{dLG2,GWWX}.
	Denote the outward unit normal of $(M,g)\subset (Q,\tg)$ by $\xi$ and let $B=-\bn\xi$ be the second fundamental form of $M$. The Newton tensor is inductively given  by
	$$
	T_{r}=S_{r}I-BT_{r-1},\ T_{0}=I,
	$$
	where $S_{r}$ is the $r^{th}$ elementary symmetric polynomial of principle curvature  of $M$ with respect to the unit normal $\xi$.
	
	Let $\{x_{0},...,x_{n-1}\}$  be a local coordinate system of $P$. The induced metric $g_{ij}$ and the unit normal vector field $\xi$ of $M$, respectively, are given by
	\begin{align}
	g_{ij}=&f^{2}u_{i}u_{j}+\barg_{ij},\label{metric}\\
	\xi=&\frac{1}{\sqrt{1+f^{2}|\bn u|^{2}}}\left(f^{-1}\frac{\partial }{\partial t}-f\bn u\right),\label{normal-M}
	\end{align}
	where $\bn$ is the covariant derivative with respect to $\barg$, $u_k=\bn_k u, u^{i}=\barg^{ik}u_k$ and $|\bn u|^{2}=u_{i}u_{j}\barg^{ij}$.
	The  components of second foundamental form $B_{ij}$ is given by
	\begin{equation}\label{sff-M}
	B_{ij}=\frac{f}{\sqrt{1+f^{2}|\bn u|^{2}}}\left(\bn_{i}\bn_{j}u+\frac{u_{i}f_{j}+u_{j}f_{i}}{f}+f\left\langle\bn u,\bn f\right\rangle_{\barg}u_{i}u_{j}\right).
	\end{equation}
	From \eqref{normal-M}, we deduce that the tangential part of $\frac{\partial}{\partial t}$ is
	\begin{equation}
	\begin{aligned}
	\left(\frac{\partial}{\partial t}\right)^{T}=&\frac{f^{2}\bn u}{1+f^{2}|\bn u|^{2}}+\frac{f^{2}|\bn u|^{2}}{1+f^{2}|\bn u|^{2}}\partial_t\\
	=&\frac{f^{2}\bn^{i}u}{1+f^{2}|\bn u|^{2}}\left(\bn_iu\partial_t+\partial_i\right)
	\end{aligned}
	\end{equation}
	From \eqref{metric}and  \eqref{sff-M}, we get
	\begin{equation}\label{jifen}
	\begin{aligned}
	&\left[f(div^{\barg}\hat{e}-{\rm d}\ tr^{\barg}\hat{e})+(tr^{\barg}\hat{e}){\rm d}f-\hat{e}(\bn f,\cdot)\right]_{i}\\
	&=f^{2}\left\langle \bn u,\bn f\right\rangle_{\barg}u_{i}-f^{3}\bn^{k}\bn_{i}u\bn_{k}u+f^{3}\bar{\Delta} u\bn_{i}u-f^{2}|\bn u|^{2}f_{i}\\
	&=(B_{ml}\barg^{ml}\barg_{ik}-B_{ik})\barg^{kj}u_{j}\cdot f^{2}\sqrt{1+f^{2}|\bn u|^{2}}
	\end{aligned}
	\end{equation}
	Note that since $M$ is ALH, by Definition \ref{ALH graph metric}, we have for some $\tau>\frac{n}{2}$
	\begin{equation*}
	f^{2}|\bn u|^{2}=O(s^{-2\tau}),\ g_{ij}=\barg_{ij}+O(s^{-2\tau}),\ g^{ij}=\barg^{ij}+O(s^{-2\tau-2})
	\end{equation*}
	Since $d\mu=s^{n-1}d\mu_{N_{\epsilon}}$, we have
	\begin{equation*}
	d\mu_{N_{s}}=\sqrt{det\left(s^{2}\hg_{ab}+f^{2}u_{a}u_{b}\right)}dx^{1}\cdots dx^{n-1}=d\mu(1+O(s^{-2\tau-2}))
	\end{equation*}
	Because the tangent space of $\text{graph}|_{N_{s}}u$ is spanned by $e_{a}:=u_{a}\partial_t+\partial_{a},a=1,\cdots,n-1$. The normal vector of  $\text{graph}|_{N_{s}}u$ can be set as $e_{0}^{\perp}=e_{0}-t_{a}e_{a}$, where $e_{0}=u_{s}\partial_t +\partial_{s}$. Then from $g(e_{a},e_{0}^{\perp})=0$, we have $t_{a}=O(s^{-2\tau-2})$. Note that $|e_{a}|=O(s)$, $|e_{0}|=O(s^{-1})$, $f\partial_{s}=\hat{\nu}$, so we can see  $$\nu_{N_{s}}=\frac{e_{0}^{\perp}}{|e_{0}^{\perp}|}=\frac{O(s^{-\tau-2})\partial_t+\partial_{s}+O(s^{-2\tau-2})\Sigma\partial_{a}}{\sqrt{O(s^{-2\tau-2})+\frac{1}{f^{2}}+O(s^{-4\tau-2})}}=\hat{\nu}+O(s^{-2\tau}).$$
	Hence from \eqref{jifen} and above,
	when $s\rightarrow\infty$,
	\begin{equation}
	\begin{aligned}
	&\lim_{s\rightarrow\infty}\int_{N_{s}}\left(f(div^{\barg}\hat{e}-{\rm d}\ tr^{\barg}\hat{e})+(tr^{\barg}\hat{e}){\rm d}f-\hat{e}(\bn f,\cdot)\right)\hat{\nu} d\mu\\
	=&\lim_{s\rightarrow\infty}\int_{N_{s}}(B_{ml}g^{ml}\delta_{i}^{j}-B_{i}^{j})\frac{f^{2}\bn^{i}u}{1+f^{2}|\bn u|^{2}}\left(\nu_{N_{s}}\right)_jd\mu_{g}\\
	=&\lim_{s\rightarrow\infty}\int_{N_{s}}\left\langle T_{1}\left(\frac{\partial}{\partial t}\right)^{T},\nu_{N_{s}}\right\rangle_gd\mu_{g}.
	\end{aligned}
	\end{equation}
	By integration by parts , we get (see \cite{GWW4})
	\begin{equation}\label{integrate by parts}
	\lim_{s\rightarrow\infty}\int_{N_{s}}\left<T_{1}\left(\frac{\partial}{\partial t}\right)^{T},\nu_{N_{s}}\right>_{g}d\mu_{g}=\int_{M}div_{g}\left(T_{1}\left(\frac{\partial}{\partial t}\right)^{T}\right)d\mu_{M}+\int_{\Sigma}\left<T_{1}\left(\frac{\partial}{\partial t}\right)^{T},\nu_{\Sigma}\right>_{g}d\mu_{\Sigma}
	\end{equation}
	where $\nu_{\Sigma}, d\mu_{\Sigma}$ are the unit outer normal and area element of $\text{graph}|_{\Sigma}u$ induced by metric $g$, respectively.
	
	Using the fact that $\frac{\partial}{\partial t}$ is a Killing vector field, one can check (refer \cite{ALM} for the proof)
	\begin{equation}\label{div1}
	div_{g}\left(T_{1}\left(\frac{\partial}{\partial t}\right)^{T}\right)=\left\langle div_{g}T_{1},\left(\frac{\partial}{\partial t}\right)^{T}\right\rangle_{g}+2S_{2}\left\langle\frac{\partial}{\partial t},\xi\right\rangle.
	\end{equation}
	A direct computation (see \cite{ALM}) gives
	\begin{equation}
	div_{g}(T_{r})=-Bdiv_{g}(T_{r-1})-\sum_{i}\left(R_{\tg}(\xi,T_{r-1}(\partial_{i}))\partial_i\right)^{T}
	\end{equation}
	where $\{\partial_{i}\}$ is a tangent frame of $M$, $R_{\tg}(\cdot,\cdot)$ is the Riemannian curvature tensor of $(Q,\tg)$. In particular, for  $r=1$ we have
	\begin{equation}\label{div2}
	\left\langle div_{g}T_{1},\left(\frac{\partial}{\partial t}\right)^{T}\right\rangle_{g}=\Ric_{\tg}(\xi,\left(\frac{\partial}{\partial t}\right)^{T}).
	\end{equation}
From the Gauss equation, we have
\begin{equation}
2S_{2}=R_{g}-R_{\tg}+2Ric_{\tg}(\xi,\xi)
\end{equation}
where $	R_{\tg}=-n(n+1)\kappa^{2}-(n-2)(n-3)q^{2}\lambda^{2-2n}$ is the scalar curvature of the manifold $(Q,\tg)$.
After a change of variable as in \eqref{metric-RN}, the metric $\tg$ has  the following form $$\tg=f^{2}dt^{2}+dr^{2}+\lambda^{2}\hg,\ f=\lambda'.$$
Now we have
\begin{equation}\label{Ricci1}
Ric_{\tg}(\xi,\xi)=-nk^{2}+\frac{(n-2)q^{2}\lambda^{2-2n}}{1+f^{2}|\bn u|^{2}}\left(-(n-2)(1+f^{2}u_{r}^{2})+f^{2}(|\bn u|^{2}-u_{r}^{2})\right),
\end{equation}
and
\begin{equation}\label{Ricci2}
Ric_{\tg}(\xi,\left(\frac{\partial}{\partial t}\right)^{T})=-\left\langle\frac{\partial}{\partial t},\xi\right\rangle \left(Ric_{\tg}(\xi,\xi)+n\kappa^{2}+(n-2)^{2}q^{2}\lambda^{2-2n}\right).
\end{equation}
Combining   \eqref{div1}, \eqref{div2}, \eqref{Ricci1} and \eqref{Ricci2},  we obtain
\begin{equation}\label{div}
\begin{aligned}
div_{g}\left(T_{1}\left(\frac{\partial}{\partial t}\right)^{T}\right)
=&2S_{2}\left\langle\frac{\partial}{\partial t},\xi\right\rangle +Ric_{\tg}\left(\xi,\left(\frac{\partial}{\partial t}\right)^{T}\right)\\
=&\left\langle\frac{\partial}{\partial t},\xi\right\rangle\left(R_{g}+n(n-1)\kappa^{2}-(n-1)(n-2)q^{2}\lambda^{2-2n}\frac{1+f^{2}u_{r}^{2}}{1+f^{2}|\bn u|^{2}}\right)\\
\geq&\left\langle\frac{\partial}{\partial t},\xi\right\rangle\left(R_{g}-R_{\barg}\right),
\end{aligned}
\end{equation}
where $R_{\barg}=-n(n-1)\kappa^{2}+(n-1)(n-2)q^{2}\lambda^{2-2n}$ is the scalar curvature of the  Reissner-Nordstr\"om-AdS manifold.

To caculate the integration on $\Sigma$, we use the assumption that $\Sigma$ is in a level set of $u$ and $|\bn u|\rightarrow\infty$ as $x\rightarrow\Sigma$. Let $\{y^{\alpha}\}_{\alpha=0}^{n-1}$ be a new coordinate system in a neighborhood of $\text{graph}|_{\Sigma}u$ in $M$ , such that $\frac{\partial}{\partial y^{0}}$ is the unit outer normal direction of $\Sigma$ and $\{\frac{\partial}{\partial y^{1}},\cdots,\frac{\partial}{\partial y^{n-1}}\}$ span the tangent space of $\Sigma$. Then on $\Sigma$
$$\partial_{a}u=0,\ |\partial_{0}u|=|\bn u|=+\infty,\ 1\leq a\leq n-1$$
We set $e_{a}=u_{a}\partial_t+\frac{\partial}{\partial y^{a}}=\frac{\partial}{\partial y^{a}},\ e_{0}=u_{0}\partial_t+\frac{\partial}{\partial y^{0}}$ be the corresponding coordinate frame on $M$. Then
\begin{equation*}
\nu_{\Sigma}=\frac{e_{0}}{|e_{0}|},\ |e_{0}|=\sqrt{f^{2}|u_{0}|^{2}+1},\ d\mu_{\Sigma}=d\mu,\ g_{ab}=\barg_{ab}
\end{equation*}
where $d\mu$ is the area element of $\Sigma$ with respect to $\barg$.\\
From \eqref{sff-M}, we get on $\Sigma$,
\begin{equation}\label{graph Sigma II}
B_{ab}=\frac{fu_{0}}{\sqrt{1+f^{2}|\bn u|^{2}}}h_{ab}^{\Sigma},\qquad 0<a,b<n.
\end{equation}
where $\Gamma_{\alpha\beta}^{0}$ is the Christoffel symbols of $(P,\barg)$ with respect to  the coordinate $\{y^{\alpha}\}_{\alpha=0}^{n-1}$ and  $h_{ab}^{\Sigma}$ is the second fundamental form of $\Sigma$ in $(P,\barg)$. From \eqref{graph Sigma II} and $\lim_{x\rightarrow \Sigma}\frac{f|\bn u|}{\sqrt{1+f^{2}|\bn u|^{2}}}=1$, we infer
\begin{equation}
\begin{aligned}
\left\langle T_{1}\left(\frac{\partial}{\partial t}\right)^{T},\nu_{\Sigma}\right\rangle_{g}
=&\left(trB-B_{0}^{0}\right)\left\langle\frac{f^{2}u_{0}}{1+f^{2}|\bn u|^{2}} e_{0},\frac{e_{0}}{|e_{0}|}\right\rangle\\
=&\frac{fu_{0}}{\sqrt{1+f^{2}|\bn u|^{2}}}H\frac{f^{2}u_{0}}{1+f^{2}|\bn u|^{2}}|e_{0}|\\
=&Hf,
\end{aligned}
\end{equation}
which yields
\begin{equation}\label{integrate on Sigma}
\int_{\Sigma}\left\langle T_{1}\left(\frac{\partial}{\partial t}\right)^{T},\nu_{\Sigma}\right\rangle_{g}d\mu_{\Sigma}=\int_{\Sigma}Hfd\mu.
\end{equation}
Finally, combining \eqref{integrate by parts}, \eqref{div} and  \eqref{integrate on Sigma} together, we get
\begin{equation}
m(M,g)\geq m+c_n\left(\int_{M}\left(R_{g}-R_{\barg}\right)\left\langle\frac{\partial}{\partial t},\xi\right\rangle d\mu_{M}+\int_{\Sigma}Hfd\mu\right).
\end{equation}
From \eqref{div} we know equality holds if and only if $u_{0}^{2}=|\bn u|^{2}$, which means that $M$ is rotationaly symmetric.
\end{proof}

Finally, we can prove the Penrose type inequality for ALH graphs.
\begin{proof}[\textbf{Proof of Theorem \ref{Penrose}}]
	It's easy to check that on the horizon $\partial P$,
	\begin{equation}\label{horizon mass}
	2m=\epsilon\left(\frac{|\partial P|}{\vartheta_{n-1}}\right)^{\frac{n-2}{n-1}}+\kappa^{2}\left(\frac{|\partial P|}{\vartheta_{n-1}}\right)^{\frac{n}{n-1}}+q^{2}\left(\frac{|\partial P|}{\vartheta_{n-1}}\right)^{\frac{-n+2}{n-1}}.
	\end{equation}
	Combining \eqref{horizon mass}, \eqref{Penrose0-ineq1} with Theorem \ref{AF-ineq}, we obtain the Penrose type inequality \eqref{Penrose-ineq1}.
\end{proof}

\noindent {\it Acknowledgement:} The  authors wish to thank Dr. Yong Wei for some valuable discussions and helpful comments.

\providecommand{\bysame}{\leavevmode\hbox
	to3em{\hrulefill}\thinspace}

\begin{flushleft}
	Daguang Chen,
	E-mail: dgchen@math.tsinghua.edu.cn\\
	
	Haizhong Li,
	E-mail:	hli@math.tsinghua.edu.cn\\
	
	Tailong Zhou,
	E-mail:	zhou-tl14@mails.tsinghua.edu.cn\\

	Department of Mathematical Sciences, Tsinghua University, Beijing, 100084, P.R.China \\	
	
\end{flushleft}


\begin{thebibliography}{99}
	
	\bibitem{ALM} L. J. Al\'ias, J. H. S. de Lira and J. M. Malacarne, {\it Constant higher-order mean curvature
		hypersurfaces in Riemannian spaces,} J. Inst. Math. Jussieu, 5(2006), no. 4, 527-562.
	
	\bibitem{Beckner} W. Beckner, {\it Sharp Sobolev inequalities on the sphere and the Moser-Trudinger inequality,} Ann. of Math. 138(1993), 213-242.
	
	\bibitem{Bray01}
	H. L. Bray, \emph{Proof of the Riemannian Penrose inequality using the positive mass theorem}, J. Diff. Geom. 59 (2001), no. 2, 177--267.
	
	\bibitem{Bray} H. L. Bray, \emph{On the positive mass, Penrose, an ZAS inequalities in general dimension}, Surveys in Geometric Analysis and Relativity, Adv. Lect. Math. (ALM), \textbf{20}, Int. Press, Somerville, MA, (2011).
	
	
	
	\bibitem{BL}
	H. L. Bray and D. A. Lee, \emph{On the Riemannian Penrose inequality in dimensions less than eight}, Duke Math. J. 148 (2009), no. 1, 81--106.
	
	\bibitem{Brendle} S. Brendle,  {\it Constant mean curvature surfaces in warped product manifolds}, Publications Math\'{e}matiques de l'IH\'{E}S 117(2013), 247--269.
	
	\bibitem{BHW} S. Brendle, P.-K. Hung, and M.-T. Wang, {\it A Minkowski-type inequality for hypersurfaces in the Anti-deSitter-Schwarzschild manifold}, Comm. Pure Appl. Math. 69(2016), no.1, 124-144.
	
	\bibitem{CH}  P. Chru\'sciel and M.  Herzlich, {\it The mass of asymptotically hyperbolic Riemannian manifolds,}  Pacific J. Math.  212  (2003),  no. 2, 231--264.
	
	\bibitem{CN}  P. Chru\'sciel and G. Nagy, {\it The mass of spacelike hypersurface in asymptotically anti-de Sitter space-times}, Adv. Theor. Math. Phys.  5 (2002), 697-754.
	

	
	\bibitem{DGS}  M. Dahl, R. Gicquaud and A. Sakovich, {\it Penrose type inequalities for asymptotically hyperbolic graphs,}  Annales Henri Poincar\'e 14 (2013), 1135–1168.
	
	\bibitem{Di} Q. Ding, {\it The inverse mean curvature flow in rotationally symmetric spaces}, Chin. Ann. Math. Ser. B, 32 (2011), no. 1, 27--44.
		
	\bibitem{dLG2}
	L. L. de Lima and F. Gir{\~a}o, \emph{Positive mass and Penrose type inequalities for asymptotically hyperbolic hypersurfaces},  Gen. Relativity Gravitation 47 (2015), no. 3, Art. 23, 20 pp.
	
	\bibitem{dLG3} L. L. de Lima and F. Gir\~ao, {\it An Alexandrov-Fenchel-type inequality in hyperbolic space with an application to a Penrose inequality},
	 Ann. Henri Poincarè 17(2016), no. 4, 979–1002 .
	
	
	\bibitem{Ge-2006}C. Gerhardt, Curvature Problems, Ser. in Geom. and Topol., Vol. 39, International Press, Somerville, MA, (2006).
	
	\bibitem{Gerhardt} C. Gerhardt, {\it Inverse curvature flows in hyperbolic space}, J. Differential Geom. 89 (2011), no. 3, 487-527.
	
	\bibitem{GWW1} Y. Ge, G. Wang and J. Wu, {\it A new mass for asymptotically flat manifolds,}  Adv. Math. 266 (2014), 84–119.
	
	
	\bibitem{GWW3} Y. Ge, G. Wang and J. Wu, {\it Hyperbolic Alexandrov-Fenchel quermassintegral inequalities II}, J. Diff. Geom. 98 (2014), no.~2, 237--260.
	
	\bibitem{GWWX}Y. Ge, G. Wang, J. Wu and C. Xia,\emph{ A Penrose inequality for graphs over Kottler space}, Calc. Var. (2015) 52:755--782.
	
	\bibitem{GWW4} Y. Ge, G. Wang and J. Wu, {\it The GBC mass for asymptotically hyperbolic manifolds,}  Math. Z. 281 (2015), no. 1-2, 257–297.

\bibitem{GL}P. Guan and J. Li, {\it The quermassintegral inequalities for k -convex starshaped domains,} Adv. Math.  221  (2009),  no. 5, 1725–1732.
	

	
	\bibitem{Herzlich1} M. Herzlich, {\it A Penrose-like inequality for the mass of Riemannian asymptotically flat manifolds,} Commun. Math. Phys. 188 (1997), 121-133.
	
	\bibitem{Herzlich} M. Herzlich, {\it Mass formulae for asymptotically hyperbolic manifolds},
	AdS/CFT correspondence: Einstein metrics and their conformal
	boundaries, 103-121, Eur. Math. Soc., Z\"{u}rich, (2005).
	
	\bibitem{HI}G. Huisken and T. Ilmanen, {\it The inverse mean curvature flow and the Riemannian Penrose inequality,} J. Diff. Geom. 59 (2001),353--437.
	
\bibitem{HI08} G. Huisken and T. Ilmanen, {\it Higher regularity of the inverse mean curvature flow}, J. Diff. Geom. 80 (2008), 433-451.	
	
	
	\bibitem{HuangWu}L.-H. Huang and D. Wu,
	{\it The equality case of the Penrose inequality for asymptotically flat graphs},  Trans. Amer. Math. Soc. 367 (2015), no. 1, 31–47.
	
\bibitem{KI} H. Kodama and A. Ishibashi, {\it Master equations for perturbations of generalised static black holes with charge in higher dimensions,} Prog. Theor. Phys. 111 (2004), no. 1, 29-73
	
	\bibitem{Lam} M.-K. G. Lam, {\it The graph cases of the Riemannian positive mass and Penrose inequality in all dimensions}, Thesis (Ph.D.), Duke University, 2011, 88 pp.

	\bibitem{LW} H. Li, Y. Wei, {\it On inverse mean curvature flow in Schwarzschild space and Kottler space}, Calc. Var. (2017) 56:62.
		
	\bibitem{LWX} H. Li, Y. Wei and C. Xiong, {\it A geometric ineqality on hypersurface in hyperbolic space},  Adv. Math. 253(2014), 152--162.
	
	\bibitem{LWX2} H. Li, Y. Wei and C. Xiong,
	{\it The Gauss-Bonnet-Chern mass for graphic manifolds,} Ann.
	Global Anal. Geom. 45(2014),no.4, 251-266.
	
	\bibitem{LN} D. Lee and A. Neves, {\it The Penrose inequality for asymptotically locally hyperbolic spaces with nonpositive mass}, Comm. Math. Phys. 339 (2015), no. 2, 327–352.
	
	\bibitem{Mars} M. Mars, {\it Topical  review: Present status of the Penrose inequality,} Classical and Quantum Gravity, \textbf{26} (2009), no.19, 193001, 59pp.
	
	\bibitem{MV}  H. Mirandola and F. Vit\'orio,
	{\it The positive mass Theorem and Penrose inequality for graphical manifolds,}
 Comm. Anal. Geom. 23 (2015), no. 2, 273–292.
	
	\bibitem{M} B. Michel, {\it Geometric invariance of mass-like asymptotic invariants,} J. Math. Phys. \textbf{52} (2011), no. 5, 052504, 14 pp.
	
	\bibitem{Neves} A. Neves, {\it Insufficcient convergence of inverse mean curvature flow on asymptotically hyperbolic manifolds,} J. Diff. Geom. 84 (2010), no.1, 711-748.

	
	\bibitem{SY}R. Schoen and S.-T. Yau, {\it On the proof of the positive mass conjecture in general relativity,} Commun. Math. Phys. 65 (1979),  45-76.
	
	 \bibitem{SY2017} R. Schoen and S.-T. Yau, {\it Positive scalar curvature and minimal hypersurface singularities},
	arXiv:1704.05490 (2017).
	
    \bibitem{WangZH} Z.~H. Wang, {\it
    A Minkowski-type inequality for hypersurfaces in the  Reissner-Nordstr\"om-anti-deSitter manifold},
    Dissertations, Columbia University, 2015.
		
	\bibitem{Wang} X. Wang, {\it Mass for asymptotically hyperbolic manifolds,} J. Diff. Geom. \textbf{57} (2001), 273-299.
	
	\bibitem{WX} G. Wang and C. Xia, {\it  Isoperimetric type problems and
		Alexandrov-Fenchel type inequalities in  the hyperbolic space,} Adv. Math. 259(2014), 532-556.

	
	\bibitem{Wei} Y. Wei, {\it On the Minkowski-type inequality for outward minimizing hypersurfaces in Scharzschild space,} Arxiv:1701.04964v1(2017).
	
	
	\bibitem{W} E. Witten, {\it A new proof of the positive energy theorem}, Commun. Math. Phys. 80 (1981), 381-402.
	
		
\bibitem{ZhangX} X. Zhang, {\it  A definition of total energy-momenta and the positive mass theorem on asymptotically hyperbolic 3-manifolds. I.}
Comm. Math. Phys.  \textbf{ 249 } (2004),  no. 3, 529--548.	
	
\bibitem{Z} H.-Y. Zhou, {\it Inverse Mean Curvature Flows in Warped Product Manifolds,} J. Geom. Anal, (2016), no. 1, 1-24.	


	
	
	

	


	

	
\end{thebibliography}
\end{document}